    \def\ps@copyright{\ps@empty
    \def\@oddfoot{\hfil\small\copyright 2012, \SMF}}
\newcommand{\SMF}{Soci\'et\'e Ma\-th\'e\-Ma\-ti\-que de France}
\newcommand{\BibTeX}{{\scshape Bib}\kern-.08em\TeX}
\newcommand{\T}{\S\kern .15em\relax }
\newcommand{\AMS}{$\mathcal{A}$\kern-.1667em\lower.5ex\hbox
        {$\mathcal{M}$}\kern-.125em$\mathcal{S}$}
    \let\markboth\org@markboth
\renewcommand{\k}{{\mathbf k}}
\DeclareMathOperator{\im}{im}
\DeclareMathOperator{\hull}{hull}
\DeclareMathOperator{\diag}{diag}
\DeclareMathOperator{\Diag}{Diag}
\DeclareMathOperator{\rank}{rank}
\DeclareMathOperator{\hgt}{ht}
\newcommand{\Z}{\ZZ}
\newcommand{\Q}{\QQ}
\newcommand{\F}{\FF}
\newcommand{\m}{\mathfrak m}
\newcommand{\GD}{{$G\Theta$}}
\newcommand{\AG}{{$AG$}}
\newcommand{\CG}{{$CG$}}
\newcommand{\AGD}{{$AG\Theta$}}
\newcommand{\CGD}{{$CG\Theta$}}
\newcommand{\CGL}{{$CG\Lambda$}}
\newcommand{\cO}{\mathcal O}
\newcommand{\cM}{\mathcal M}
\newcommand{\cS}{\mathcal S}
\newcommand{\cE}{\mathcal E}
\newcommand{\cF}{\mathcal F}
\newcommand{\cG}{\mathcal G}
\newcommand{\cQ}{\mathcal Q}
\newcommand{\cHom}{\mathcal Hom}
\newcommand{\gl}{\mathop{\mathfrak{gl}}\nolimits}
\newcommand{\even}{{\mathrm{even}}}
\newcommand{\h}{H}
\newcommand{\GGa}{{\mathbb G_a}}
\newtheorem{Proposition}{Proposition}[section]
\newtheorem{Theorem}[Proposition]{Theorem}
\newtheorem{Lemma}[Proposition]{Lemma}
\newtheorem{Corollary}[Proposition]{Corollary}
\theoremstyle{definition}
\newtheorem{Remark}[Proposition]{Remark}
\newtheorem{Definition}[Proposition]{Definition}
\newtheorem{Notation}[Proposition]{Notation}
\title[Grosshans Filtration]{Good Grosshans filtration in a family}
\date{\today}
\author{Wilberd van der Kallen}
\address{Mathematisch Instituut Universiteit Utrecht\\
P.O. Box 80.010, NL-3508 TA Utrecht, The Netherlands}
\email{W.vanderKallen@uu.nl}
\urladdr{http://www.xs4all.nl/~wilberdk/}
\keywords{good Grosshans filtration; cohomological finite generation. \\  $\hbox{\quad \enskip }$ {\bf   MSC 2010:\!} 
20G05, 20G10, 20G35.}
\begin{document}

\def\smfbyname{}

\begin{abstract}We reprove the main result of our joint work \cite{Srinivas vdK}, with the base field replaced by a
commutative noetherian ring $\k$.
 This has repercussions for the cohomology $H^*(G,A)$
of a reductive group scheme $G$ over $\k$, with coefficients in a finitely generated commutative $\k$-algebra $A$.
 For clarity we follow  \cite{Srinivas vdK} closely.
\end{abstract}

\begin{altabstract}
Nous g\'en\'eralisons le r\'esultat principal de  \cite{Srinivas vdK}, en rempla\c cent le corps de base par un anneau 
commutatif noeth\'erien $\k$. Ainsi on obtient de l'information sur la cohomologie $H^*(G,A)$,
 o\`u $G$ est un sch\'ema en groupes
r\'eductif sur $\k$ et $A$ est une $\k$-alg\`ebre de type fini. 
Nous suivons les grandes lignes du texte original \cite{Srinivas vdK}. 
\end{altabstract}

\maketitle

\section{Introduction}Let $\k$ be a noetherian ring. 
Consider  a flat linear algebraic  group scheme
$G$ defined over  $\k$.
Recall that $G$ has the cohomological finite generation property (CFG)
if the following holds:
Let $A$ be a finitely generated commutative $\k$-algebra on which $G$ acts
 rationally by $\k$-algebra automorphisms. (So $G$ acts from the right
 on $\Spec(A)$.)
Then the cohomology ring $H^*(G,A)$ is finitely generated
as a $\k$-algebra. Here, as in
\cite[I.4]{Jantzen}, we use the cohomology introduced by
Hochschild, also known as `rational cohomology'.

This note is part of the project of studying (CFG) for reductive $G$.
More specifically, the intent of this note is to generalize the main result of \cite{Srinivas vdK}
to the case where the base ring of $\GL_N$  is 
our noetherian ring $\k$.
That will allow to enlarge the scope
of several results
in \cite{TvdK}, \cite{FvdK}. 
Let us give an example. 
Let $G$ be a reductive group scheme over $\Spec(\k)$ in the sense of SGA3. Recall this means
that $G$ is affine and smooth over $\Spec(\k)$ with geometric fibers that are connected reductive. 
Let $G$ act rationally by 
$\k$-algebra automorphisms on a finitely generated commutative $\k$-algebra $A$.
We do not know (CFG) in this generality, but now we can state at least that the $H^m(G,A)$ are noetherian modules
over the ring of invariants $A^G$. And if $\k$ contains a finite ring we do indeed know that $H^*(G,A)$ is a finitely
generated $\k$-algebra.
See  section \ref{consequences} for these results and related material.

To formulate the main result, let $N\geq1$ and let $G$ be the affine algebraic group 
$\GL_N$ or $\SL_N$ over  $\k$. We use notations and terminology as in \cite{Srinivas vdK}, \cite{FvdK}.
Recall in particular that a $G$-module $V$ module is said to have  \emph{good Grosshans filtration} if the embedding 
$\gr V\to \hull_\nabla(\gr V)$ of Grosshans is an isomorphism \cite[Definition 27]{FvdK}. Such a module is $G$-acyclic.
It does not need to be flat over $\k$. The module $V$ has a good Grosshans filtration if and only if it satisfies the
following cohomological criterion: $H^i(G,V\otimes_\k \nabla(\lambda))$ vanishes for all $i>0$ and all dominant weights
$\lambda$. Over fields this is the familiar criterion for having a good filtration. Indeed over a field there is no difference 
between `good filtration' and `good Grosshans filtration'. But modules with good filtration are required to be free over
$\k$ and this is not the right requirement in our present setting. We wish to allow the filtration of $V$ to have
an associated graded
that is a direct sum of modules of the form $\nabla(\lambda)\otimes_\k  J(\lambda)$ with $G$ acting trivially on $J(\lambda)$.
The $J(\lambda)$ do not have to be free over $\k$; they even do not have to be flat over $\k$.

Let $A$ be a finitely generated commutative $\k$-algebra on which $G$ acts
rationally by $\k$-algebra automorphisms. Let $M$ be a noetherian $A$-module
on which $G$ acts compatibly. This means that the structure map $A\otimes_\k  M\to
M$ is a $G$-module map. We also say that $M$ is a (noetherian) $AG$-module.
(Later our convention will be that any $AG$-module is  noetherian.)

Our main theorem is

\begin{Theorem}\label{maingood}
If $A$ has a good Grosshans filtration, then
there is a finite resolution $$0\to M\to N_0\to N_1\to\cdots\to N_d\to 0$$
where the $N_i$ are noetherian $AG$-modules with good Grosshans filtration.
\end{Theorem}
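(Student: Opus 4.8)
The plan is to follow \cite{Srinivas vdK}, reworking every step over the noetherian ring $\k$ instead of over a field: the resolution is produced one term at a time, and its length is bounded by a cohomological finiteness statement. Write $U$ for the maximal unipotent subgroup of $G$ entering Grosshans' theory, so that $\k[G/U]=\bigoplus_\lambda\nabla(\lambda)$ and $G/U$ is a quasi-affine open in the affine Grosshans scheme $G/\!\!/U=\Spec\k[G/U]$ whose complement has codimension $\ge2$. The first task is the building block: \emph{every noetherian $AG$-module $M$ embeds into a noetherian $AG$-module with good Grosshans filtration.} The tautologically available ambient modules — induced modules such as $M\otimes_\k\k[G]$ and their hulls $\hull_\nabla(-)$ — are $G$-acyclic and carry good Grosshans filtrations but are much too big over $A$, so they must be cut down while keeping both the good Grosshans filtration and the embedding of $M$. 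I would do this by passing to the Grosshans-graded module $\gr M$ over the finitely generated $\k$-algebra $R:=\gr A=\hull_\nabla(\gr A)$, which is noetherian, using that on noetherian graded $R$-modules the operation $\hull_\nabla$ keeps modules noetherian (it is essentially the pushforward of a coherent sheaf from $G/U$ to $G/\!\!/U$, which stays coherent since the complement has codimension $\ge2$), and then transporting the result back through the filtration. Here noetherianity of $\k$ enters in the module-theoretic (Hilbert basis) way, together with Grosshans' theorem that $\gr A$ is finitely generated over $\k$.

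Iterating the building block produces a coresolution $0\to M\to N_0\to N_1\to\cdots$ by noetherian $AG$-modules with good Grosshans filtration, and it remains to truncate it after finitely many steps. Let $Q_k$ be defined by the exact sequence $0\to M\to N_0\to\cdots\to N_k\to Q_k\to0$. By the cohomological criterion $Q_k$ has good Grosshans filtration iff $H^{>0}(G,Q_k\otimes_\k\nabla(\lambda))=0$ for all dominant $\lambda$; since each $N_i$ already satisfies $H^{>0}(G,N_i\otimes_\k\nabla(\lambda))=0$ and each $\nabla(\lambda)$ is $\k$-flat, dimension shifting along this sequence gives
$$H^{i}(G,Q_k\otimes_\k\nabla(\lambda))\;\cong\;H^{i+k+1}(G,M\otimes_\k\nabla(\lambda)),\qquad i\ge1 .$$
Hence the coresolution may be stopped, with $Q_k$ as last term, once $k+1$ exceeds $\sup\{\,i:\exists\lambda,\ H^{i}(G,M\otimes_\k\nabla(\lambda))\neq0\,\}$, and the theorem reduces to the finiteness of this supremum. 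The crucial observation is that all these groups are packaged into a single one: $\bigoplus_\lambda H^{i}(G,M\otimes_\k\nabla(\lambda))=H^{i}(G,M\otimes_\k\k[G/U])$, and $M\otimes_\k\k[G/U]$ is a coherent module over the finitely generated $\k$-algebra $A\otimes_\k\k[G/U]$, i.e.\ a coherent sheaf on the noetherian scheme $\Spec A\times_\k(G/\!\!/U)$. Through the local-cohomology exact sequence of the open immersion $\Spec A\times_\k(G/U)\hookrightarrow\Spec A\times_\k(G/\!\!/U)$ and the codimension-$\ge2$ property, $H^*(G,M\otimes_\k\k[G/U])$ is controlled by the sheaf cohomology of a coherent sheaf on a scheme of finite Krull dimension — finite precisely because $\k$ is noetherian — hence vanishes above that dimension.

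The main obstacle is exactly this passage from a field to a noetherian ring: the modules in play need not be $\k$-flat, so \emph{good filtration} has to be replaced throughout by \emph{good Grosshans filtration}, and there is no reduction to residue fields. The genuinely technical points are: (i)~re-proving over $\k$ Grosshans' finiteness and codimension-$\ge2$ statements and the good behaviour of $\hull_\nabla$ on noetherian modules — this is also where working with $\GL_N$ or $\SL_N$, which have well-behaved integral forms of the $\nabla(\lambda)$, is convenient; (ii)~the exactness and acyclicity facts behind the dimension shift, in the absence of $\k$-flatness; and (iii)~making the local-cohomology dimension estimate work integrally over $\Spec A$, including the requisite depth (Serre $S_2$-type) condition for $G/\!\!/U$ over $\k$. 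Once these integral refinements are in place, the inductive argument closes exactly as over a field in \cite{Srinivas vdK}.
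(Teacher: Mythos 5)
Your outline correctly identifies the two things that need to be done — build the first term $N_0$ of the coresolution, and bound its length by a cohomological finiteness statement — but both of the mechanisms you propose for doing them have genuine gaps, and the second one is fatal.

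\textbf{The finiteness bound is the hard part, and your argument for it does not work.} You claim that $H^i(G,M\otimes_\k\k[G/U])$ ``is controlled by the sheaf cohomology of a coherent sheaf on a scheme of finite Krull dimension, hence vanishes above that dimension.'' But $H^i(G,-)$ is rational (Hochschild) group cohomology, computed from the Hochschild complex built out of tensor powers of $\k[G]$, not sheaf cohomology of a coherent sheaf on a noetherian scheme; there is no Krull-dimension bound of the kind you invoke. Concretely, for $G$ reductive over a field of characteristic $p>0$, $H^*(G,V)$ is typically nonzero in infinitely many degrees for quite ordinary coefficient modules $V$, and the local-cohomology sequence for $\Spec A\times(G/U)\hookrightarrow\Spec A\times(G/\!\!/U)$ controls only $H^0$ (invariants), which is where Grosshans' codimension-$\ge2$ theorem actually lives. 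If such a dimension argument worked it would already have given a one-paragraph proof of finite good filtration dimension over a field, whereas \cite{Srinivas vdK} needed the whole Koszul resolution of the diagonal for exactly this step. The paper isolates the needed statement as Proposition~\ref{propnegligible}, and its proof occupies the bulk of the article: one reduces to Picard-graded Cox rings of products of Grassmannians, resolves the diagonal in $Z\times Z$ by the Levine--Srinivas--Weyman Koszul complex, and runs a descending induction through ``collapsed'' gradings and the $\Lambda$-graded Grosshans filtration. None of that is replaceable by a Krull-dimension count.

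\textbf{The building block is also not lifted correctly.} You propose to produce the embedding $M\hookrightarrow N_0$ by first forming $\hull_\nabla(\gr M)$ over $\gr A$ and then ``transporting the result back through the filtration.'' But $\hull_\nabla(\gr M)$ is a $\gr A$-module, not an $A$-module, and the Grosshans filtration is infinite; there is no general device that lifts a graded embedding $\gr M\hookrightarrow \widetilde N$ back to an $AG$-module embedding $M\hookrightarrow N_0$ with $\gr N_0\cong\widetilde N$. The paper sidesteps this entirely: it takes $N_0:=M\otimes_\k\St_r\otimes_\k\St_r$ (using that $(\St_r\otimes\St_r)^G=\k$, so there is a canonical split injection $M\to N_0$), and Proposition~\ref{propnegligible} is formulated precisely so that its second clause says this $N_0$ has good Grosshans filtration for $r\gg0$, while its first clause gives the drop in Grosshans filtration dimension that terminates the induction. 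So the missing ingredient is not a local-cohomology estimate but the diagonal-resolution machinery producing Proposition~\ref{propnegligible}; with it, your dimension-shift closing argument is exactly right, and without it the proof does not close.
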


\begin{Corollary}\label{noetheriancohomology}
 The $H^i(G,M)$ are noetherian $A^G$-modules and they vanish for $i\gg0$.
\end{Corollary}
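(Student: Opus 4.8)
The plan is to compute $H^*(G,M)$ directly from the finite resolution furnished by Theorem~\ref{maingood}, say $0\to M\to N_0\to N_1\to\cdots\to N_d\to 0$. Since each $N_i$ has a good Grosshans filtration it is $G$-acyclic, that is, $H^j(G,N_i)=0$ for $j>0$. Breaking the resolution into short exact sequences $0\to Z_{j-1}\to N_{j-1}\to Z_j\to 0$ with $Z_0=M$ and $Z_d=N_d$, and running the associated long exact cohomology sequences, the acyclicity of the $N_{j-1}$ yields isomorphisms $H^i(G,Z_j)\cong H^{i+1}(G,Z_{j-1})$ for $i\ge 1$, together with exact sequences $0\to Z_{j-1}^G\to N_{j-1}^G\to Z_j^G\to H^1(G,Z_{j-1})\to 0$. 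A routine chase through these identifies $H^i(G,M)$ with the $i$-th cohomology of the complex of invariants
$$0\to N_0^G\to N_1^G\to\cdots\to N_d^G\to 0$$
placed in degrees $0,\dots,d$; this is just the general principle that a right derived functor may be computed from a resolution by acyclic objects. In particular $H^i(G,M)=0$ for $i>d$, which disposes of the vanishing assertion.

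For the finiteness assertion I would show that each term $N_i^G$ of the complex above is a noetherian $A^G$-module, after which $H^i(G,M)$, being a subquotient of $N_i^G$, is automatically a noetherian $A^G$-module provided $A^G$ is a noetherian ring. Two finiteness inputs thus suffice. First, that $A^G$ is a finitely generated $\k$-algebra: this is the Grosshans finiteness theorem for the $G$-algebra $A$, applicable precisely because $A$ has a good Grosshans filtration, and since $\k$ is noetherian it follows that $A^G$ is a noetherian ring. Second, that for a noetherian $AG$-module $N$ with good Grosshans filtration the invariants $N^G$ form a finitely generated $A^G$-module, the module version of the same finiteness, which the good Grosshans filtration formalism built around $\hull_\nabla$ is designed to deliver. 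Applying this to each $N_i$ finishes the argument.

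The homological bookkeeping of the first paragraph is completely formal once acyclicity of the $N_i$ is known, so the only ingredient that is not a matter of diagram chasing is the finiteness of invariants over $\k$ invoked in the second paragraph. That is the step one must treat with care; but it is precisely the relative-over-$\k$ analogue of Grosshans' classical finiteness results and belongs to the ambient machinery, so it presents no serious obstacle. The substantive content of the corollary is already contained in Theorem~\ref{maingood}.
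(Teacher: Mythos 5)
Your proof is correct and follows essentially the same route as the paper: compute $H^*(G,M)$ via the acyclic resolution $N_\bullet$ to get $H^i(G,M)$ as a subquotient of $N_i^G$, then invoke invariant theory (the paper cites \cite[Theorem 12, Theorem 9]{FvdK}) to see that $A^G$ is noetherian and each $N_i^G$ is a noetherian $A^G$-module. One small imprecision: the finiteness of $N_i^G$ over $A^G$ for a noetherian $AG$-module $N_i$ does not use the good Grosshans filtration hypothesis at all—it is a consequence of power reductivity of $G$; the good Grosshans filtration is needed only for acyclicity.
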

\begin{proof}
One may compute $H^*(G,M)$ with the resolution $N_0\to \cdots N_d\to0$. So the result follows from invariant
theory \cite[Theorem 12, Theorem 9]{FvdK}.
\end{proof}

\begin{Remark}
 It is natural to ask if the same results hold for other Dynkin types. For the Corollary the answer is yes, because
of Theorem \ref{bounded power fg:theorem} below. For Theorem \ref{maingood} we do not know how to keep the $N_i$ noetherian,
but otherwise it goes through by \cite[Proposition 28]{FvdK} and Theorem \ref{bounded power fg:theorem} below.
\end{Remark}

We will actually prove a more technical version of the theorem. This is the key difference with the proof
in \cite{Srinivas vdK}.
Recall that the fundamental weights $\varpi_1$, \dots , $\varpi_N$
are given by $\varpi_i=\sum_{j=1}^i\epsilon_j$. Let $\rho$ be their sum and let 
$\St_r=\nabla(r\rho)$. Let $U$ be the subgroup of unipotent upper triangular matrices.
\begin{Proposition}\label{propnegligible}
 If $A$ has a good Grosshans filtration, then
\begin{itemize}
 \item $H^i(\SL_N,M\otimes_\k\k[\SL_N/U])$ vanishes for $i\gg0$,
\item $H^1(\SL_N,M\otimes_\k\St_r\otimes_\k\St_r\otimes_\k \k[\SL_N/U])$ vanishes for $r\gg0$.
\end{itemize}
\end{Proposition}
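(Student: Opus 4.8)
\emph{Reduction.}
The plan is to turn both assertions into statements about the $T$-weights occurring in the cohomology of the unipotent radical, and then to dismantle $M$ by a Grosshans dévissage. Since $\k[\SL_N/U]$ carries a good Grosshans filtration whose layers are the $\nabla(\lambda)$, $\lambda$ dominant, each of multiplicity one, and since for any $\SL_N$-module $V$ Kempf vanishing and the tensor identity give $H^i(\SL_N, V\otimes_\k\nabla(\lambda)) = \big(H^i(U', V|_{U'})\big)_{-\lambda}$, with $U'$ the unipotent radical of the Borel adapted to $\lambda$, we get
\[
 H^i(\SL_N, V\otimes_\k\k[\SL_N/U]) \;=\; \bigoplus_{\lambda\ \mathrm{dom}} \big(H^i(U', V|_{U'})\big)_{-\lambda}.
\]
So $H^i(\SL_N, V\otimes_\k\k[\SL_N/U])$ vanishes exactly when the single $U'$-module $H^i(U', V|_{U'})$ meets no antidominant weight, and the first bullet asks that this happen for $V=M$ once $i\gg0$, the second that it happen in degree $1$ for $V=M\otimes_\k\St_r\otimes_\k\St_r$ once $r\gg0$.

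\emph{Dévissage and the first bullet.}
Using the hypothesis on $A$, I would embed $M$ into a noetherian $AG$-module $N$ with good Grosshans filtration whose cokernel $Q$ has, as an $A$-module, strictly smaller-dimensional support --- the Grosshans hull; producing $N$ as a \emph{noetherian} module over the noetherian base is, as the Remark after Theorem \ref{maingood} hints, a delicate point special to type $A$. Since $N$ has a good Grosshans filtration, $H^i(U', N|_{U'})$ has no antidominant weight for $i>0$, so the long exact sequence of $0\to M\to N\to Q\to 0$ shows that the antidominant part of $H^i(U', M|_{U'})$ is, for $i>0$, a subquotient of $H^{i-1}(U', Q|_{U'})$. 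Descending on $\dim\Spec A$ reduces us to the case in which $Q$ is, essentially, a colimit of finite $G$-modules of bounded highest weight over a residue field of $\k$; there a direct weight count finishes, because the weights of $H^i(U', Q|_{U'})$ are bounded weights shifted by weights that sit in ever deeper portions of the positive root cone as $i$ grows, while no nonzero element of the positive root cone is antidominant (in characteristic $0$ this is immediate, $U'$ having cohomological dimension $\dim U'$). This gives the first bullet, with a bound on $i$ depending only on $\dim\Spec A$ and a generating set of $M$.

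\emph{Steinberg modules and the second bullet.}
Running the same dévissage with everything tensored by $\St_r\otimes_\k\St_r$ --- legitimate because this preserves good Grosshans filtrations in type $A$, by \cite[Proposition 28]{FvdK} and the tensor product theorem --- reduces the second bullet, again via $\dim\Spec A$, to showing that for a finite $G$-module $Q$ of bounded highest weight over a residue field, $(Q\otimes_\k\St_r\otimes_\k\St_r)^{U'}$ has no antidominant weight once $r\gg0$. Here the Steinberg module earns its keep: over a field of characteristic $p$ it restricts to the regular representation of the first infinitesimal layer of $U'$, hence so does $\St_r$ for $r=p^n-1$ after $n$ layers, so $Q\otimes_\k\St_r\otimes_\k\St_r$ becomes free over those layers and $(Q\otimes_\k\St_r\otimes_\k\St_r)^{U'}$ is computed by an $n$-fold Frobenius contraction of $Q$; its weights are those of $Q$ divided by $p^n$, which for $n$ (i.e.\ $r$) large are pulled close enough to the dominant chamber to leave the antidominant cone. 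Choosing such an $r$ simultaneously at the finitely many residue characteristics below the weight bound --- the large primes being harmless --- yields the second bullet.

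\emph{Main obstacle.}
The real difficulty is replacing ``a field'' by ``a noetherian $\k$''. There is then no Frobenius kernel of $\SL_N$ or of $U'$, and the modules $M$ need not be $\k$-flat, so the construction of a noetherian Grosshans hull, the Frobenius-contraction step, and the comparison of $H^*(U',M)$ with the cohomology of $\gr M$ must all be redone inside the Grosshans machinery of \cite{FvdK}, handled one residue characteristic at a time and glued. Superimposed on this is the demand that the resulting bound on $i$, and the size of $r$, be independent of the dominant weight $\lambda$ --- exactly what forces one to work throughout with the single algebra $\k[\SL_N/U]$ and the single $U'$-module $H^i(U',-|_{U'})$ rather than weight by weight. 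Arranging the interlocking inductions --- on $\dim\Spec A$, on the generating data of $M$, and on the Frobenius depth $n$ --- so that they terminate with $\lambda$-independent bounds is where the bulk of the work lies.
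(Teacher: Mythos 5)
Your opening reduction is correct: for dominant $\lambda$ the tensor identity gives $R^q\ind_{B^-}^{G}(V|_{B^-}\otimes\lambda)=V\otimes R^q\ind_{B^-}^{G}(\lambda)$, Kempf vanishing kills the $q>0$ terms, and the composite-functor spectral sequence degenerates to yield $H^i(G,V\otimes\nabla(\lambda))=H^i(U^-,V)_{-\lambda}$. So recasting both bullets as statements about antidominant weights in a single $U^-$-cohomology module is a legitimate starting point, though the paper never phrases things this way.

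The central step, however, is circular. You propose to embed $M$ into a noetherian $AG$-module $N$ with good Grosshans filtration ``the Grosshans hull'' and to induct on something (you say $\dim\Spec A$). But producing such an $N$ with control on the cokernel is precisely the content of Theorem~\ref{maingood}, and in the paper Theorem~\ref{maingood} is \emph{deduced from} Proposition~\ref{propnegligible} (one takes $N_0=M\otimes_\k\St_r\otimes_\k\St_r$, using the second bullet to get good Grosshans filtration and the first bullet to bound the length of the resolution). There is no way to bootstrap the dévissage you sketch without already having the proposition in hand. The ``strictly smaller-dimensional support'' claim for the cokernel is also not something the Grosshans hull gives you; the quantity that actually drops is the Grosshans filtration dimension, not $\dim\Spec A$, and only after Proposition~\ref{propnegligible} is known.

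The Steinberg part of your argument is in fact the one place where you touch the same technique as the paper, but the paper deploys it differently and in a much more limited role. The paper replaces $M$ by $\gr_\Lambda M$ (a $\Lambda$-graded refinement of the Grosshans graded), covers $\gr_\Lambda A$ by a subring $C^{D}$ of a multi-Picard-graded Cox ring $C$ of a product of Grassmannians (Lemma~\ref{cover}), and then proves Propositions~\ref{Picardgraded} and~\ref{retractedgraded} — every $CG\Theta$-, resp.\ $CG\Lambda$-, module is negligible — via the functorial Koszul resolution of the diagonal in $Z\times Z$ for $Z$ a Grassmannian, inducting on the rank of the grading and on how collapsed the grading is. The ``$V_\F\otimes\nabla_\F(r\rho)\otimes\nabla_\F(r\rho)$ has good filtration for $r\gg0$'' fact you rely on appears only in the base case of that induction (modules noetherian over $C_0$), handled through Wang's theorem and the universal coefficient theorem rather than by explicit Frobenius-kernel freeness. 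Finally, the passage from $\gr_\Lambda M$ back to $M$ is not a dévissage on support but an observation that a single pair $(i,r)$ good for $\gr M$ works simultaneously for all truncations $M_{\leq\lambda}$, after which one takes a direct limit. In short: your proposal correctly identifies where the difficulty lies (uniform bounds over a noetherian base, non-flat modules), but it does not supply a noncircular mechanism for the dévissage, and it omits the Cox ring / resolution-of-the-diagonal machinery that is the actual engine of the proof.
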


Define the `Grosshans filtration dimension' of a nonzero $M$ to be the minimum 
$d$ for which $H^{d+1}(\SL_N,M\otimes_\k \k[\SL_N/U])$
vanishes. As $(\St_r\otimes_\k\St_r)^G=\k$, we have a natural map $V\to V\otimes_\k\St_r\otimes_\k\St_r$
for any $G$-module $V$. In the theorem
one may start with $N_0:=M\otimes_\k\St_r\otimes_\k\St_r$. The cokernel of $M\to N_0$ will then have a lower 
Grosshans filtration dimension. And Grosshans filtration dimension zero implies good Grosshans filtration 
\cite[Proposition 28]{FvdK}.

\begin{Remark}
 In Proposition \ref{propnegligible} it would suffice to tensor once with $\St_r$. Our formulation is adapted to the proof
of Theorem \ref{maingood}.
\end{Remark}

As in \cite{Srinivas vdK} the method of proof of Theorem \ref{maingood}
is based on the functorial resolution \cite{twisted}
of the ideal of the diagonal in
$Z\times Z$  when $Z$ is a Grassmannian of subspaces of $\k^N$.
This is used inductively to study equivariant sheaves on a product $X$
of such Grassmannians.
That leads to a special case of the theorems, with $A$ equal to the 
Cox ring of $X$, multigraded by the Picard group $\Pic(X)$, and $M$ compatibly
 multigraded.
Next one treats cases when on the same $A$ the multigrading is replaced 
with a `collapsed' grading with smaller value group
and $M$ is only required to be 
 multigraded compatibly with this new grading. Here the trick is that 
an associated graded of $M$ has a multigrading that is collapsed a little less.
The suitably multigraded Cox rings are then used as in \cite{Srinivas vdK}
to cover the general case
\ref{maingood}. 

Recall that section \ref{consequences} gives some consequences for earlier work.

\section{Recollections and conventions}
Some unexplained notations, terminology, properties, \ldots can be
found in \cite{Jantzen}. Until section 8 the group $G$ is either $\GL_N$ or $\SL_N$.
Some things are best told with $\GL_N$, but the conclusion of Proposition \ref{propnegligible}
refers only to the $\SL_N$-module structure. 

\label{G=GL}
First let
 $G=\GL_N$, with $B^+$ its subgroup of upper triangular matrices,
$B^-$ the opposite Borel subgroup, $T=B^+\cap B^-$
the diagonal subgroup, $U=U^+$ the unipotent radical of $B^+$.
The roots of $U$ are positive, contrary to the  Aarhus convention followed in \cite{FvdK}.
The character group $X(T)$ has a basis $\epsilon_1$ \ldots, $\epsilon_N$
with $\epsilon_i(\diag(t_1,\ldots,t_N))=t_i$. 
An element $\lambda=\sum_i \lambda_i\epsilon_i$ of $X(T)$ is often denoted 
$(\lambda_1,\ldots,\lambda_N)$. It is called a polynomial weight if the
$\lambda_i$ are nonnegative. It is called a dominant weight if $\lambda_1\geq
\cdots\geq\lambda_N$. It is called anti-dominant if $\lambda_1\leq
\cdots\leq\lambda_N$. The fundamental weights $\varpi_1$, \dots , $\varpi_N$
are given by $\varpi_i=\sum_{j=1}^i\epsilon_j$.
If $\lambda\in X(T)$ is dominant, then $\ind_{B^-}^G(\lambda)$ is the
\emph{dual Weyl module} or \emph{costandard module} $\nabla_G(\lambda)$,
or simply $\nabla(\lambda)$, with highest weight $\lambda$. 
The \emph{Grosshans height} of $\lambda$ is
$\hgt(\lambda)=\sum_i(N-2i+1)\lambda_i$. It extends to a homomorphism
$\hgt:X(T)\otimes\Q\to\Q$.
The determinant representation has weight $\varpi_N$ and one has
$\hgt(\varpi_N)=0$.
Each positive root $\beta$ has $\hgt(\beta)>0$.
If $\lambda$ is a dominant polynomial weight, then $\nabla_G(\lambda)$
is called a \emph{Schur module}. If $\alpha$ is a partition with at most $N$ parts
then we may view it as a dominant polynomial weight and
the \emph{Schur functor} $S^\alpha$  maps $\nabla_G(\varpi_1)$ to 
$\nabla_G(\alpha)$. (This is the convention followed in \cite{twisted}.
In \cite{Akin} the same Schur functor is labeled with the conjugate
partition $\tilde \alpha$.)
The formula
$\nabla(\lambda)=\ind_{B^-}^G(\lambda)$ just means that $\nabla(\lambda)$
is obtained from the Borel-Weil construction:
$\nabla(\lambda)$ equals $H^0(G/B^-,{\mathcal L}_\lambda)$ for a certain
 line bundle ${\mathcal L}_\lambda$ on the
flag variety $G/B^-$.

Now consider the case $G=\SL_N$. \label{case SL} 
There are similar conventions for \hbox{$\SL_N$-modules}. For instance, the costandard
modules for $\SL_N$ are the restrictions of those for $\GL_N$.
The Grosshans height on $X(T)$ induces one on $X(T\cap\SL_N)\otimes\Q$. 
The \emph {multicone} $\k[\SL_N/U]$ consists of the $f$ in the coordinate ring
$\k[\SL_N]$ that satisfy $f(xu)=f(x)$ for $u\in U$. As an 
$\SL_N$-module it is the direct sum of all costandard modules. It is also
a finitely generated algebra \cite{Kempf Ramanathan}, \cite{Grosshans contr}, \cite[Lemma 23]{FvdK}.
Note that $\k[\SL_N/U]$ is $\SL_N$-equivariantly isomorphic to $\k[\SL_N/U^-]$, so that here it does not matter 
whether one follows the  Aarhus convention or not.

\begin{Definition}
A \emph {good filtration} of a $G$-module $V$ is a filtration
$0=V_{\leq -1}\subseteq V_{\leq 0} \subseteq V_{\leq 1}\ldots$ by
$G$-submodules $V_{\leq i}$ with $V=\cup_iV_{\leq i}$, so
that its associated graded $\gr V$ is a direct sum of costandard modules.

A Schur filtration of a polynomial $\GL_N$-module $V$ is a filtration
$0=V_{\leq -1}\subseteq V_{\leq 0} \subseteq V_{\leq 1}\ldots$ by
$\GL_N$-submodules with $V=\cup_iV_{\leq i}$, so
that its associated graded $\gr V$ is a direct sum of Schur modules.
The \emph{Grosshans filtration} of $V$ is the filtration with
$V_{\leq i }$ the largest $G$-submodule of $V$ whose weights
$\lambda$ all satisfy $\hgt(\lambda)\leq i$. Good filtrations and
Grosshans filtrations for $\SL_N$-modules are defined similarly. 
The literature contains more restrictive definitions 
of good/Schur filtrations. Ours are the right ones
when dealing with representations that need not be finitely generated over $\k$.

Let $M$ be a $G$-module provided with the Grosshans filtration.
Recall from \cite{FvdK} that  $M$ has \emph{good Grosshans filtration} if
the embedding of $ \gr M$ into $\hull_\nabla(\gr M)=\ind_{B^-}^GM^U$ is an isomorphism.
Then $ \gr M$ is a direct sum of modules of the form $\nabla(\lambda)\otimes_\k J(\lambda)$
with $G$ acting trivially on $J(\lambda)$. The $J(\lambda)$ need not be flat.
If they are all free then we are back at the case of a good filtration.
\end{Definition}

A $G$-module $M$ has good Grosshans filtration if and only if $H^1(\SL_N,M\otimes_\k\k[\SL_N/U])$
vanishes \cite[Proposition 28]{FvdK}. And $H^1(\SL_N,M\otimes_\k\k[\SL_N/U])$ vanishes if and only if $H^1(\SL_N,M\otimes_\k V)$ vanishes 
for every module $V$ with good filtration.
 A module with good filtration has good Grosshans filtration and is flat as a $\k$-module.
The tensor product of two modules with good filtration has good filtration \cite[Lemma B.9, II Proposition 4.21]{Jantzen}.
The tensor product of a module with good filtration 
and one with good Grosshans filtration thus
has good Grosshans filtration. If $M$ is a $G$-module, then $M\otimes \k[G]$ has a good Grosshans filtration
by \cite[I Lemma 4.7a]{Jantzen}. 
This may be used in dimension shift arguments.
 If $H^i(\SL_N,M\otimes_\k\k[\SL_N/U])$ vanishes, then so does $H^{i+1}(\SL_N,M\otimes_\k\k[\SL_N/U])$.
This follows from \cite[Proposition 28]{FvdK} and dimension shift. 
The following Lemma is also proved by dimension shift.

\begin{Lemma}\label{goodGgood}
 If $M$ has finite  Grosshans filtration dimension $d$ and $V$ has good filtration,
then $M\otimes V$ has   finite  Grosshans filtration dimension $\leq d$.\qed
\end{Lemma}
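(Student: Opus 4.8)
The plan is to prove the lemma by a dimension shift, inducting on the Grosshans filtration dimension $d$. For the base case $d=0$ the module $M$ has good Grosshans filtration, hence so does $M\otimes_\k V$, being the tensor product of a module with good Grosshans filtration and one with good filtration; thus $H^1(\SL_N,M\otimes_\k V\otimes_\k\k[\SL_N/U])$ vanishes, which says precisely that $M\otimes_\k V$ has Grosshans filtration dimension $\leq 0$.

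Now take $d\geq1$ and assume the lemma proved whenever the first factor has Grosshans filtration dimension $<d$. The comodule structure map gives a short exact sequence of $G$-modules $0\to M\to M\otimes_\k\k[G]\to Q\to 0$ which is split as a sequence of $\k$-modules, the counit splitting $M\to M\otimes_\k\k[G]$; recall that $M\otimes_\k\k[G]$ has good Grosshans filtration by \cite[I Lemma 4.7a]{Jantzen}, so $H^i(\SL_N,M\otimes_\k\k[G]\otimes_\k\k[\SL_N/U])=0$ for all $i\geq1$. Since the sequence is $\k$-split it stays exact after applying $-\otimes_\k\k[\SL_N/U]$, and the associated long exact cohomology sequence identifies $H^d(\SL_N,Q\otimes_\k\k[\SL_N/U])$ with $H^{d+1}(\SL_N,M\otimes_\k\k[\SL_N/U])$, which is $0$ by hypothesis. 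Hence $Q$ has Grosshans filtration dimension $\leq d-1$ (note $Q\neq0$, since for $d\geq1$ the module $M$ fails to have good Grosshans filtration while $M\otimes_\k\k[G]$ has it, so $M\not\cong M\otimes_\k\k[G]$).

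It remains to tensor the same $\k$-split sequence over $\k$ with $V\otimes_\k\k[\SL_N/U]$, which again keeps it exact, and to pass to the long exact cohomology sequence. There $H^{d+1}(\SL_N,M\otimes_\k\k[G]\otimes_\k V\otimes_\k\k[\SL_N/U])$ vanishes because $(M\otimes_\k\k[G])\otimes_\k V$ has good Grosshans filtration, while $H^d(\SL_N,Q\otimes_\k V\otimes_\k\k[\SL_N/U])$ vanishes by the induction hypothesis applied to $Q$ (of Grosshans filtration dimension $\leq d-1$) and $V$. Squeezed between these two, $H^{d+1}(\SL_N,M\otimes_\k V\otimes_\k\k[\SL_N/U])$ must vanish, i.e.\ $M\otimes_\k V$ has Grosshans filtration dimension $\leq d$, which closes the induction. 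The only step requiring attention is the $\k$-splitness of the dimension-shift sequence: it is what allows the cohomology long exact sequences to be formed after tensoring over a $\k$ that need not be flat, and it plays the role that flatness of coefficients plays over a field. Past that, the argument is a routine dimension shift with no real obstacle.
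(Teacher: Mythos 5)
Your proof is correct and is exactly the dimension-shift argument the paper has in mind (the paper supplies no details beyond the phrase "also proved by dimension shift"). The two things that need care — that $M\otimes_\k\k[G]$ has good Grosshans filtration, and that the $\k$-splitness of $0\to M\to M\otimes_\k\k[G]\to Q\to 0$ is what lets you tensor over a possibly non-flat $\k$ and still get an exact sequence of $G$-modules — are both handled properly, so the induction closes as you describe.
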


Note that $\k$ itself has good filtration.
So a module with good Grosshans filtration is $\SL_N$-acyclic, hence also $G$-acyclic when $G=\GL_N$.

\section{Gradings}
Let $\Theta=\Z^r$ with standard basis $e_1$, \ldots , $e_r$.
We partially order $\Theta$ by declaring that $I\geq J$ if $I_q\geq J_q$
for $1\leq q\leq r$. The \emph{diagonal} $\diag(\Theta)$ consists of the integer
multiples of the vector $E=(1,\ldots,1)$.
By a \emph{good $G$-algebra} we mean a 
finitely generated commutative $\k$-algebra $A$ on which $G$ acts rationally
by $\k$-algebra automorphisms so that $A$ has a good filtration as a $G$-module.
We say that $A$ is a \emph{good \GD-algebra} if moreover $A$ is $\Theta$-graded
by $G$-submodules,
$$A=\bigoplus_{I\in\Theta,~I\geq0}A_I$$
 with 
\begin{itemize}
\item
 $A_IA_J\subset A_{I+J}$,
\item $A$
is generated over $A_0$ by the $A_{e_q}$,
\item $G$ acts trivially on $A_0$.
\end{itemize}
Motivated by the Segre embedding we define 
$$\diag(A)=\bigoplus_{I\in\diag(\Theta)}A_I$$ and $\Proj(A):=\Proj(\diag(A))$.
By an \AG-module we will mean a noetherian $A$-module $M$
with compatible $G$-action.
If moreover $M$ is $\Theta$-graded by $G$-submodules $M_I$ so that 
$A_IM_J\subset M_{I+J}$, then we call $M$ an \emph{\AGD-module}.

\begin{Definition}
We call a $G$-module $M$ \emph{negligible} if
\begin{itemize}
\item $H^i(\SL_N,M\otimes_\k\k[\SL_N/U])$ vanishes for $i\gg0$ and  
\item $H^1(\SL_N,M\otimes_\k\St_r\otimes_\k\St_r\otimes_\k\k[\SL_N/U])$ vanishes for $r\gg0$.
\end{itemize}
 In other words,
$M$ must have finite 
Grosshans filtration dimension
and $M\otimes_\k\St_r\otimes_\k\St_r$ must have good Grosshans filtration for $r\gg0$.
We will be interested in $AG$-modules being negligible. In particular a {good \GD-algebra} $A$ is itself negligible. 
\end{Definition}

\begin{Lemma}\label{two/three}
Let $$0\to M'\to M\to M''\to 0$$ be exact, with
$M'$ be negligible. Then  $M$ is negligible iff $M''$ is negligible.
\end{Lemma}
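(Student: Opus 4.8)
\emph{Proof plan.} The plan is to tensor the given short exact sequence with the $\k$-flat $G$-modules that occur in the definition of negligibility and then to read off the two required equivalences from the long exact sequence in $\SL_N$-cohomology. I write $R=\k[\SL_N/U]$ and $V_r=\St_r\otimes_\k\St_r\otimes_\k R$. As recalled above, $R$ (a direct sum of costandard modules) and each $\St_r=\nabla(r\rho)$ have good filtration, hence are flat (in fact free) as $\k$-modules, so $R$ and all the $V_r$ are flat over $\k$. Tensoring $0\to M'\to M\to M''\to 0$ over $\k$ with such a module, endowed with the diagonal $\SL_N$-action, keeps it exact, so the long exact cohomology sequence will apply. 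I will also invoke the fact recorded above that vanishing of $H^i(\SL_N,W\otimes_\k R)$ forces vanishing of $H^{i+1}(\SL_N,W\otimes_\k R)$ for every $\SL_N$-module $W$.

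First I would handle the finite-Grosshans-filtration-dimension condition. Tensoring with $R$, the long exact sequence gives for each $i$ the exact segment
\[
H^i(\SL_N,M'\otimes_\k R)\to H^i(\SL_N,M\otimes_\k R)\to H^i(\SL_N,M''\otimes_\k R)\to H^{i+1}(\SL_N,M'\otimes_\k R).
\]
Since $M'$ is negligible the two outer terms vanish for $i$ large, so the middle arrow is an isomorphism for $i\gg 0$; hence $H^i(\SL_N,M\otimes_\k R)$ vanishes for $i\gg 0$ if and only if $H^i(\SL_N,M''\otimes_\k R)$ does, i.e.\ $M$ has finite Grosshans filtration dimension iff $M''$ has.

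Next I would handle the condition that $M\otimes_\k\St_r\otimes_\k\St_r$ have good Grosshans filtration for $r\gg 0$, i.e.\ that $H^1(\SL_N,M\otimes_\k V_r)=0$ for $r\gg 0$. Choose $r_0$ with $H^1(\SL_N,M'\otimes_\k V_r)=0$ for all $r\geq r_0$; applying the propagation fact to $W=M'\otimes_\k\St_r\otimes_\k\St_r$ yields also $H^2(\SL_N,M'\otimes_\k V_r)=0$ for $r\geq r_0$. Then the exact segment
\[
H^1(\SL_N,M'\otimes_\k V_r)\to H^1(\SL_N,M\otimes_\k V_r)\to H^1(\SL_N,M''\otimes_\k V_r)\to H^2(\SL_N,M'\otimes_\k V_r)
\]
has vanishing outer terms for $r\geq r_0$, so the middle arrow is an isomorphism; hence $H^1(\SL_N,M\otimes_\k V_r)$ vanishes for $r\gg 0$ if and only if $H^1(\SL_N,M''\otimes_\k V_r)$ does. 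Combining the two equivalences gives that $M$ is negligible iff $M''$ is negligible.

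The whole argument is essentially bookkeeping with the long exact sequence; the one place I expect to need care — the main, if minor, obstacle — is the asymmetry in the last step: negligibility of $M'$ supplies only $H^1$-vanishing, so before the long exact sequence can give an isomorphism on $H^1$ one must first upgrade this to $H^2$-vanishing, which is exactly what the propagation property of $H^\bullet(\SL_N,-\otimes_\k R)$ provides.
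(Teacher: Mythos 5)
Your proof is correct and takes the same route the paper intends: the paper's one-line proof just points to the propagation fact ($H^i$ vanishing forces $H^{i+1}$ vanishing), leaving the long-exact-sequence bookkeeping implicit, which is exactly what you have spelled out. In particular you correctly identified the only non-trivial step — upgrading the given $H^1$-vanishing for $M'\otimes_\k\St_r\otimes_\k\St_r$ to $H^2$-vanishing via that propagation fact before the connecting map can be controlled.
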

\begin{proof}
Use that if $H^i(\SL_N,V\otimes_\k\k[\SL_N/U])$ vanishes, then so does  $H^{i+1}(\SL_N,
V\otimes_\k\k[\SL_N/U])$.
\end{proof}
\begin{Lemma}
Let 
$0\to M_0\to M_1\to \cdots\to M_q\to 0$ be a complex of \AG-modules
whose homology modules $\ker(M_i\to M_{i+1})/\im(M_{i-1}\to M_{i})$
are negligible for $i=0,\dots,q$. If $M_i$ is negligible for $i<q$
then  $M_{q}$ is negligible.\qed
\end{Lemma}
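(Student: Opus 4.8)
The plan is to cut the complex into short exact sequences linking cycles, boundaries and homology, and then propagate negligibility through them by means of Lemma \ref{two/three}. Set $Z_i=\ker(M_i\to M_{i+1})$ and $B_i=\im(M_{i-1}\to M_i)$, so that the homology module at spot $i$ is $Z_i/B_i$, which is negligible by hypothesis. Submodules and quotients of noetherian $A$-modules are again noetherian, so $Z_i$, $B_i$ and $Z_i/B_i$ are \AG-modules and the word ``negligible'' applies to them. One has the two families of short exact sequences
$$0\to B_i\to Z_i\to Z_i/B_i\to 0, \qquad 0\to Z_i\to M_i\to B_{i+1}\to 0,$$
together with the boundary data $B_0=0$ and $Z_q=M_q$ (there is no $M_{-1}$, and the last map is $M_q\to 0$). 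Finally, the zero module is negligible.

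First I would prove by induction on $i$ that $B_i$ is negligible for $0\leq i\leq q$. The case $B_0=0$ is trivial. For the inductive step, assume $B_i$ is negligible for some $i<q$. In $0\to B_i\to Z_i\to Z_i/B_i\to 0$ both the submodule $B_i$ and the quotient $Z_i/B_i$ are negligible, so Lemma \ref{two/three} makes $Z_i$ negligible. In $0\to Z_i\to M_i\to B_{i+1}\to 0$ both the submodule $Z_i$ and the total module $M_i$ are now negligible ($M_i$ by hypothesis, as $i<q$), so Lemma \ref{two/three} makes $B_{i+1}$ negligible. This runs the induction up to $B_q$.

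To conclude, I would apply Lemma \ref{two/three} once more, to $0\to B_q\to M_q\to M_q/B_q\to 0$: the submodule $B_q$ is negligible by the previous step, the quotient $M_q/B_q=Z_q/B_q$ is the homology at spot $q$, hence negligible by hypothesis, so $M_q$ is negligible. There is no genuine difficulty here; the only thing to keep straight is which of the two implications contained in Lemma \ref{two/three} is being used at each place — ``the middle term is negligible once both outer terms are'', used for the sequences $0\to B_i\to Z_i\to Z_i/B_i\to 0$ and for the final sequence, versus ``the quotient is negligible once the submodule and the total are'', used for the sequences $0\to Z_i\to M_i\to B_{i+1}\to 0$ — together with the routine bookkeeping of the induction on $i$.
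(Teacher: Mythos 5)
Your proof is correct, and it is the routine splitting-into-short-exact-sequences argument that the paper evidently has in mind, since it marks this Lemma as immediate (the $\qed$ appears with no proof given). The bookkeeping — $B_0=0$, $Z_q=M_q$, running the induction on $B_i$ via the two families of short exact sequences and both directions of Lemma \ref{two/three} — is exactly right.
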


\section{Picard graded Cox rings}\label{section 4}
If $V$ is a finitely generated projective $\k$-module, we denote its dual by $V^\#$.
For $1\leq s\le N$, let $\Gr(s)$ be the Grassmannian scheme over $\k$ parametrizing rank
$s$ subspaces of the 
dual $\nabla(\varpi_1)^\#$ of the
defining representation of $\GL_N$. If one does base change to an algebraically closed field $\F$,
then one gets the Grassmannian variety $\Gr(s)_{\F}$ over $\F$ parametrizing 
$s$-dimensional subspaces of the 
dual $\nabla(\varpi_1)^\#$ of the
defining representation of $\GL_N$. We think of $\Gr(s)$ as a constant family parametrized by $\Spec(\k)$.
Note that we often suppress the base ring $\k$ in the notation.
The point is that we will argue in a manner which minimizes the need to pay attention
to the base ring. 
Let $\cO(1)$ denote the usual
ample sheaf on $\Gr(s)$, corresponding with
a generator of the Picard group of $\Gr(s)_{\F}$. 
We wish to view it as
a $G$-equivariant sheaf. To this end consider $G=\GL_N$ with its parabolic subgroup
$P=\{\;g\in G\mid g_{ij}=0\mbox{ for }i>N- s,~j\leq N- s\;\}$ and identify $\Gr(s)$
with $G/P$. Then a $G$-equivariant vector bundle is the associated bundle
of its fiber over $P/P$, where this fiber is a $P$-module. For the line bundle
$\cO(1)$ we let $P$ act by the weight $\varpi_N-\varpi_{N-s}$ on the fiber
over $P/P$. With this convention
$\Gamma(\Gr(s),\cO(1))$ is the Schur module $\nabla(\varpi_s)$, cf.\
\cite[II.2.16]{Jantzen}. 
More generally, for $n\geq0$ one has $\Gamma(\Gr(s),\cO(n))=\nabla(n\varpi_s)$.
So $$A\langle s\rangle =\oplus_{n\geq0}\Gamma(\Gr(s),\cO(n))$$
is a good $G\Z$-algebra. Recall that $\Theta=\Z^r$.
Let $1\leq s_i\le N$ be given for $1\leq i\leq r$. Repetitions are definitely allowed.
Then the Cox ring $A\langle s_1\rangle \otimes\cdots\otimes A\langle s_r\rangle $ of 
$\Gr(s_1)\times\cdots\Gr(s_r)$ is a good \GD-algebra.
We put $C=C_0\otimes A\langle s_1\rangle \otimes\cdots\otimes A\langle s_r\rangle $,
where $C_0$ is a polynomial algebra on finitely many generators over $\k$ with trivial
$G$-action. Then $C$ is also a good \GD-algebra. Here $G$ may be either $\SL_N$ or $\GL_N$.
We wish to prove

\begin{Proposition}\label{Picardgraded}
Every \CGD-module is negligible.
\end{Proposition}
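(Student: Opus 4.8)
The plan is to prove Proposition \ref{Picardgraded} by induction on $r$, the number of Grassmannian factors, using the functorial resolution of the ideal of the diagonal from \cite{twisted} to descend one factor at a time. The base case $r=0$ is essentially trivial: then $C=C_0$ is a polynomial ring with trivial $G$-action, $M$ is a noetherian $C_0$-module with trivial $G$-action, and $M\otimes_\k\k[\SL_N/U]$ is a direct sum of copies of $\k[\SL_N/U]$ (up to the module structure over $C_0$), which is $\SL_N$-acyclic since it is a direct sum of costandard modules; likewise $M\otimes_\k\St_r\otimes_\k\St_r\otimes_\k\k[\SL_N/U]$ has good filtration as $\St_r\otimes_\k\St_r\otimes_\k\k[\SL_N/U]$ does. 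So $M$ is negligible.

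For the inductive step, suppose the result holds for fewer than $r$ factors, and let $X=\Gr(s_1)\times\cdots\times\Gr(s_r)$ with Cox ring $C$. Write $Z=\Gr(s_r)$ for the last factor and $X'$ for the product of the first $r-1$ factors, so $X=X'\times Z$. A \CGD-module $M$ corresponds to a coherent sheaf on $X$ that is suitably equivariant and multigraded. The functorial resolution of the diagonal in $Z\times Z$ from \cite{twisted} — whose terms are built from Schur functors applied to the tautological bundles, hence have good filtration fiberwise — can be pulled back and combined with the identity on $X'$ to produce, after taking sections in the appropriate multidegrees, a finite resolution of $M$ whose terms are $\CGD$-modules that (via the Künneth/base-change structure of the Cox ring) are built from modules with one fewer Grassmannian direction tensored with modules with good filtration. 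More precisely, each term of the resolution should be expressible, up to modules already known to be negligible, in terms of $C'$-modules — where $C'$ is the Cox ring of $X'$ with an extra polynomial factor absorbing the $\nabla(n\varpi_{s_r})$'s — tensored over $\k$ with modules having good filtration. By Lemma \ref{goodGgood} and the remarks preceding it (the tensor product of a module with good filtration and one with good Grosshans filtration again has good Grosshans filtration; good filtration dimension does not increase under such tensoring), these terms are negligible by the inductive hypothesis. Then Lemma \ref{two/three} and the complex version of it let us conclude that $M$ itself is negligible, by running down the resolution.

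The main obstacle will be the bookkeeping that keeps everything simultaneously \emph{noetherian}, \emph{correctly multigraded over $\Pic(X)$}, and \emph{$G$-equivariant} while passing through the resolution of the diagonal; in particular one must check that the terms produced really are $\CGD$-modules for a Cox ring with one fewer Grassmannian factor (plus harmless polynomial variables), and that the grading and the $\GL_N$- versus $\SL_N$-actions are respected at each stage. A secondary subtlety, inherited from the setting over a general noetherian $\k$, is that the sheaves involved need not be flat over $\k$, so one cannot freely invoke base change to a field; instead one must argue with the functorial, characteristic-free resolution of \cite{twisted} directly, exactly as the introduction indicates. Once the resolution is in place and its terms identified, the reduction via the two-and-three Lemma and Lemma \ref{goodGgood} is routine.
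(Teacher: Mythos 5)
Your inductive-step outline matches the paper's strategy well: resolve the diagonal in $Z\times Z$ functorially via the Koszul complex of \cite{twisted}, pull back to $Y\times Z\times Z$, tensor with $M^\sim$, take a high twist, push forward to $X=Y\times Z$, and use the fact that the terms involve $S^\alpha(\cS)\boxtimes\cG$, whose sections contribute a good-filtration factor, so that Lemma~\ref{goodGgood}, Lemma~\ref{two/three}, and its complex version let you run down the complex. The details you defer as ``bookkeeping'' (noetherianness of $H^t_*(\cM)$, comparing $M$ with $\Gamma_*(M^\sim)$ in high degrees, the projection formula and $(\pi_{12})_*\circ\pi_{13}^*=f^*\circ f_*$) are genuinely what the paper carries out, so that part is fine in spirit.

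However, there is a genuine and consequential error in your base case. When $r=0$, a $CG$-module is a noetherian $C_0$-module $M$ with a \emph{compatible} $G$-action, which only forces the $G$-action on $M$ to be $C_0$-linear; it does \emph{not} force $G$ to act trivially on $M$. Your claim that ``$M$ is a noetherian $C_0$-module with trivial $G$-action'' is simply false, and with it the claim that the base case is essentially trivial. In the paper, this base case is a substantial lemma in its own right: one argues by induction on the highest weight of $M$, approximating $M$ (using \cite[Proposition 21]{FvdK}) by a Weyl module $\Delta(\mu)_\Z\otimes_\Z M_\mu$ whose comparison with $\nabla(\mu)_\Z\otimes_\Z M_\mu$ has kernel and cokernel of lower highest weight, and then invoking the universal coefficient theorem to reduce to computations over a field. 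For the Steinberg part one further needs Wang's theorem together with the argument of \cite[\S 3]{CPSvdK} that $V_\F\otimes\nabla_\F(r\rho)\otimes\nabla_\F(r\rho)$ has good filtration once $r\rho-\mu$ is dominant for all weights $\mu$ of $V$. Also note that your stated reluctance to ``invoke base change to a field'' runs contrary to the paper's actual strategy for this lemma: the universal coefficient theorem is precisely the mechanism used to transport field-level vanishing back to $\Z$ and hence to $\k$. Until you repair the base case, the induction has no foundation.
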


The proof will be by induction on the rank $r$ of $\Theta$.
It will be finished in \ref{endproofPicardgraded}. Notice that the property of being negligible depends only on
the \hbox{$\SL_N$-module} structure. In particular, a shift in the grading makes no difference.
As base of the induction we use

\begin{Lemma}
A \CG-module $M$ that is noetherian over $C_0$ is negligible. 
\end{Lemma}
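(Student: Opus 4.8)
The plan is to reduce everything to the already-quoted cohomological criterion: a $G$-module is negligible once it has finite Grosshans filtration dimension and, after tensoring with $\St_r\otimes_\k\St_r$ for $r\gg0$, acquires good Grosshans filtration; equivalently $H^i(\SL_N,(-)\otimes_\k\k[\SL_N/U])$ must vanish for $i\gg0$, and $H^1(\SL_N,(-)\otimes_\k\St_r\otimes_\k\St_r\otimes_\k\k[\SL_N/U])$ for $r\gg0$. Since $M$ is noetherian over $C_0$ and $G$ acts trivially on $C_0$, the $\SL_N$-module structure on $M$ is quite restricted, and that is what I want to exploit.

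First I would observe that $C_0$ is a finitely generated $\k$-algebra with trivial $G$-action, so as a $\k$-module it is a (countable) union of finitely generated $\k$-submodules, and $M$, being noetherian over $C_0$, is likewise a union of its finitely generated $\k$-submodules; but more to the point, $M$ is a noetherian module over a noetherian ring on which $G$ acts trivially, so $M$ itself carries \emph{only} the trivial type of $\SL_N$-action in the following sense: every element of $M$ lies in a finitely generated $C_0$-submodule, hence (shrinking) in a finitely generated $\k$-submodule, which by rationality of the $G$-action is contained in a $G$-stable finitely generated $\k$-submodule $M'$. Each such $M'$ is a noetherian $\k$-module with a rational $G$-action; I would then want to say that such a module is negligible, and that negligibility passes to the union. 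The union step is handled because cohomology of $\SL_N$ commutes with filtered colimits of modules (Hochschild cohomology is computed by the rational $\k[G]$-resolution, which is a filtered-colimit-preserving construction), so $H^i(\SL_N,M\otimes_\k\k[\SL_N/U])=\varinjlim H^i(\SL_N,M'\otimes_\k\k[\SL_N/U])$; if each term of the colimit vanishes for $i>d$ with a \emph{uniform} $d$, we are done, and uniformity will come from the Noetherian hypothesis (finitely many $C_0$-generators, hence a single $M'$ suffices).

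Thus the crux reduces to: a noetherian $\k$-module $M'$ with rational $G$-action is negligible. Here I would argue by reduction to the base field. Negligibility is detected on geometric fibers in the following sense — one has, for $\k$ noetherian and $M'$ finitely generated, that $H^i(\SL_N,M'\otimes_\k\k[\SL_N/U])$ is a finitely generated $\k$-module (this is essentially invariant theory, \cite[Theorem 12, Theorem 9]{FvdK}, applied with $A=\k[\SL_N/U]$), and it is supported on $\Spec(\k)$ in a closed set; after base change to residue fields $\F$ one lands in the classical situation over a field, where a finitely generated $G$-module automatically has finite good filtration dimension, so $H^i$ vanishes for $i$ past the dimension of $\SL_N/U$, uniformly in $\F$. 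A standard semicontinuity/Nakayama argument — the vanishing of a finitely generated $\k$-module after every base change to a residue field forces the module itself to vanish — then kills $H^i(\SL_N,M'\otimes_\k\k[\SL_N/U])$ for $i$ large. The same argument applied to $M'\otimes_\k\St_r\otimes_\k\St_r$, using that over a field a finitely generated module tensored with $\St_r\otimes_\k\St_r$ has good filtration for $r\gg0$ (the classical Donkin/Mathieu-type input, valid fiberwise), gives the vanishing of the $H^1$ term for $r\gg0$; one needs a uniform bound on $r$ over the fibers, which again follows because the relevant obstruction is a finitely generated $\k$-module.

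The main obstacle I anticipate is precisely this \emph{uniformity in the base}: making sure that the bound on $i$ (for the first vanishing) and the bound on $r$ (for the second) can be chosen independently of the residue field one passes to. For a \emph{single} finitely generated $M'$ this should follow from the finite generation of the cohomology modules over $\k$ together with the fact that $\Spec(\k)$ is noetherian, so that the support of an obstruction is closed and one can run a noetherian induction on $\Spec(\k)$; I would also need to check that tensoring with the (infinite-dimensional but filtered) module $\k[\SL_N/U]$ is harmless, which it is because $\k[\SL_N/U]$ is a filtered union of $G$-submodules with good filtration that are finitely generated and flat over $\k$ (being direct sums of $\nabla(\lambda)$'s with finite multiplicity in each degree, by \cite{Kempf Ramanathan}), so Lemma \ref{goodGgood} and the long exact sequence reduce to finitely many $\nabla(\lambda)$ at a time. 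Once uniformity is in hand, the colimit argument over all $G$-stable finitely generated $M'\subseteq M$ finishes the proof.
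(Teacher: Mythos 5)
Your proposal identifies the right target (reduce the vanishing statements to fields, and worry about uniformity in the base), but the two load-bearing steps are not actually supplied, and the paper's own proof does something structurally different in each place.

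The main gap is the base-change step. You want to conclude vanishing of $H^i(\SL_N,M'\otimes_\k\k[\SL_N/U])$ from vanishing of the analogous groups over every residue field $\F$ of $\k$, by ``semicontinuity/Nakayama.'' Nakayama does let you pass from $X\otimes_\k\kappa(\mathfrak p)=0$ for all $\mathfrak p$ to $X=0$ for a finitely generated $\k$-module $X$; but you have not identified $H^i(\SL_{N,\k},M'\otimes_\k\k[\SL_N/U])\otimes_\k\kappa(\mathfrak p)$ with $H^i(\SL_{N,\kappa(\mathfrak p)},M'_{\kappa(\mathfrak p)}\otimes\kappa(\mathfrak p)[\SL_N/U])$. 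Rational cohomology does not commute with base change; in general one has a universal-coefficient spectral sequence with higher $\Tor$ terms, and over a merely noetherian $\k$ these do not collapse. The paper sidesteps this entirely by working over $\ZZ$, where the universal coefficient theorem \cite[A.X.4.7]{Bourbaki} is a clean two-term exact sequence because $\ZZ$ is a PID: one first shows that the finitely generated abelian groups $H^j(G_\ZZ,V_\ZZ\otimes\nabla_\ZZ(r\rho)\otimes\nabla_\ZZ(r\rho)\otimes\nabla_\ZZ(\nu))$ vanish (using Wang and \cite[\S3]{CPSvdK} fiberwise over the residue fields of $\ZZ$, plus the clean UCT to descend from the fibers to $\ZZ$), and only then tensors the flat $\ZZ$-complex with the $\k$-module of coefficients. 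Your argument runs over $\k$ from the start, and that is exactly where the method breaks down without further hypotheses (such as $\k$ hereditary).

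The second issue is the uniformity/colimit step. Saying ``a single $M'$ suffices'' because there are finitely many $C_0$-generators is not by itself an argument: that $M'$ is not all of $M$, and you would still need the bound on the cohomological degree (and on $r$) to be uniform over the whole directed system. The structural fact that makes this work — which your write-up never isolates — is that $G$ acts trivially on $C_0$, so the set of $T$-weights of $M$ is finite and coincides with that of the finitely generated $\k$-submodule $M'$ generated by the $C_0$-generators; the bounds therefore depend only on this finite set of weights. The paper makes precisely this the crux (``it only depends on the weights of $M$ how large $i$ must be taken'') and then runs an induction on the highest weight: by \cite[Proposition 21]{FvdK} the map $\Delta_\ZZ(\mu)\otimes_\ZZ M_\mu\to M$ has kernel and cokernel of lower highest weight, and $\nabla_\ZZ(\mu)\otimes_\ZZ M_\mu$ is handled by the UCT over $\ZZ$ as above; $r$ is chosen so that $r\rho-\mu$ is dominant for every weight $\mu$ that occurs, giving an explicit uniform bound. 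A ``noetherian induction on $\Spec(\k)$'' does not substitute for this, because the obstruction lives in a mixed-characteristic setting where you have not controlled the comparison between $H^*$ over $\k$ and $H^*$ over its residue fields.
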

\begin{proof}We may view $M$ as $C_0G$-module and forget that $M$ is a $C$-module. 
Say $G=\SL_N$.
First let us show that $H^i(G,M\otimes_\k \k[G/U])$ vanishes for $i\gg0$.
We claim that it only depends on the weights of $M$ how large $i$ must be taken.
Say all weights of $M$ have length at most $R$. We argue by induction on the highest weight of $M$.
To perform the induction, we first choose a total order on weights of length at most $R$, 
that refines the usual dominance order of \cite[II.1.5]{Jantzen}. 
Initiate the induction with $M=0$. For the induction step,
consider the highest weight $\mu$ in $M$ and let $M_{\mu}$ be its weight space. We let $G$ act trivially on $M_{\mu}$. 
Now, by  \cite[Proposition 21]{FvdK}
$\Delta(\mu)_{\Z}\otimes_{\Z}M_{\mu}$ maps to $M$, and the kernel and the cokernel of this map have lower highest weight. 
So we still need to see that $\Delta(\mu)_{\Z}\otimes_{\Z}M_{\mu}$ itself has the required property.
But $\nabla(\mu)_{\Z}\otimes_{\Z}M_{\mu}$ has it by the universal coefficient theorem \cite[A.X.4.7]{Bourbaki}, 
and the natural map from $\Delta(\mu)_{\Z}\otimes_{\Z}M_{\mu}$ to 
$\nabla(\mu)_{\Z}\otimes_{\Z}M_{\mu}$ also
has  kernel and cokernel of  lower highest weight. 
All in all there is an effective bound for $i$ in terms of the weight structure of $M$.

Now we still have to show that $H^j(G,M\otimes_\k \nabla(r\rho)\otimes_\k \nabla(r\rho)\otimes_\k \k[\SL_N/U])$  vanishes for $j>0$
when $r$ is large enough. 
First let $V$ be a $C_0G$-module that is obtained by tensoring a flat noetherian $G_{\Z}$-module 
$V_{\Z}$ with a $C_0$-module 
on which $G$ acts trivially. 
Then to show that $V$ has the required property we wish to invoke the universal coefficient theorem \cite[A.X.4.7]{Bourbaki}
again. We take $r$ so large that $r\rho-\mu$ is dominant for all weights $\mu$ of $V$.
Look at the $H^j(G_\Z,V_\Z\otimes\nabla_\Z(r\rho)\otimes\nabla_\Z(r\rho)\otimes\nabla_\Z(\nu))$ for $j>0$,  $\nu$ dominant.
They are noetherian $\Z$-modules.
The corresponding groups vanish over a field $\F$. Indeed in view of \cite{Wang} the reasoning
in  \cite[\S 3]{CPSvdK} shows
 that $V_\F\otimes\nabla_\F(r\rho)\otimes\nabla_\F(r\rho)$ has good filtration because 
$r$ is so large that $r\rho-\mu$ is dominant for all weights $\mu$ of $V$.
But then the above $H^j(\dots)$ must vanish over $\Z$ for such $r$.
So $H^j(G,V\otimes\nabla(r\rho)\otimes\nabla(r\rho)\otimes\k[\SL_N/U])$  vanishes for $j>0$ by the universal coefficient theorem.
Now one may argue by induction on the highest weight of $M$ again. \end{proof} 

\begin{Remark}\label{universal coefficient reasoning}
 This kind of reasoning with the universal coefficient theorem is needed in many places to extend facts from fields to our 
base ring $\k$. We may use it tacitly.
\end{Remark}

\begin{Notation}
For $1\leq q\leq r$ we denote by $C^{\widehat  q}$ the subring 
$\bigoplus_{I_q=0}C_I$.
\end{Notation}

We further assume $r\geq1$. The inductive hypothesis then gives:
\begin{Lemma}\label{inductiveassumption}Let $1\leq q\leq r$.
If the \CGD-module $M$ is noetherian over the subring $C^{\widehat  q}$,
then $M$ is negligible.
\end{Lemma}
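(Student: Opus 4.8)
The plan is to deduce the Lemma from the inductive hypothesis, that is, from Proposition~\ref{Picardgraded} for value groups of rank $r-1$. The key point is that the subring $C^{\widehat q}=\bigoplus_{I_q=0}C_I$ is again a good \GD-algebra, now for the value group $\Theta^{\widehat q}=\Z^{r-1}$ obtained by dropping the $q$-th coordinate. Indeed $A\langle s_q\rangle_0=\Gamma(\Gr(s_q),\cO)=\k$, so
$$C^{\widehat q}=C_0\otimes A\langle s_1\rangle\otimes\cdots\otimes A\langle s_{q-1}\rangle\otimes A\langle s_{q+1}\rangle\otimes\cdots\otimes A\langle s_r\rangle ,$$
which has exactly the shape of the rings considered in \S\ref{section 4}, with one tensor factor fewer; in particular it is a noetherian $\k$-algebra with good filtration.

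Next I would split $M$ according to its $q$-th degree. Since every homogeneous component of $C^{\widehat q}$ has vanishing $q$-th degree, the $C^{\widehat q}$-module structure preserves the $q$-th degree, so each $M^{(n)}:=\bigoplus_{I\colon I_q=n}M_I$ is a $C^{\widehat q}$-submodule and $M=\bigoplus_{n\in\Z}M^{(n)}$ as a $C^{\widehat q}$-module. Because $M$ is noetherian over $C^{\widehat q}$, only finitely many $M^{(n)}$ are nonzero, and each $M^{(n)}$, being a direct summand of $M$ over the noetherian ring $C^{\widehat q}$, is itself noetherian over $C^{\widehat q}$. It inherits a compatible $G$-action and a $\Theta^{\widehat q}$-grading by $G$-submodules compatible with the grading of $C^{\widehat q}$; hence $M^{(n)}$ is a \CGD-module for the rank $r-1$ datum $(C^{\widehat q},\Theta^{\widehat q})$.

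By the inductive hypothesis, each $M^{(n)}$ is negligible. Now $M\cong\bigoplus_{n}M^{(n)}$ as $\SL_N$-modules, a finite direct sum, and negligibility depends only on the $\SL_N$-module structure; since a finite direct sum of negligible modules is negligible --- either directly, because group cohomology commutes with finite direct sums and one takes the maximum of the finitely many thresholds, or by iterating Lemma~\ref{two/three} on the split sequences $0\to M'\to M'\oplus M''\to M''\to 0$ --- we conclude that $M$ is negligible.

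The argument is essentially bookkeeping. The only points that deserve care are the verification that $C^{\widehat q}$ is genuinely a good \GD-algebra of rank $r-1$, so that the inductive hypothesis applies verbatim, and the finiteness of the set of nonzero graded pieces $M^{(n)}$ --- it is precisely here that the hypothesis that $M$ be noetherian over $C^{\widehat q}$, and not merely over $C$, is indispensable.
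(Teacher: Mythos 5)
Your proof is correct and spells out what the paper leaves implicit: the paper offers no argument beyond the phrase ``The inductive hypothesis then gives.'' Your decomposition $M=\bigoplus_n M^{(n)}$ by $q$-th degree into finitely many $C^{\widehat q}G\Theta^{\widehat q}$-modules, with the finiteness of the sum coming precisely from noetherianity of $M$ over $C^{\widehat q}$ and the identification of $C^{\widehat q}$ as a rank $r-1$ good \GD-algebra, is exactly the intended bookkeeping behind the inductive step.
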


\section{Coherent sheaves}
We now have $\Proj(C)=\Spec(C_0)\times \Gr(s_1)\times\cdots\Gr(s_r)$.
Call the projections of $\Proj(C)$ onto its respective factors 
$\pi_0$, \ldots , $\pi_r$.
For $I\in\Theta$ define the coherent sheaf 
$\cO(I)=\bigotimes_{i=1}^r\pi_i^*(\cO(I_i))$.
So $C=\bigoplus_{I\geq0}\Gamma(\Proj(C),\cO(I))$.
For a \CGD-module $M$ let $M^\sim$ be the coherent
$G$-equivariant \cite[II F.5]{Jantzen} sheaf on $\Proj(C)$
constructed as in \cite[II 5.11]{Hartshorne} from the $\Z$-graded
module $\diag(M):=\bigoplus_{I\in\diag(\Theta)}M_I$. 

Conversely, to a coherent sheaf $\cM$ on $\Proj(C)$, we associate
the $\Theta$-graded $C$-module 
$$\Gamma_*(\cM)=\bigoplus_{I\geq0}\Gamma(\Proj(C),\cM(I)),$$
where $\cM(I)=M\otimes\cO(I)$.
We also put $H_*^t(\cM)=\bigoplus_{I\geq0}H^t(\Proj(C),\cM(I))$.

\begin{Notation}\label{exterior}
On a product like $X\times Y$ an exterior tensor product
$\pi_1^*(\cF)\otimes\pi_2^*(\cM)$ is denoted $\cF\boxtimes\cM$.
\end{Notation}

\begin{Lemma}[K\"unneth]\label{Kunneth}
 Let $X$ and $Y$ be flat projective schemes over an affine scheme $S=\Spec(R)$. Let $\cF$ be a quasicoherent sheaf
on $X$ and $\cG$ one on $Y$. Assume $\cF$, $\cG$ are flat over $S$ and that $\Gamma(X,\cF)$ is flat over $R$.
If $H^i(X,\cF)$
vanishes for $i>0$, 
then $H^i(X\otimes_S Y,\cF\boxtimes \cG)=\Gamma(X,\cF)\otimes_R  H^i(Y, \cG)$
for all $i$.
\end{Lemma}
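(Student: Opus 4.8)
The plan is to reduce to the classical Künneth formula in the derived category of $\cO_S$-modules, using the hypotheses to collapse all the relevant spectral sequences and Tor terms. First I would replace $\cF$ and $\cG$ by Čech complexes (or bounded complexes of flat/injective objects) computing $R\Gamma(X,\cF)$ and $R\Gamma(Y,\cG)$. Because $X$ and $Y$ are flat over $S=\Spec R$ and $\cF$, $\cG$ are $S$-flat, the derived pushforwards $R\Gamma(X\times_S Y,\cF\boxtimes\cG)$ are computed by the (derived) tensor product over $R$: one has the base-change/Künneth quasi-isomorphism
$$
R\Gamma(X\times_S Y,\cF\boxtimes\cG)\;\simeq\;R\Gamma(X,\cF)\otimes^{\mathbf L}_R R\Gamma(Y,\cG).
$$
This is standard flat base change applied to the two projections; flatness of $X/S$, $Y/S$ and of the sheaves is exactly what makes the external tensor product $\cF\boxtimes\cG$ have the right derived pushforward, with no correction terms.

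Next I would feed in the two remaining hypotheses. Since $H^i(X,\cF)=0$ for $i>0$, the complex $R\Gamma(X,\cF)$ is quasi-isomorphic to the single module $\Gamma(X,\cF)$ placed in degree $0$. Since moreover $\Gamma(X,\cF)$ is flat over $R$, the derived tensor product $\Gamma(X,\cF)\otimes^{\mathbf L}_R R\Gamma(Y,\cG)$ degenerates to the ordinary tensor product $\Gamma(X,\cF)\otimes_R R\Gamma(Y,\cG)$, and taking cohomology in degree $i$ yields
$$
H^i(X\times_S Y,\cF\boxtimes\cG)\;=\;\Gamma(X,\cF)\otimes_R H^i(Y,\cG)
$$
for all $i$, which is the claim. (One can also run this concretely with the Čech bicomplex: the row filtration gives a spectral sequence with $E_1$-page $H^p(X,\cF)\otimes_R C^q(Y,\cG)$ by flatness of the Čech terms over $R$, which collapses to the $p=0$ row by the vanishing hypothesis, giving the result after taking $H^q$; but the derived-category formulation keeps the bookkeeping minimal.)

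The main obstacle — and really the only point requiring care — is justifying the external Künneth quasi-isomorphism at the level of complexes over a general noetherian ring $R$, i.e. without properness or coherence assumptions beyond those stated. The clean route is to note that $X\times_S Y$ is covered by products $U\times_S V$ of affines with $U\subseteq X$, $V\subseteq Y$ affine open, so a product of Čech covers computes cohomology; on such a product the sections of $\cF\boxtimes\cG$ are $\Gamma(U,\cF)\otimes_R\Gamma(V,\cG)$, and one invokes flatness of $\cG$ (hence of each $\Gamma(V,\cG)$ appearing after the first collapse) to commute $\Gamma(X,\cF)\otimes_R(-)$ past the Čech differentials in the $Y$-direction. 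If one prefers to avoid even this, the statement is exactly \cite[III.6.7 and its flat-base-change refinement]{Hartshorne} applied to a Noetherian affine base, and I would simply cite that, since the hypotheses here (flat projective $X,Y$ over affine $S$, $S$-flat sheaves, top-degree vanishing and $R$-flatness of $\Gamma(X,\cF)$) are precisely tailored to make that reference applicable with every Tor and every higher differential vanishing.
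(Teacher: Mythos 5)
Your argument is correct, but it takes a genuinely different route from the paper's: the paper dispatches the lemma with a one-line citation to Kempf's elementary treatment of quasi-coherent cohomology (\cite[Theorem 14]{Kempf}), whereas you give a self-contained proof. Of your two versions, the concrete \v{C}ech bicomplex argument is the reliable one and is essentially complete: since $X$ and $Y$ are separated, a product of affine covers computes $H^*(X\times_S Y,\cF\boxtimes\cG)$ via the total complex of $C^\bullet(X,\cF)\otimes_R C^\bullet(Y,\cG)$; flatness of $\cG$ over $S$ makes each $C^q(Y,\cG)$ a flat $R$-module, so the first spectral sequence has $E_1^{p,q}=H^p(X,\cF)\otimes_R C^q(Y,\cG)$, which collapses to the $p=0$ row by the vanishing hypothesis; flatness of $\Gamma(X,\cF)$ then lets you pull it out of $H^q$. (Be explicit that it is flatness of $\cG$ --- not of $\cF$ --- that is needed at the $E_1$ stage.) Two caveats on the slicker framing: the displayed quasi-isomorphism $R\Gamma(X\times_S Y,\cF\boxtimes\cG)\simeq R\Gamma(X,\cF)\otimes^{\mathbf L}_R R\Gamma(Y,\cG)$ is not ``flat base change applied to the two projections'' alone --- one also needs a projection formula for $Rq_*$, so the derived-category route is doing more than you advertise; and the closing citation to \cite[III.6.7]{Hartshorne} is misplaced, as that result concerns $\mathcal Ext$ sheaves and projective embeddings, not K\"unneth. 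Neither caveat affects the validity of the \v{C}ech argument, which stands on its own; it is simply longer than the paper's appeal to Kempf, whose Theorem~14 is designed precisely to package this bookkeeping once and for all.
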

\begin{proof}
Use \cite[Theorem 14]{Kempf}.
\end{proof}

\begin{Lemma}
If $\cM$ is a $G$-equivariant coherent sheaf  on $\Proj(C)$,
then the  $H_*^t(\cM)$ are \CGD-modules.
\end{Lemma}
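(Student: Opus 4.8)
The plan is to verify, for each $t$, the three conditions defining a \CGD-module for $M:=H_*^t(\cM)=\bigoplus_{I\geq0}H^t(\Proj(C),\cM\otimes\cO(I))$; two of them are formal and only the third has real content. The degree-$I$ component of $M$ is $H^t(\Proj(C),\cM\otimes\cO(I))$, and for $f\in C_J=\Gamma(\Proj(C),\cO(J))$ multiplication by $f$ is cup product with $f$, which lands in $H^t(\Proj(C),\cM\otimes\cO(I+J))$; this makes $M$ a $\Theta$-graded $C$-module with $C_IM_J\subseteq M_{I+J}$ automatic, associativity being the usual naturality of cup product. Since $\Proj(C)=\Spec(C_0)\times\Gr(s_1)\times\cdots\times\Gr(s_r)$ is a $G$-scheme, the $\cO(I)$ are $G$-equivariant, and $\cM$ is $G$-equivariant by hypothesis, each $H^t(\Proj(C),\cM\otimes\cO(I))$ is naturally a rational $G$-module by \cite[I.4, II F.5]{Jantzen}, the grading is by $G$-submodules, and the structure map $C\otimes_\k M\to M$ is $G$-equivariant. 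So the only real point is that $M$ is noetherian over $C$; as $C$ is a noetherian ring, this is the same as $M$ being finitely generated.

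For finite generation I would run the standard Serre argument, adapted to the multigrading, the essential preliminary being that the twisting sheaves $\cO(-nE)$ with $E=(1,\dots,1)$ and $n\geq0$ have all $H_*^t$ noetherian over $C$. One has $H_*^t(\cO(-nE))=\bigoplus_{J\geq-nE}H^t(\Proj(C),\cO(J))$; working over the affine base $\Spec(C_0)$ and applying the Künneth Lemma \ref{Kunneth} to peel off the Grassmannian factors carrying nonnegative twists (whose relative global sections are flat over $C_0$), together with the universal-coefficient reasoning of Remark \ref{universal coefficient reasoning} for the remaining factors and the classical description of line-bundle cohomology on a single Grassmannian, one sees that $H_*^t(\cO(-nE))$ is a finite sum of tensor products over $\k$ of noetherian modules over $C_0$ and the $A\langle s_i\rangle$. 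What makes this go through is that over the positive orthant $I\geq0$ the twist $J_i=I_i-n$ is bounded below by $-n$, so in each Grassmannian direction the higher cohomology occurs only for finitely many twists $J_i\in[-n,-1]$, while the degree-zero part contributes a shifted copy of $A\langle s_i\rangle$. Hence $H_*^t(\cO(-nE))$, and $H_*^t(W\otimes_\k\cO(-nE))$ for a finite-dimensional $\k$-module $W$, is noetherian over $C$ for every $t$.

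For a general coherent $\cM$ I would then descend on $t$. Since $\cO(E)$ is ample on $\Proj(C)$, Serre vanishing yields $n\gg0$ and a surjection $W\otimes_\k\cO(-nE)\twoheadrightarrow\cM$ with $W$ finite-dimensional; write $\cE$ for its coherent kernel. Twisting by the $\cO(I)$ and taking cohomology gives an exact sequence of $C$-modules $H_*^t(W\otimes_\k\cO(-nE))\to H_*^t(\cM)\to H_*^{t+1}(\cE)\to H_*^{t+1}(W\otimes_\k\cO(-nE))$. For $t$ above the relative dimension of $\Proj(C)$ over $\Spec(C_0)$ every term vanishes, which starts the induction; and if $H_*^{t+1}$ of every coherent sheaf is already known noetherian over $C$, then $H_*^t(\cM)$ is an extension of a submodule of a noetherian $C$-module by a quotient of a noetherian $C$-module, hence noetherian. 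This completes the proof. The main obstacle is precisely this finite generation over the \emph{multigraded} ring $C$: the single-graded statement for $\Proj(\diag C)$ over $\diag(C)$ does not transfer directly, since $C$ is not module-finite over $\diag(C)$, so one genuinely has to exploit the boundedness of the negative directions over the positive orthant, which is exactly what drives the Künneth computation in the previous step.
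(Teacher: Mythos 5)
Your proof is correct and follows essentially the same route as the paper: establish that $H_*^t$ of line bundles (the paper uses $\cO(iE)^{\oplus a}$, you use $W\otimes_\k\cO(-nE)$, the same thing) is noetherian over $C$ via Kempf vanishing plus the K\"unneth Lemma~\ref{Kunneth}, then run descending induction on $t$ using a surjection from such a sheaf and the resulting long exact sequence. You spell out the formal $G$-module and cup-product structures and the boundedness of the negative twist range more explicitly, but the mathematical content matches the paper's proof.
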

\begin{proof}
So we have to show that $H_*^t(\cM)$ is noetherian as a
$C$-module. Observe that $\Proj(C)$ has a finite affine cover, so that $H_*^t(\cM)$
vanishes for $t$ large. So we argue by descending
induction on $t$. Assume the result for all larger values of $t$.
By Kempf vanishing 
$\bigoplus_{q\geq0}\bigoplus_{n\geq0}H^q(Gr(s),\cO(i+n))$
is a noetherian $\bigoplus_{n\geq0}\Gamma(Gr(s),\cO(n))$
module, for any $i\in\Z$. Similarly, by Lemma \ref{Kunneth} we see that
$\bigoplus_{q\geq0}H_*^q(\Proj(C),\cO(I))$ is a noetherian $C$-module
for any $I\in \Theta$, generated by the $H^q(\Proj(C),\cO(I+J))$ with $0\leq J_j\leq |I_j|$.
Now write $\cM$ as a quotient of some $\cO(iE)^{\oplus a}$ and use
the long exact sequence
$$\cdots\to H_*^t(\cO(iE)^{\oplus a})\to H_*^t(\cM)\to H_*^{t+1}
(\dots)\to\cdots$$
to finish the induction step.
\end{proof}

\begin{Notation}
If $M$ is a $\Theta$-graded module and $I\in\Theta$, then $M(I)$ is the
$\Theta$-graded module with $M(I)_J=M_{I+J}$.
Further $M_{\geq I}$ denotes $\bigoplus_{J\geq I}M_J$.
\end{Notation}

\begin{Lemma}\label{diagtest}
If $I\geq0$, then the ideal $C_{\geq I}$ of $C$ is generated by $C_I$.\\
If $M$ is a \CGD-module with $M_{nE}=0$ 
for $n\gg 0$, then  $M_{\geq nE}=0$ for $n\gg 0$.
\end{Lemma}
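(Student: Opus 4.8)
The plan is to handle the two assertions in turn, the first being the engine for the second. For the first assertion, recall that $C$ is generated over $C_0$ by the pieces $C_{e_q}$, $1\le q\le r$. So any homogeneous element of $C_J$ with $J\ge I$ is a $C_0$-linear combination of products $c_1\cdots c_m$ with each $c_k\in C_{e_{q(k)}}$ and $\sum_k e_{q(k)}=J$. Since $J\ge I\ge0$, I can split the multiset $\{q(1),\dots,q(m)\}$ into a sub-multiset whose exponent vector is exactly $I$ (using $J_q\ge I_q$ coordinatewise) and a remainder; grouping the corresponding factors exhibits the product as (an element of $C_I$) times (an element of $C_{J-I}\subseteq C$). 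Hence $C_{\ge I}=C_I\cdot C$, i.e.\ $C_{\ge I}$ is generated by $C_I$ as an ideal. (Equivalently one notes $C_{\ge I}=C_{\ge e_{q_1}}\cap\cdots$ reduces to the case $I=e_q$, where $C_{\ge e_q}$ is visibly the ideal generated by $C_{e_q}$.)

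For the second assertion, suppose $M_{nE}=0$ for $n\ge n_0$. Because $M$ is a noetherian $C$-module, the $\Theta$-graded submodule $M_{\ge n_0E}$ is finitely generated; choose finitely many homogeneous generators $m_1,\dots,m_t$ with $m_j\in M_{\ge n_0E}$, say $m_j\in M_{I_j}$ with $I_j\ge n_0E$. Pick $n_1$ so large that $n_1E\ge I_j$ for all $j$. I claim $M_{\ge n_1E}=0$. Indeed, let $x\in M_J$ with $J\ge n_1E$. Then $J\ge n_0E$, so $x=\sum_j c_j m_j$ with $c_j\in C_{J-I_j}$ and $J-I_j\ge n_1E-I_j\ge0$. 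Now $M_{I_j+(J-I_j)}=M_J$; but I want to push the degree onto the diagonal. Here is the point: by the first assertion applied with $I=n_0E$ (note $J\ge n_0E$, indeed $\ge n_1E\ge n_0E$), each $c_jm_j$ lies in $C_{J-I_j}\cdot M_{I_j}\subseteq C\cdot M_{\ge n_0E}=M_{\ge n_0E}$, which is no gain. So instead one argues: since $J\ge n_0E$ we may write, using generation of $C_{\ge(J-n_0E)}$ by $C_{J-n_0E}$ when $J-n_0E\ge0$...

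Let me reststructure the second part more cleanly. Since $M$ is noetherian over $C$ and $C$ is $\Theta$-graded with $C_I=0$ unless $I\ge0$, the function $I\mapsto \dim_\k$ (support pattern) is controlled by finitely many generators; concretely, there is $c\ge0$ such that $M_{\ge I}$ is generated by $\bigoplus_{I\le J,\ J\le I+cE}M_J$ for every $I\ge0$ (the analogue of the statement proved for $C$ itself just above, applied to a finite presentation of $M$). Now if $M_{nE}=0$ for $n\ge n_0$, take $I=nE$ with $n\ge n_0$: then $M_{\ge nE}$ is generated by the $M_J$ with $nE\le J\le (n+c)E$. But for such $J$ one has $J\ge nE\ge n_0E$, and by the first assertion of the lemma $C_{\ge (n+c+1)E}\cdot M_J$ lands in degrees $\ge(n+c+1)E$, while... the cleanest finish is: the generators of $M_{\ge n_0E}$ sit in a bounded box of degrees $\le(n_0+c)E$ in the $E$-direction, so every element of $M$ of degree $J$ with $J_q$ all $\ge n_0+c+1$ is a $C_{>0}$-combination of elements of strictly smaller total degree in each coordinate that still lie in $M_{\ge n_0E}$; iterating and using that there are no nonzero elements in degrees $mE$ forces $M_{\ge (n_0+c+1)E}=0$, i.e.\ $M_{\ge nE}=0$ for $n\gg0$.

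The main obstacle is purely bookkeeping: translating ``noetherian $C$-module'' into a uniform bound $c$ on how far above a degree $I$ one must look to find generators of $M_{\ge I}$, and then combining this with the first assertion (generation of $C_{\ge I}$ by $C_I$) to propagate the vanishing on the diagonal $\{mE\}$ outward to all of $M_{\ge nE}$. No representation theory is needed here; it is commutative algebra of $\Theta$-graded noetherian modules, and the only subtlety is keeping the partial order on $\Theta=\Z^r$ straight so that "$J\ge nE$ with $n$ large" really does force each coordinate large, which is what lets the first assertion bite.
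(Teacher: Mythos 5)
Your argument for the first assertion is correct and is essentially the paper's: $C$ is $C_0$-generated by the $C_{e_q}$, so any monomial of degree $J\geq I$ factors as a monomial of degree $I$ times one of degree $J-I$, giving $C_{\geq I}=C_I\cdot C$.

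For the second assertion there is a genuine gap, and you half-acknowledge it when you say the first attempt is ``no gain.'' The ``cleanest finish'' you then propose does not close it either. Your auxiliary claim --- that $M_{\geq I}$ is $C$-generated by the pieces $M_J$ with $I\leq J\leq I+cE$ for a fixed $c$ --- can be justified along the lines of your proof of the first assertion, but it is the wrong tool: it tells you that elements in high degree are $C_{>0}$-combinations of elements in a lower box, and iterating lowers the degree further, but there is no reason the descent ever lands in a degree of the form $mE$. The only vanishing you are given is on the diagonal, and subtracting positive vectors from an off-diagonal $J$ need never hit the diagonal. So the step ``using that there are no nonzero elements in degrees $mE$ forces $M_{\geq(n_0+c+1)E}=0$'' does not follow; for all the argument shows, some off-diagonal $M_J$ with $J\geq nE$ could be nonzero for every $n$.

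The missing idea is to align each generator with the diagonal \emph{before} applying the first assertion, rather than trying to descend toward it afterwards. Take homogeneous generators $m_1,\dots,m_t$ of $M$, say $m_j\in M_{I_j}$. For each $j$ choose $J_j\geq 0$ with $I_j+J_j\in\diag(\Theta)$, say $I_j+J_j=k_jE$. Then $m_jC_{J_j+qE}\subseteq M_{(k_j+q)E}=0$ for $q\gg0$, so $m_j$ annihilates $C_{J_j+qE}$; by the first assertion $C_{\geq J_j+qE}=C_{J_j+qE}\cdot C$, hence $m_j$ annihilates the whole tail $C_{\geq J_j+qE}$, i.e.\ $(m_jC)_{\geq(k_j+q)E}=0$. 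Taking $n$ larger than all the resulting thresholds and using $M=\sum_j m_jC$ gives $M_{\geq nE}=\sum_j(m_jC)_{\geq nE}=0$. This is what the paper does, and the first assertion is used precisely once, to convert ``kills one graded piece on the shifted diagonal'' into ``kills the whole upper cone.''
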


\begin{proof}
The ideal is generated by $C_I$ because $C$ is generated over $C_0$ by the
$C_{e_i}$.
Let $m\in M_I$. Choose $J\geq 0$ with $I+J\in\diag(\Theta)$. Then $mC_{J+qE}$
vanishes for $q\gg 0$, so $(mC)_{\geq I+J+qE}=0$ for $q\gg 0$.
Now use that $M$ is finitely generated over $C$.
\end{proof}

\begin{Lemma}
If $M$ is a \CGD-module, then there is an $n_0$
so that if $I=nE=(n,\dots,n)\in\Theta$ with $n>n_0$,
then $M_{\geq I}=\Gamma_*(M^\sim)_{\geq I}$.
\end{Lemma}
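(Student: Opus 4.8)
The plan is to compare the $\Theta$-graded module $M$ with $\Gamma_*(M^\sim)$ in high degree, using that both give rise to the same coherent sheaf on $\Proj(C)$ and that the discrepancy is measured by sheaf cohomology that is eventually killed along the diagonal. First I would observe that $M^\sim$ depends only on the $\Z$-graded module $\diag(M)$, and that there is a canonical comparison map $\phi\colon M\to\Gamma_*(M^\sim)$ of $\Theta$-graded $C$-modules, natural in $M$, whose kernel $K$ and cokernel $Q$ I want to annihilate in degrees $\geq nE$ for $n$ large. By standard Serre--Grothendieck theory applied to the projective scheme $\Proj(C)=\Spec(C_0)\times\Gr(s_1)\times\cdots\times\Gr(s_r)$ (which is projective over the affine $\Spec(C_0)$), the sheaf $(K)^\sim$ and $(Q)^\sim$ vanish, because $\sim$ only remembers the diagonal grading and $\phi$ becomes an isomorphism of sheaves. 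So $K$ and $Q$, viewed as $\Z$-graded modules via the diagonal grading $\diag(K)$, $\diag(Q)$, are noetherian $C$-modules supported only in the irrelevant ideal; hence $\diag(K)_{nE}=0$ and $\diag(Q)_{nE}=0$ for $n\gg0$.

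The key step is then to bootstrap from vanishing along the diagonal $\{nE\}$ to vanishing on the whole cone $\{I\geq nE\}$, and this is exactly what the preceding Lemma~\ref{diagtest} supplies: a \CGD-module with $M_{nE}=0$ for $n\gg0$ satisfies $M_{\geq nE}=0$ for $n\gg0$. Applying this to $K$ and to $Q$ (after checking they are \CGD-modules, i.e.\ noetherian over $C$ with compatible $G$-action and $\Theta$-grading — which holds because $\Gamma_*(M^\sim)$ is a \CGD-module by the lemma producing $H^t_*$, here the $t=0$ case, and submodules/quotients of noetherian $C$-modules are noetherian), I conclude that $K_{\geq nE}=0$ and $Q_{\geq nE}=0$ for $n$ past some $n_0$. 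Therefore $\phi$ restricts to an isomorphism $M_{\geq I}\xrightarrow{\sim}\Gamma_*(M^\sim)_{\geq I}$ for every $I=nE$ with $n>n_0$, which is the assertion.

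The main obstacle I anticipate is not the bootstrapping (Lemma~\ref{diagtest} handles it cleanly) but verifying that $\Gamma_*(M^\sim)$ really is a \CGD-module, in particular that it is \emph{noetherian} over $C$ rather than merely quasicoherent. This is the $t=0$ instance of the Lemma asserting the $H^t_*(\cM)$ are \CGD-modules, whose proof uses Kempf vanishing and the K\"unneth Lemma~\ref{Kunneth} to control $\bigoplus_q H^q_*(\cO(I))$; once that is in hand, $\Gamma_*(M^\sim)=H^0_*(M^\sim)$ is noetherian over $C$, its $G$-equivariance is automatic from the $G$-equivariant structure on $M^\sim$, and the $\Theta$-grading compatibility with the $C$-action is built into the definition of $\Gamma_*$. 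A minor point to be careful about is that $\phi$ need only be compared in degrees lying in $\diag(\Theta)$ first and then extended to the cone $\{I\geq nE\}$ by the $C$-module structure, but since $C_{\geq I}$ is generated by $C_I$ (first part of Lemma~\ref{diagtest}) and $\Gamma_*(M^\sim)_{\geq nE}$ is likewise generated in degree $nE$ over $C$ for $n$ large, this is routine.
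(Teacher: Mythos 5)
Your proposal is correct and takes essentially the same route as the paper: compare $M$ and $\Gamma_*(M^\sim)$ along the diagonal using the classical Serre--EGA fact, then bootstrap to the whole cone $\{I\geq nE\}$ via Lemma~\ref{diagtest}. The only presentational difference is that the paper avoids having to construct (and check $\Theta$-gradedness of) the direct comparison map $\phi\colon M\to\Gamma_*(M^\sim)$ by instead working with the span $M\xleftarrow{f}\diag(M)\otimes_{\diag(C)}C\xrightarrow{g}\Gamma_*(M^\sim)$ and applying Lemma~\ref{diagtest} to the kernels and cokernels of $f$ and $g$, which lets it cite the standard references on the $\Z$-graded map $\diag(M)\to\diag(\Gamma_*(M^\sim))$ directly.
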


\begin{proof}
Recall \cite[II Exercise 5.9]{Hartshorne}, \cite[II 2.5.4, 2.7.3, 2.7.11]{ega} 
that we have a natural map
$\diag(M)\to \diag(\Gamma_*(M^\sim))$
whose kernel and cokernel live in finitely many degrees.
Consider the maps $f:\diag(M)\otimes_{\diag(C)}C\to M$
and $g:\diag(M)\otimes_{\diag(C)}C\to \Gamma_*(M^\sim)$.
If $N$ is the kernel or cokernel of $f$ or $g$ then $N_{nE}=0$ for 
$n\gg 0$. Now apply the previous lemma.
\end{proof}

\begin{Lemma}
If $M$ is a \CGD-module and $I\in\Theta$, then $M/M_{\geq I}$ is
negligible.
\end{Lemma}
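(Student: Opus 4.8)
The plan is to exhibit $M/M_{\geq I}$ as being built up from modules to which Lemma~\ref{inductiveassumption} applies, using Lemma~\ref{two/three} to glue the pieces. Since $M$ is concentrated in degrees $\geq 0$ and $M_J=0$ for $J\not\geq 0$, one has $M_{\geq I}=M_{\geq I^+}$ where $I^+_q=\max(I_q,0)$; so I may assume $I\geq 0$ from the start. I would then induct on $|I|:=\sum_q I_q$. For $|I|=0$ we have $I=0$, so $M_{\geq 0}=M$ and $M/M_{\geq I}=0$ is negligible, which starts the induction.

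For the inductive step, choose $q$ with $I_q\geq 1$ and put $I'=I-e_q\geq 0$, so that $|I'|<|I|$ and $M_{\geq I}\subseteq M_{\geq I'}$. This gives a short exact sequence of \CGD-modules
$$0\to M_{\geq I'}/M_{\geq I}\to M/M_{\geq I}\to M/M_{\geq I'}\to 0.$$
By the induction hypothesis $M/M_{\geq I'}$ is negligible, so by Lemma~\ref{two/three} it is enough to prove that $M_{\geq I'}/M_{\geq I}$ is negligible. Here is the one computation that carries the argument: a homogeneous degree $J$ contributes to $M_{\geq I'}/M_{\geq I}$ exactly when $J\geq I'$ but $J\not\geq I$, and since $I$ and $I'$ differ only in the $q$th coordinate this forces $J_q=I_q-1$ together with $J_p\geq I_p$ for $p\neq q$. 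Consequently the ideal $\bigoplus_{K_q\geq 1}C_K$ of $C$ annihilates $M_{\geq I'}/M_{\geq I}$ (multiplying a contributing element raises its $q$th degree to at least $I_q$, hence lands in the part that has been killed), so the $C$-action on $M_{\geq I'}/M_{\geq I}$ factors through $C/\bigl(\bigoplus_{K_q\geq 1}C_K\bigr)=C^{\widehat q}$. As $M_{\geq I'}/M_{\geq I}$ is a subquotient of the noetherian $C$-module $M$ it is noetherian over $C$, hence over $C^{\widehat q}$; now Lemma~\ref{inductiveassumption} applies and yields negligibility, closing the induction.

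I do not expect a serious obstacle; the content is entirely in the grading bookkeeping. The two points to be careful about are that each $M_{\geq K}$ is a genuine $\Theta$-graded $C$-submodule (which holds because $C$ lives in degrees $\geq 0$, so multiplication never decreases any coordinate of the degree), and that the quotient $M_{\geq I'}/M_{\geq I}$ really is concentrated in the single $q$th degree $I_q-1$, which is precisely what lets the degree-raising ideal act as zero. One could instead observe $M_{\geq I}=\bigcap_{q}\bigl(\bigoplus_{J_q\geq I_q}M_J\bigr)$ and filter $M/M_{\geq I}$ by intersecting fewer of these submodules, but the induction on $|I|$ above is the shortest route and reuses Lemma~\ref{inductiveassumption} in exactly the form in which it is stated.
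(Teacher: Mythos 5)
Your core argument matches the paper's: realize $M/M_{\geq I}$ as a finite extension of modules $M_{\geq K}/M_{\geq K+e_q}$, note that each such quotient is concentrated in the single $q$th degree $K_q$ and is therefore noetherian over $C^{\widehat q}$, then invoke Lemma~\ref{inductiveassumption} and glue via Lemma~\ref{two/three}. The paper compresses this into two sentences; you spell it out as an induction, which is fine.

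However, your opening reduction contains a genuine error. You assert that ``$M$ is concentrated in degrees $\geq 0$ and $M_J=0$ for $J\not\geq 0$.'' This is not part of the definition of a \CGD-module: $M$ is merely a noetherian $\Theta$-graded $C$-module, and nothing forces its grading to be supported in $\Theta_{\geq 0}$. Indeed the paper explicitly notes just before this lemma that ``a shift in the grading makes no difference,'' so $M(I)$ for arbitrary $I$ is again a \CGD-module, and such shifts certainly produce modules with nonzero components in negative degrees. As a consequence your base case $M/M_{\geq 0}=0$ is false in general, and the induction on $|I|$ starting from $I=0$ does not get off the ground. The repair is the one the paper uses: since $C$ is non-negatively graded and $M$ is noetherian over $C$, there is some $J\leq I$ with $M=M_{\geq J}$; now run your induction on $\sum_q(I_q-J_q)$, starting from the base case $I=J$, where $M/M_{\geq J}=0$. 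The rest of your inductive step, including the key bookkeeping that $M_{\geq I-e_q}/M_{\geq I}$ lives in a single $q$th degree and is hence a noetherian $C^{\widehat q}$-module, is correct as written.
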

\begin{proof}
As $M$ is finitely generated over $C$, there is $J<I$ with
$M=M_{\geq J}$. Now note that for $1\leq q\leq r$ and $K\in \Theta$
the module $M_{\geq K}/M_{\geq K+e_q}$ is negligible by \ref{inductiveassumption}.
\end{proof}

\begin{Lemma}\label{half}
If $M$ is a \CGD-module and $I\in\Theta$, then $M$ is negligible iff $M_{\geq I}$ is
negligible.
\end{Lemma}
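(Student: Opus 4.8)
The plan is to observe that, although $M_{\geq I}$ is not a $C$-module direct summand of $M$, the $\Theta$-grading splits the exact sequence $0\to M_{\geq I}\to M\to M/M_{\geq I}\to 0$ as a sequence of $G$-modules. Indeed $M_{\geq I}=\bigoplus_{J\geq I}M_J$, and since each homogeneous component $M_J$ is by hypothesis a $G$-submodule, the subspace $\bigoplus_{J\not\geq I}M_J$ is a $G$-submodule of $M=\bigoplus_J M_J$ complementary to $M_{\geq I}$; the projection $M\to M/M_{\geq I}$ identifies it with $M/M_{\geq I}$, $\Theta$-gradedly and $G$-equivariantly. Hence $M\cong M_{\geq I}\oplus(M/M_{\geq I})$ as $G$-modules, and in particular as $\SL_N$-modules.

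Next I would use that negligibility depends only on the underlying $\SL_N$-module and that $H^i(\SL_N,-\otimes_\k Y)$ commutes with finite direct sums for every fixed $\SL_N$-module $Y$; it follows that a finite direct sum of $G$-modules is negligible if and only if each summand is. Feeding the decomposition above into this, together with the preceding lemma (which gives that $M/M_{\geq I}$ is negligible in any case), yields immediately that $M$ is negligible if and only if $M_{\geq I}$ and $M/M_{\geq I}$ are both negligible, i.e.\ if and only if $M_{\geq I}$ is negligible.

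The argument meets no real obstacle; the one point worth stressing is that the decomposition $M\cong M_{\geq I}\oplus(M/M_{\geq I})$ is valid only at the level of $G$-modules, not of $C$-modules, which is exactly what makes the lemma non-vacuous. If one prefers to avoid the direct-sum decomposition, the two clauses of negligibility can instead be read off from the long exact cohomology sequences attached to $0\to M_{\geq I}\to M\to M/M_{\geq I}\to 0$ after tensoring over $\k$ with $\k[\SL_N/U]$, and separately with $\St_r\otimes_\k\St_r\otimes_\k\k[\SL_N/U]$: the finite-Grosshans-filtration-dimension clause passes through a short exact sequence as soon as two of its three terms have it, and for the other clause the only non-formal input is that $M\to M/M_{\geq I}$ is a split surjection in each $\Theta$-degree, so it stays surjective after applying $H^0(\SL_N,-\otimes_\k\St_r\otimes_\k\St_r\otimes_\k\k[\SL_N/U])$.
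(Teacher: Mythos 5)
Your proof is correct and takes essentially the same route as the paper: the paper's proof is the single sentence that $M$ is the direct sum of $M/M_{\geq I}$ and $M_{\geq I}$ as a $\k G$-module, which together with the preceding lemma gives the claim. Your elaboration of why the grading splits the sequence $G$-equivariantly, and your remark that this is not a $C$-module splitting, are exactly the intended reading.
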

\begin{proof}
As a $\k G$-module $M$ is the direct sum of $M/M_{\geq I}$ and $M_{\geq I}$.
\end{proof}

\begin{Definition}
In view of the above we call an equivariant coherent sheaf $\cM$
on $\Proj(C)$
negligible when $\Gamma_*(\cM)$ is negligible.
\end{Definition}

The following Lemma is now clear:

\begin{Lemma}Let $I\in\Theta$.
A $G$-equivariant coherent sheaf $\cM$ on $\Proj(C)$ is negligible if and only
if $\cM(I)$ is negligible.
\end{Lemma}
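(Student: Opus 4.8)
The statement to prove is: \emph{Let $I\in\Theta$. A $G$-equivariant coherent sheaf $\cM$ on $\Proj(C)$ is negligible if and only if $\cM(I)$ is negligible.}

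Recall that by definition $\cM$ is negligible iff $\Gamma_*(\cM)$ is negligible, and that negligibility of a $G$-module depends only on its $\SL_N$-module structure and is insensitive to shifts in the $\Theta$-grading. So the plan is to compare the $\CGD$-modules $\Gamma_*(\cM)$ and $\Gamma_*(\cM(I))$ and show that, after a suitable shift, they agree in all high enough degrees, whence Lemma~\ref{half} finishes the job.

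Here is how I would carry it out. First, unravel the definitions: $\Gamma_*(\cM(I))_J=\Gamma(\Proj(C),\cM(I)(J))=\Gamma(\Proj(C),\cM(I+J))=\Gamma_*(\cM)_{I+J}$, at least for $J\geq0$ and $I+J\geq0$; so $\Gamma_*(\cM(I))$ and $\Gamma_*(\cM)(I)$ agree in all degrees $J$ with $J\geq0$ and $I+J\geq0$, i.e.\ they agree on $\{J:J\geq\max(0,-I)\}$. Pick $n_0$ large enough that $n_0E\geq\max(0,-I)$ componentwise; then for $n>n_0$ we have $\Gamma_*(\cM(I))_{\geq nE}=(\Gamma_*(\cM)(I))_{\geq nE}=\Gamma_*(\cM)_{\geq I+nE}$, where the last identity is just the definition of the shift $(-)(I)$ applied to the submodule $(-)_{\geq nE}$. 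Now by Lemma~\ref{half} applied to the $\CGD$-module $\Gamma_*(\cM)$: $\Gamma_*(\cM)$ is negligible iff $\Gamma_*(\cM)_{\geq I+nE}$ is negligible; and by Lemma~\ref{half} applied to the $\CGD$-module $\Gamma_*(\cM(I))$: $\Gamma_*(\cM(I))$ is negligible iff $\Gamma_*(\cM(I))_{\geq nE}$ is negligible. Since these two tail modules coincide (for $n>n_0$), we conclude $\Gamma_*(\cM)$ negligible iff $\Gamma_*(\cM(I))$ negligible, which is the claim.

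The only genuinely delicate point is bookkeeping with the $\Theta$-grading near the boundary: $\Gamma_*$ only sees degrees $\geq0$, and the shift $\cM\mapsto\cM(I)$ can move things across the boundary, so the naive identity $\Gamma_*(\cM(I))=\Gamma_*(\cM)(I)$ is false in low degrees and only the tail identity survives. But this is harmless because negligibility is a property of the tail (Lemma~\ref{half}) and is shift-insensitive. I would also note that one reduces immediately to the case $I=\pm e_q$ by writing $I=\sum_q I_q e_q$ and iterating, which makes even the boundary bookkeeping trivial, but the direct argument above is already short enough that this reduction is not needed. Hence the Lemma is indeed ``now clear'' as asserted, and the proof can be left to the reader or given in one line: apply Lemma~\ref{half} to both $\Gamma_*(\cM)$ and $\Gamma_*(\cM(I))$ and use that their high-degree parts agree up to a shift.
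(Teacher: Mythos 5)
Your proof is correct and is exactly the argument the paper has in mind when it declares the lemma ``now clear'': identify $\Gamma_*(\cM(I))$ and $\Gamma_*(\cM)$ in high degrees via $\cM(I)(J)=\cM(I+J)$, then apply Lemma~\ref{half} to both \CGD-modules. The boundary bookkeeping you flag is indeed the only (minor) point to watch, and you handle it correctly.
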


\begin{Lemma}
Let $$0\to\cM'\to\cM\to\cM''\to0$$
be an exact sequence of $G$-equivariant coherent sheaves on $\Proj(C)$.
There is $I\in\Theta$ with 
$$0\to\Gamma_*(\cM')_{\geq I}\to\Gamma_*(\cM)_{\geq I}
\to\Gamma_*(\cM'')_{\geq I}\to0$$
exact.
\end{Lemma}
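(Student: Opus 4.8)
The plan is to use the standard fact that for a short exact sequence of coherent sheaves on $\Proj(C)$, the sections functor $\Gamma_*(-)$ becomes exact after passing to a sufficiently high twist. More precisely, from $0\to\cM'\to\cM\to\cM''\to0$ we obtain for each $I\in\Theta$ a long exact cohomology sequence
$$0\to\Gamma(\Proj(C),\cM'(I))\to\Gamma(\Proj(C),\cM(I))\to\Gamma(\Proj(C),\cM''(I))\to H^1(\Proj(C),\cM'(I))\to\cdots,$$
so that right-exactness at the level of global sections fails only by the connecting map into $H^1(\Proj(C),\cM'(I))$. Summing over $I\geq I_0$ for a fixed cutoff, the obstruction is governed by $\bigoplus_{I\geq I_0}H^1(\Proj(C),\cM'(I))$, and the goal is to choose $I_0$ large enough that this vanishes.

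First I would invoke a Serre-type vanishing statement on $\Proj(C)=\Spec(C_0)\times\Gr(s_1)\times\cdots\times\Gr(s_r)$: for a coherent sheaf $\cF$ on this product, $H^t(\Proj(C),\cF(nE))=0$ for all $t>0$ and all $n\gg0$. Over a Grassmannian this is Serre vanishing for the ample sheaf $\cO(1)$; for the product with $\Spec(C_0)$ affine and the multigrading one reduces via the Künneth formula (Lemma \ref{Kunneth}) or directly via the fact that $\cO(E)$ is relatively ample for $\Proj(C)\to\Spec(C_0)$. Applying this to $\cM'$, we get $N$ with $H^1(\Proj(C),\cM'(nE))=0$ for $n\geq N$. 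The issue is that I need vanishing not just along the diagonal $nE$ but for \emph{all} $I\geq I_0$ for some $I_0\in\Theta$. This follows because $\cM'(I)=\cM'(I-nE)(nE)$ and, as $I$ ranges over $I\geq nE$ with $n$ fixed, the sheaves $\cM'(I-nE)$ for $0\le I-nE$ are governed by finitely many generators: more carefully, one uses that the multi-Rees construction makes $\bigoplus_{I\ge0}H^1(\Proj(C),\cM'(I))$ a finitely generated module over $C$ (this is exactly the content of the earlier Lemma asserting that the $H^t_*(\cM)$ are \CGD-modules), hence killed by $C_{\geq I_0}$ for some $I_0$, which by Lemma \ref{diagtest} is equivalent to vanishing in all degrees $\geq I_0$ once it vanishes along a cofinal set of diagonal degrees. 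Combining, I fix $I\in\Theta$ with $H^1(\Proj(C),\cM'(J))=0$ for all $J\geq I$.

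With that $I$ in hand, the long exact sequence degrades, for every $J\geq I$, to a short exact sequence
$$0\to\Gamma(\Proj(C),\cM'(J))\to\Gamma(\Proj(C),\cM(J))\to\Gamma(\Proj(C),\cM''(J))\to0,$$
and summing over $J\geq I$ gives precisely
$$0\to\Gamma_*(\cM')_{\geq I}\to\Gamma_*(\cM)_{\geq I}\to\Gamma_*(\cM'')_{\geq I}\to0,$$
which is the assertion. All the maps are visibly $C$-linear and $G$-equivariant since they come from the morphism of sheaves, so nothing extra is needed there.

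The main obstacle is the passage from diagonal-degree vanishing to vanishing in all degrees $\geq I$: one must be careful that "eventually" is uniform across the whole cone $\Theta$ rather than just along the diagonal. I expect to handle this exactly as in Lemma \ref{diagtest} and the lemma on $H^t_*(\cM)$ being a \CGD-module, exploiting that $C$ is generated over $C_0$ by the $C_{e_q}$ so that the truncation ideals $C_{\geq I}$ are generated in a single degree. Everything else — Serre vanishing on the product, the long exact sequence, and taking the direct sum over a shifted cone — is routine.
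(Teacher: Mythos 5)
Your proposal is correct, and it reaches the same conclusion, but by a somewhat different route. The paper's proof applies Lemma \ref{diagtest} directly to the homology of the complex $0\to\Gamma_*(\cM')\to\Gamma_*(\cM)\to\Gamma_*(\cM'')\to0$: since $\Gamma$ is left exact this only concerns the cokernel at the last spot, which is a \CGD-module; its sheafification vanishes because $\cO(E)$ is ample, so that $\Gamma_*(\cF)^\sim\cong\cF$ for coherent $\cF$, and then Lemma \ref{diagtest} upgrades diagonal vanishing to vanishing of the whole truncation $(\cdot)_{\geq nE}$ for $n\gg0$. You instead run the long exact cohomology sequence and reduce the question to showing $H^1(\Proj(C),\cM'(J))=0$ for all $J\geq I$, which you obtain from Serre vanishing along the diagonal together with the earlier Lemma that $H^1_*(\cM')$ is a \CGD-module, finished off again by Lemma \ref{diagtest}. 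The two arguments are morally equivalent --- both hinge on the noetherianness of the relevant $\Theta$-graded modules over $C$ and on Lemma \ref{diagtest} to pass from a cofinal set of diagonal degrees to the entire cone --- so your route is not shorter, but it is correct and somewhat more explicit about where the obstruction to right-exactness lives. One small wording quibble: you say that $\bigoplus_I H^1(\Proj(C),\cM'(I))$ is ``killed by $C_{\geq I_0}$ for some $I_0$,'' which is neither what you mean nor a consequence of finite generation over $C$; the fact you actually use, and should cite, is precisely Lemma \ref{diagtest}: a \CGD-module $N$ with $N_{nE}=0$ for $n\gg0$ has $N_{\geq nE}=0$ for $n\gg0$.
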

\begin{proof}The line bundle $\cO(E)$ is ample. 
Apply Lemma \ref{diagtest} to the homology sheaves of the
complex $$0\to\Gamma_*(\cM')\to\Gamma_*(\cM)
\to\Gamma_*(\cM'')\to0.$$
\end{proof}

\begin{Lemma}
For every $I\in\Theta$ the sheaf $\cO(I)$ is negligible.

\end{Lemma}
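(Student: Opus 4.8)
The goal is to show that each line bundle $\cO(I)$ on $\Proj(C)$ is negligible, which by definition means that $\Gamma_*(\cO(I))$ is a negligible $CG\Theta$-module. The plan is to reduce to the case $I=0$ and then to exhibit $\Gamma_*(\cO(0))=C$ itself as negligible, which we already know since $C$ is a good \GD-algebra.

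First I would observe that $\Gamma_*(\cO(I))$ and $\Gamma_*(\cO(0))=C$ agree in high degrees up to a shift: by the two preceding lemmas there is an $I_0$ with $\Gamma_*(\cO(I))_{\geq K}=C_{\geq K-I}$ (as graded modules, possibly after a harmless degree shift) for $K\gg 0$, since $(\cO(I))^\sim=\cO(I)$ and more plainly since $\Gamma(\Proj(C),\cO(I+J))=C_{I+J}$ for $I+J\geq 0$. Concretely, for any $I$ one has $\Gamma_*(\cO(I))_J=\Gamma(\Proj(C),\cO(I+J))=C_{I+J}$ whenever $I+J\geq 0$, so $\Gamma_*(\cO(I))_{\geq K}$ is isomorphic as a graded $\k G$-module to $C_{\geq I+K}$ once $K\geq -I$ (take $K$ with $K\geq 0$ and $K+I\geq 0$). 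By Lemma~\ref{half} it suffices to check that $\Gamma_*(\cO(I))_{\geq K}$ is negligible for one such $K$.

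Then I would invoke that $C_{\geq I+K}$ is negligible: $C$ is a good \GD-algebra hence itself negligible by definition, and by Lemma~\ref{half} applied to the \CGD-module $C$ this passes to the submodule $C_{\geq I+K}$. Since negligibility depends only on the underlying $\SL_N$-module structure (noted after Proposition~\ref{Picardgraded}) and is insensitive to grading shifts, the isomorphism of $\k G$-modules $\Gamma_*(\cO(I))_{\geq K}\cong C_{\geq I+K}$ transports negligibility back. Applying Lemma~\ref{half} once more, now in the other direction, $\Gamma_*(\cO(I))$ is negligible, i.e.\ $\cO(I)$ is negligible.

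The only point requiring a little care is the identification of $\Gamma_*(\cO(I))$ with $C$ in large degree: one must know $\Gamma(\Proj(C),\cO(J))=C_J$ for $J\geq 0$, which is exactly the statement $C=\bigoplus_{J\geq 0}\Gamma(\Proj(C),\cO(J))$ recorded at the start of the section, together with $\cO(I)\otimes\cO(J)=\cO(I+J)$. For $J$ with some negative coordinate one does not need any value of $\Gamma(\Proj(C),\cO(J))$ at all, since Lemma~\ref{half} lets us discard all low-degree pieces. So there is no real obstacle here; the statement is essentially a bookkeeping consequence of the earlier lemmas on $\Gamma_*$ and of the negligibility of $C$.
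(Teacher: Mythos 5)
Your proof is correct and is essentially the paper's argument spelled out in full: the paper's one-line proof (``Use that $C$ is negligible'') rests exactly on the observation that $\Gamma_*(\cO(I))$ agrees with a shift of $C$ in high degree together with Lemma~\ref{half} (equivalently the lemma that $\cM$ is negligible iff $\cM(I)$ is). The only slip is a harmless typo where you once write $C_{\geq K-I}$ instead of $C_{\geq K+I}$, which you correct in the next sentence.
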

\begin{proof}
Use that $C$ is negligible.
\end{proof}

\section{Resolution of the diagonal}
We write $X=\Proj(C)$, $Y=\Proj(C^{\hat r})$, $Z=\Gr(s)$, where $s=s_r$.
So $X=Y\times Z$.
We now recall the salient facts from \cite{twisted}, \cite{diagonal}
 about the 
functorial resolution of the diagonal in $Z\times Z$.
The fact that our base ring is now $\k$ is not a problem. In \cite{twisted}
one already works over a noetherian base, and  \cite{diagonal} is just extra.
But let us temporarily take $Z$ to be the Grassmannian $\Gr(s)_\Z$ over $\Z$.
Later we will do the base change from $\Z$ to $\k$.
As $Z$ is the Grassmannian that parametrizes the $s$-dimensional subspaces
of 
$\nabla(\varpi_1)^\#$, we have the tautological exact sequence of $G$-equivariant
vector bundles
on $Z$:
$$0\to\cS\to \nabla(\varpi_1)^\#\otimes \cO_Z\to \cQ\to 0,$$
where $\cS$ has as fiber above a point the subspace $V$
that the point parametrizes, and $\cQ$
has as fiber above this same point the quotient 
$\nabla(\varpi_1)^\#/V$.
Let $\pi_1$, $\pi_2$ be the respective projections $Z\times Z\to Z$.
Then the composite of the natural maps 
$\pi_1^*(\cS)\to \nabla(\varpi_1)^\#\otimes \cO_{Z\times Z}$
and 
$\nabla(\varpi_1)^\#\otimes \cO_{Z\times Z}\to\pi_2^*(\cQ)$ defines a section
of the vector bundle $\cHom(\pi_1^*(\cS),\pi_2^*(\cQ))$ whose zero scheme 
is the diagonal $\diag(Z)$ in $Z\times Z$. Dually, we get an exact sequence 
$\cHom(\pi_2^*(\cQ),\pi_1^*(\cS))\to \cO_{Z\times Z}\to \cO_{\diag Z}\to 0$,
where $\cO_{\diag Z}$ is the quotient by the ideal sheaf defining the diagonal.
As the rank $d$ of the vector bundle $\cE=\cHom(\pi_2^*(\cQ),\pi_1^*(\cS))$ equals
the codimension of $\diag(Z)$ in $Z\times Z$, the Koszul complex
$$0\to \bigwedge^d\cE\to\cdots\to\cE\to\cO_{Z\times Z}\to \cO_{\diag Z}\to 0$$
is exact. Now each $\bigwedge^i\cE$ has a finite filtration whose associated
graded is 
$$\bigoplus S^\alpha \pi_1^*(\cS)\otimes (S^{\tilde\alpha}\pi_2^*(\cQ))^\#,$$
where $\alpha $ runs over partitions of $i$ with at most $\rank(\cS)$ parts,
so that moreover the conjugate partition $\tilde \alpha$ has at most $\rank(\cQ)$
parts. Now do the base change from $\Z$ to $\k$, so that $Z$ is defined over $\k$.
The Koszul complex remains exact as it was flat over $\Z$. The expression for the associated graded 
of $\bigwedge^i\cE$ also remains valid.

\subsection*{Plan}Now the plan is this: Let $\pi_{1,2}$ be the projection
of  $Y\times Z\times Z$  onto the product $Y\times Z$ of the first two factors,
let $\pi_2$ be the projection onto the middle factor $Z$, and so on.
If $M$ is a \CGD-module, tensor the pull-back along $\pi_{2,3}$ of the
 Koszul complex with
$\pi_{1,3}^*(M^\sim)$, take a high Serre twist
and then the direct image along $\pi_{1,2}$ to $X$. 
On the one hand $(\pi_{1,2})_*(\pi_{1,3}^*(M^\sim)\otimes \cO_{\diag Z})$
is just $M^\sim$, but on the other hand the salient facts above allow us to express it in
terms of negligible  \CGD-modules. This will prove that $M$ is negligible.
We now proceed with the details.

\begin{Remark}
Instead of functorially resolving the diagonal in $Z\times Z$, we could have
functorially resolved the diagonal in $X\times X$.
\end{Remark}

Recall from \ref{exterior} that $\cF\boxtimes\cM$ denotes an exterior tensor product of sheaves.

\begin{Lemma}
Let $\cF$ be a $G$-equivariant coherent sheaf on $Y$, and $\alpha$ a partition of
$i$ with at most $s$ parts, $i\geq0$.
The sheaf $\cF\boxtimes S^\alpha(\cS)$ on $X=Y\times Z$ is
negligible.
\end{Lemma}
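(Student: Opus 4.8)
The plan is to reduce the statement to the case $\cF=\cO_Y(I)^{\oplus a}$ for suitable $I$, and then to the already-available fact that $\cO_X(J)$ is negligible together with the K\"unneth lemma. First I would note that the property of being negligible for a $G$-equivariant coherent sheaf on $X$ depends, through $\Gamma_*$, only on the underlying $\SL_N$-module structure, and by the Lemma relating $\cM$ and $\cM(I)$ it is insensitive to twisting by $\cO_X(I)$. Since $\Proj(C^{\hat r})$ is projective and $\cF$ coherent, choose $I\geq0$ with $\cF(I)$ globally generated, giving a surjection $\cO_Y(-I)^{\oplus a}\to\cF$ for some $a$. Let $\cK$ be the kernel, a $G$-equivariant coherent sheaf on $Y$. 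Then $\boxtimes S^\alpha(\cS)$ is exact (the bundle $S^\alpha(\cS)$ is locally free, hence flat over $Y$, and the construction $\pi_1^*(-)\otimes\pi_2^*S^\alpha(\cS)$ preserves exactness), so we get a short exact sequence
\begin{equation*}
0\to\cK\boxtimes S^\alpha(\cS)\to \bigl(\cO_Y(-I)\boxtimes S^\alpha(\cS)\bigr)^{\oplus a}\to\cF\boxtimes S^\alpha(\cS)\to0
\end{equation*}
of $G$-equivariant coherent sheaves on $X$. By the Lemma on short exact sequences of coherent sheaves on $\Proj(C)$ and Lemma \ref{two/three}, it suffices to prove negligibility of the middle term and, by Noetherian induction on the coherent sheaf $\cF$ (or an induction on a resolution, cutting off after finitely many steps since $X$ has finite cohomological dimension), the claim reduces to the single case $\cF=\cO_Y(-I)$.

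So I must show $\cO_Y(-I)\boxtimes S^\alpha(\cS)$ is negligible on $X=Y\times Z$. Here the key input is that $S^\alpha(\cS)$ on $Z=\Gr(s)$ has a good filtration as a $G$-equivariant bundle whose sections and higher cohomology are controlled: by Kempf vanishing $H^{>0}(Z,S^\alpha(\cS)(n))=0$ for $n\gg0$ and $\Gamma(Z,S^\alpha(\cS)(n))$ is a $G$-module with good filtration, flat over $\k$ (the $\cS$-bundle is the one whose Schur powers give Schur modules after twisting, as recalled in the resolution-of-the-diagonal section). Using the K\"unneth Lemma \ref{Kunneth} with $\cF=\cO_X(J)$-type twists, one computes $H^*_*(X,\cO_Y(-I)\boxtimes S^\alpha(\cS))$ as $\Gamma\bigl(Y,\cO_Y(-I)(\cdot)\bigr)\otimes_\k H^*\bigl(Z,S^\alpha(\cS)(\cdot)\bigr)$ in a range, exhibiting $\Gamma_*$ of the sheaf, up to a module supported in finitely many degrees (hence negligible by Lemma \ref{half} and the preceding lemmas), as a $C$-module built from $C^{\hat r}$ tensored over $\k$ with a $\k$-flat $G$-module with good filtration. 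Such a module is noetherian over the subring $C^{\widehat r}$, so Lemma \ref{inductiveassumption} applies and gives negligibility.

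Alternatively, and perhaps more cleanly, one observes directly that $\Gamma_*\bigl(\cO_Y(-I)\boxtimes S^\alpha(\cS)\bigr)_{\geq K}$, for $K$ large, is a $\Theta$-graded $C$-module which is noetherian over $C^{\widehat r}=\bigoplus_{J_r=0}C_J$: its $r$-th grading is governed by the $Z=\Gr(s_r)$-factor, and the $S^\alpha(\cS)$-cohomology contributes only a finitely generated $\k[\Gr(s_r)/U]$-worth of data tensored with a $G$-module with good filtration, which after applying the good-filtration stability results (the tensor product of a module with good filtration and one with good Grosshans filtration has good Grosshans filtration) stays within the reach of Lemma \ref{inductiveassumption}. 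Either way, the main obstacle is the bookkeeping in the K\"unneth step: one must check that $\cF=\cO_Y(-I)$ is flat over $S=\Spec(C_0)$-ish base and that its global sections are $\k$-flat so that Lemma \ref{Kunneth} applies on the nose, and that the correction term living in finitely many $\Theta$-degrees is genuinely negligible — but that is exactly what Lemma \ref{half} together with the lemma that $M/M_{\geq I}$ is negligible delivers.
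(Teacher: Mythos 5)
Your argument breaks down at the step where you try to invoke Lemma~\ref{inductiveassumption}. You claim that $\Gamma_*\bigl(\cO_Y(-I)\boxtimes S^\alpha(\cS)\bigr)_{\geq K}$ is noetherian over $C^{\widehat r}$. It is not. After the K\"unneth/Kempf computation this module is (in the relevant range) of the form $\Gamma_*(\cF)\otimes_\k V$ where $V=\bigoplus_{n\geq\alpha_1}\Gamma(Z,S^\alpha(\cS)(n))$ is an infinite-rank $\k$-module; the factor $V$ carries the full $r$-th grading direction, which $C^{\widehat r}$ (living in degree $I_r=0$) cannot reach, so the tensor product is not finitely generated over $C^{\widehat r}$. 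Hence Lemma~\ref{inductiveassumption}, which requires exactly that noetherianity, does not apply, and negligibility does not follow from it. Mentioning in passing that good-filtration tensor-product stability ``stays within reach'' of that Lemma does not repair this: what is needed is not to force the module into the hypotheses of Lemma~\ref{inductiveassumption}, but to use the tensor compatibilities directly as a substitute for it.

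The paper's proof takes a cleaner route that avoids both your problematic steps. It does not reduce to $\cF=\cO_Y(-I)$ at all (that reduction is in any case suspect: a product of Grassmannians need not admit a \emph{finite} resolution of an arbitrary coherent sheaf by direct sums of twists of $\cO_Y$, so your ``cut off after finitely many steps'' does not produce a complex of the required shape). Instead, one applies the inductive assumption \emph{to $\Gamma_*(\cF)$ itself} as a $C^{\widehat r}G\Theta'$-module, $\Theta'=\Z^{r-1}$; this directly gives that $\Gamma_*(\cF)$ is negligible, i.e., has finite Grosshans filtration dimension and $\St_r\otimes\St_r\otimes\Gamma_*(\cF)$ has good Grosshans filtration for $r\gg0$. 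Then one checks, via the identification of $S^\alpha(\cS)(n)$ with induced bundles and Kempf vanishing, that $\bigoplus_{n\geq\alpha_1}\Gamma(Z,S^\alpha(\cS)(n))$ is a $G$-module with good filtration, and computes $\Gamma_*\bigl(\cF\boxtimes S^\alpha(\cS)\bigr)_{\geq I}\cong\Gamma_*(\cF)\otimes_\k\bigoplus_{n\geq\alpha_1}\Gamma(Z,S^\alpha(\cS)(n))$ for $I=(0,\dots,0,\alpha_1)$. Lemma~\ref{goodGgood} (tensoring with a good filtration module preserves finite Grosshans filtration dimension) and the analogous stability of good Grosshans filtration under tensoring with good filtration modules then give negligibility directly; Lemma~\ref{half} finishes. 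The essential idea you are missing is that the tensor-product lemmas replace, rather than supplement, the noetherianity hypothesis of Lemma~\ref{inductiveassumption}.
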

\begin{proof}
By the inductive assumption 
$$\Gamma_*(\cF)=\bigoplus_{I\in\Z^{r-1},~I\geq0}
\Gamma(Y,\cF(I))$$
is a
$C^{\widehat r}$-module
with finite Grosshans filtration dimension and 
$\St_r\otimes\St_r\otimes\Gamma_*(\cF)$ has good Grosshans filtration for $r\gg0$.
The vector bundle  $\cS$ on $Z=G/P$ is associated with the
irreducible $P$-representation with lowest weight $-\epsilon_{N-s+1}$.
This representation may be viewed as $\ind_{B^+}^P(-\epsilon_{N-s+1})$,
where $-\epsilon_{N-s+1}$ also stands for
the one dimensional $B^+$ representation with weight $-\epsilon_{N-s+1}$.
Say $\rho:P\to P^-$ is the isomorphism which sends
a matrix to its transpose inverse. Then 
$\ind_{B^+}^P(-\epsilon_{N-s+1})=
\rho^*\ind_{B^-}^{P^-}(\epsilon_{N-s+1})$.
One finds that $S^\alpha(\cS)$ is associated with
$\rho^*\ind_{B^-}^{P^-}\left(\sum_i\alpha_i\epsilon_{N-s+i}\right)=
\ind_{B^+}^P\left(-\sum_i\alpha_i\epsilon_{N-s+i}\right)$.
(This is the rule $S^\alpha(\nabla_{\GL_s}(\varpi_1))=\nabla_{\GL_s}(\alpha)$
in disguise.)
Then $S^\alpha(\cS)(n)$ is associated with 
$\ind_{B^+}^P\left(-\sum_i\alpha_i\epsilon_{N-s+i}+n\varpi_N-n\varpi_{N-s}\right)$.
For $n\geq \alpha_1$ the weight 
$-\sum_i\alpha_i\epsilon_{N-s+i}+n\varpi_N-n\varpi_{N-s}$ is an anti-dominant
polynomial weight, so 
$\sum_{n\geq\alpha_1}\Gamma(Z,S^\alpha(\cS)(n))$
has a good filtration by transitivity of induction \cite[I.3.5, I.5.12]{Jantzen}.
Moreover,  we have $H^i(Z, S^\alpha(\cS)(n))=0$ for $n\geq \alpha_1$, $i>0$,
by the universal coefficient theorem \cite[A.X.4.7]{Bourbaki}, 
Kempf vanishing \cite[Proposition II.4.6 (c)]{Jantzen}
and the spectral sequence for 
$\ind_P^G\circ\ind_{B^+}^P$ \cite[Proposition I.4.5(c)]{Jantzen}.
Fix $I=(0,\dots,0,\alpha_1)$ and consider $\Gamma_*(\cF\boxtimes S^\alpha(\cS))_{\geq I}$.
By \cite[Theorem 12]{Kempf} we may rewrite it as $\sum_{n\geq\alpha_1}\Gamma(Z,\Gamma_*(\cF)\otimes_\k
S^\alpha(\cS)(n))$, which equals $\Gamma_*(\cF)\otimes_\k\sum_{n\geq\alpha_1}\Gamma(Z,
S^\alpha(\cS)(n))$ by the universal coefficient theorem.
So by Lemma \ref{goodGgood} we get that $\Gamma_*(\cF\boxtimes S^\alpha(\cS))_{\geq I}$ 
has finite Grosshans filtration dimension 
and similarly $\St_r\otimes\St_r\otimes\Gamma_*(\cF\boxtimes S^\alpha(\cS))_{\geq I}$
has good Grosshans filtration for $r\gg0$. Apply Lemma \ref{half}.
\end{proof}

\begin{Lemma}
For $n\gg 0$ the sheaf 
$$(\pi_{12})_*\left(\pi_{13}^*(M^\sim)\otimes\bigg(\cO(nE)\boxtimes\cO(n)\bigg)
\otimes\pi_{23}^*(\bigwedge^i\cE)\right)$$ is
negligible.
\end{Lemma}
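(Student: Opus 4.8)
The plan is to unwind the direct image along $\pi_{12}$ using the Künneth lemma in the $Z$-direction, so as to express the sheaf in question as an exterior tensor product over $Y$ of something already known to be negligible. Write $p:Y\times Z\times Z\to Y\times Z$ for the projection $\pi_{12}$, and think of the three projections $Z\times Z\to Z$ as $q_1,q_2$ together with the structure map to a point. The key observation is that $\pi_{13}^*(M^\sim)\otimes(\cO(nE)\boxtimes\cO(n))\otimes\pi_{23}^*(\bigwedge^i\cE)$ is, along the fibers of $p$ (which are copies of the third factor $Z$), just a twist of $\bigwedge^i\cE$ restricted to $\{pt\}\times Z$; the factor $\pi_{13}^*(M^\sim)\otimes(\cO(nE)\boxtimes\cO(n))$ is pulled back from $Y\times Z$ via the \emph{first two} coordinates and hence is constant along those fibers. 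So by the projection formula $p_*$ of the whole thing equals $\big(M^\sim\otimes\cO(nE)\boxtimes\cO(n)\big)\otimes_{\cO_{Y\times Z}}\big(\text{something pulled back from }Z\text{ in the middle factor}\big)$, where the last factor is $R^0$ of the pushforward along $Z\times Z\to Z$ (first projection) of $\bigwedge^i\cE\otimes q_2^*\cO(n)$.

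First I would record that, for $n\gg0$, the higher direct images of $\bigwedge^i\cE\otimes(\text{Serre twist } n)$ along either projection vanish and the zeroth direct image has good filtration fiberwise. This is exactly the content of the previous Lemma applied to each graded piece of the filtration of $\bigwedge^i\cE$: the associated graded of $\bigwedge^i\cE$ is a direct sum of terms $S^\alpha q_1^*(\cS)\otimes(S^{\tilde\alpha}q_2^*(\cQ))^\#$, and a sufficiently high twist of each such term has vanishing higher cohomology on $Z$ and sections with good filtration — the $\cQ$-factor, being a quotient bundle with \emph{dominant} (rather than anti-dominant) weights, is precisely where one needs the twist to be large, and the universal coefficient theorem plus Kempf vanishing handle it as in the earlier Lemma. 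Via Lemma \ref{Kunneth} (Künneth), applied over $S=Y\times Z$ with the two $Z$-factors, this turns $p_*$ into an honest exterior tensor product: $p_*\big(\cdots\big)$ has a finite filtration whose associated graded consists of sheaves of the form $\big(\cF\boxtimes S^\alpha(\cS)\big)(\text{twist})$ on $X=Y\times Z$, where $\cF$ is a coherent $G$-equivariant sheaf on $Y$ built from $M^\sim$ (a twist of a direct image of $M^\sim\otimes(S^{\tilde\alpha}\cQ)^\#$-type data).

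Next I would invoke the preceding Lemma directly: each $\cF\boxtimes S^\alpha(\cS)$ is negligible, hence so is each twist of it by Lemma \ref{half} and the remark that negligibility is insensitive to a shift in the grading, and then the two-out-of-three statement Lemma \ref{two/three} propagates negligibility up the finite filtration of $p_*(\cdots)$. Finally, to conclude that the \emph{sheaf} is negligible in the sense of the Definition, one passes to $\Gamma_*$; here one uses that for $n\gg0$ the twist is high enough that $R^{>0}p_*$ vanishes (so there are no correction terms), that $\Gamma_*$ of an exterior tensor product splits as predicted by Künneth at the level of global sections of high twists on $X$, and Lemma \ref{half} to ignore the finitely many low degrees.

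The main obstacle is bookkeeping the \emph{two} independent largeness conditions on $n$ — one to kill $R^{>0}$ along the fibers of $p$ and make Künneth applicable, the other (built into the previous Lemma's proof) to force the $S^{\tilde\alpha}(\cQ)^\#$-factors into the anti-dominant/good-filtration range — and checking that these can be met simultaneously for all the finitely many partitions $\alpha$ of $i$ that occur. Everything else is a routine combination of the projection formula, Lemma \ref{Kunneth}, Lemma \ref{two/three}, Lemma \ref{half}, and the immediately preceding Lemma; the universal-coefficient passage from $\Z$ to $\k$ is used tacitly as per Remark \ref{universal coefficient reasoning}.
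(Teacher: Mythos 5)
Your overall plan is the right one---filter $\bigwedge^i\cE$ into the pieces $S^\alpha\pi_1^*(\cS)\otimes(S^{\tilde\alpha}\pi_2^*(\cQ))^\#$, use the high twist to make $(\pi_{12})_*$ exact on this filtration, identify each layer as a twist of $\cF\boxtimes S^\alpha(\cS)$, and then invoke the previous Lemma together with the two-out-of-three property. And your final description of the layers, ``$(\cF\boxtimes S^\alpha(\cS))(\text{twist})$ with $\cF$ a direct image to $Y$ of $M^\sim\otimes(S^{\tilde\alpha}\cQ)^\#$-type data,'' is exactly what the paper gets. But your ``key observation'' along the way is false, and it would wreck the argument if taken at face value.

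You assert that $\pi_{13}^*(M^\sim)\otimes\bigl(\cO(nE)\boxtimes\cO(n)\bigr)$ is pulled back via the first two coordinates and is therefore constant along the fibers of $p=\pi_{12}$. It is not: $\pi_{13}^*(M^\sim)$ is pulled back via the first and \emph{third} coordinates, and the $\cO(n)$-factor in $\cO(nE)\boxtimes\cO(n)$ lives on the third factor; both vary along the fibers of $\pi_{12}$. (Indeed this is the whole point of the resolution-of-the-diagonal trick: if $M^\sim$ were pulled back via $\pi_{12}$ the projection formula would never give you $M^\sim$ back from $(\pi_{12})_*(\cdots\otimes\cO_{\diag Z})$.) As a consequence your intermediate formula, in which $p_*(\cdots)$ appears as a tensor product with $M^\sim$ still sitting as a naked factor, is not correct and is also a type mismatch: $M^\sim\otimes\cO(nE)\boxtimes\cO(n)$ lives on $Y\times Z\times Z$, not on $Y\times Z$. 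The clean replacement, which the paper uses, is flat base change in the Cartesian square formed by $\pi_{12},\pi_{13}$ and $f\colon Y\times Z\to Y$: one has $(\pi_{12})_*\circ\pi_{13}^*=f^*\circ f_*$ (\cite[Proposition 9.3]{Hartshorne}), and after first pulling the factors $\cO(nE)$ and $S^\alpha(\cS)$ (which really are $\pi_{12}$-pullbacks) out via the projection formula, one applies this to $M^\sim\otimes\pi_Z^*\bigl((S^{\tilde\alpha}\cQ)^\#(n)\bigr)$. That gives directly $\cF=f_*\bigl(M^\sim\otimes\pi_Z^*((S^{\tilde\alpha}\cQ)^\#(n))\bigr)$ on $Y$, and the layer is $(\cF\boxtimes S^\alpha(\cS))(I)$ as you wanted. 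Note also that your invocation of Lemma \ref{Kunneth} over $S=Y\times Z$ is outside that lemma's stated hypotheses (there $S$ must be affine, and $\cF$ and $\Gamma(X,\cF)$ must be flat over $R$, which $M^\sim$ need not be); base change plus projection formula is what actually makes the computation go through without flatness assumptions on $M^\sim$.
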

\begin{proof}The sheaf $\cO(E)\boxtimes\cO(1) $ is ample.
So \cite[Theorem 8.8]{Hartshorne} the sheaf in the Lemma
has a filtration with layers of the form
$$(\pi_{12})_*\left(\pi_{13}^*(M^\sim)\otimes\bigg(\cO(nE)\boxtimes\cO(n)\bigg)
\otimes\pi_{23}^*\bigg( S^\alpha (\cS)\boxtimes \cG\bigg)\right).$$
Say $f:Y\times Z\to Y$ is the projection. It is flat and 
by \cite[Proposition 9.3]{Hartshorne} we have $(\pi_{12})_*\circ\pi_{13}^*=f^*\circ f_*$. Now use this and
a projection formula for $(\pi_{12})_*$ to rewrite the layer
in the form $(\cF\boxtimes S^\alpha(\cS))(I)$
for some $I\in\Theta$, with $I$ depending on $n$.
\end{proof}

\begin{Lemma}
The Koszul complex
$$0\to \bigwedge^d\cE\to\cdots\to\cE\to\cO_{Z\times Z}\to \cO_{\diag Z}\to 0$$
remains exact when applying $\pi_{13}^*(M^\sim)\otimes\pi_{23}^*(?)$.
\end{Lemma}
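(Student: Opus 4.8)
The statement asserts that the Koszul complex resolving $\cO_{\diag Z}$ on $Z\times Z$ stays exact after applying the functor $\pi_{13}^*(M^\sim)\otimes\pi_{23}^*(-)$ on $Y\times Z\times Z$. The key point is \emph{flatness}: the original Koszul complex on $Z\times Z$ is a resolution of $\cO_{\diag Z}$, and each term $\bigwedge^i\cE$ is a locally free (hence flat) $\cO_{Z\times Z}$-module, so by the standard argument the complex, together with its augmentation $\cO_{\diag Z}$, is actually a complex of flat sheaves which is exact; being split as a complex of abelian sheaves locally (or, more simply, being exact with flat terms) it remains exact after any base change or flat pullback.

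\emph{First step.} I would pull the Koszul complex back along $\pi_{23}:Y\times Z\times Z\to Z\times Z$. Since $\pi_{23}$ is a flat morphism (projection from a product over the flat base $Z$), the functor $\pi_{23}^*$ is exact, so $\pi_{23}^*$ applied to the whole Koszul complex is still an exact complex on $Y\times Z\times Z$, with terms $\pi_{23}^*(\bigwedge^i\cE)$ that are locally free, and with $\pi_{23}^*(\cO_{\diag Z})=\cO_{Y\times\diag Z}$ at the end.

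\emph{Second step.} Now I would tensor with $\pi_{13}^*(M^\sim)$. Here one cannot invoke exactness of the functor $\otimes\,\pi_{13}^*(M^\sim)$ in general, since $M^\sim$ need not be flat over $\k$ and $\pi_{13}$ is not flat in a way that helps with $M^\sim$'s $\k$-torsion. Instead I would use flatness of the \emph{complex}: the exact sequence $0\to\bigwedge^d\cE\to\cdots\to\cE\to\cO_{Z\times Z}\to\cO_{\diag Z}\to0$ has all terms except possibly the last two locally free, and more to the point the truncated complex $0\to\bigwedge^d\cE\to\cdots\to\cE$ together with its image (the ideal sheaf $\cI_{\diag Z}$) gives, after splicing, short exact sequences $0\to B_i\to\bigwedge^i\cE\to B_{i-1}\to0$ with $B_{-1}=\cI_{\diag Z}$. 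Each $\bigwedge^i\cE$ is locally free, so each $B_i$ is locally a second syzygy and in fact $\cO_{Z\times Z}/\cI_{\diag Z}=\cO_{\diag Z}$, being the structure sheaf of the diagonal, is locally cut out by a regular sequence (the Koszul complex is built from a section of a bundle whose zero scheme $\diag Z$ has the expected codimension $d$); hence $\cO_{\diag Z}$ has finite Tor-dimension $d$ over $\cO_{Z\times Z}$ and each $B_i$ is flat over $\cO_{Z\times Z}$. Thus after pulling back along the flat map $\pi_{23}$ all the terms $\pi_{23}^*(\bigwedge^i\cE)$ and the syzygies $\pi_{23}^*(B_i)$ are flat over $\cO_{Y\times Z\times Z}$, and tensoring the short exact sequences $0\to\pi_{23}^*(B_i)\to\pi_{23}^*(\bigwedge^i\cE)\to\pi_{23}^*(B_{i-1})\to0$ with \emph{any} sheaf, in particular with $\pi_{13}^*(M^\sim)$, preserves exactness. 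Splicing these back together yields exactness of $\pi_{13}^*(M^\sim)\otimes\pi_{23}^*$ applied to the full Koszul complex, with the final term $\pi_{13}^*(M^\sim)\otimes\cO_{Y\times\diag Z}$.

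\emph{Main obstacle.} The only delicate point is the claim that the Koszul terms and syzygies are flat over $\cO_{Z\times Z}$, equivalently that $\diag Z\subset Z\times Z$ is cut out locally by a regular sequence. This is exactly the statement recalled in the previous section: the section of $\cHom(\pi_1^*\cS,\pi_2^*\cQ)$ vanishes on $\diag Z$, and the rank $d$ of $\cE$ equals the codimension of $\diag Z$, which is precisely the condition ensuring the Koszul complex is a resolution and $\cO_{\diag Z}$ is perfect of projective dimension $d$ over $\cO_{Z\times Z}$. Given that (true over $\Z$, hence after the base change to $\k$ already performed), the argument above is routine homological algebra. Everything is $G$-equivariant since all the maps and sheaves in sight are.
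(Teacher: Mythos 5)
Your reduction to short exact sequences is the right shape of argument, but the pivotal claim --- that each syzygy sheaf $B_i$ in the Koszul complex is flat over $\cO_{Z\times Z}$ --- is false, and the lemma does not follow from it. Finite Tor-dimension of $\cO_{\diag Z}$ only says that the \emph{last} syzygy (namely $\bigwedge^d\cE$ itself) is flat; it says nothing about the intermediate ones. Indeed $B_{-1}=\cI_{\diag Z}$ cannot be locally free for $d\geq 2$: if it were, the short exact sequence $0\to\cI_{\diag Z}\to\cO_{Z\times Z}\to\cO_{\diag Z}\to 0$ would exhibit $\cO_{\diag Z}$ as having projective dimension at most $1$, contradicting (by Auslander--Buchsbaum, say) that its projective dimension equals the codimension $d$. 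So when you tensor $0\to\pi_{23}^*(B_i)\to\pi_{23}^*(\bigwedge^i\cE)\to\pi_{23}^*(B_{i-1})\to0$ with $\pi_{13}^*(M^\sim)$, exactness requires the vanishing of $\mathcal Tor_1\bigl(\pi_{23}^*(B_{i-1}),\pi_{13}^*(M^\sim)\bigr)$, which by dimension shift is exactly the assertion that $\pi_{13}^*(M^\sim)$ and $\pi_{23}^*\cO_{\diag Z}$ are Tor-independent --- the thing you are trying to prove, not a free consequence of flatness of the Koszul terms.

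To close the gap you need to supply an argument for Tor-independence itself. Two standard routes, both of which the paper invokes: (i) work locally, noting that the ideal of $\diag Z$ is generated by a sequence of the form $(z-w)$ in coordinates where $\pi_{13}^*(M^\sim)$ does not involve $w$ at all, so that sequence is regular on $\pi_{13}^*(M^\sim)\otimes_{\cO_{Y\times Z}}\cO_{Y\times Z\times Z}$; or (ii) argue globally with a double complex: choose a resolution $F_\bullet\to M^\sim$ by vector bundles, form $\pi_{13}^*(F_\bullet)\otimes\pi_{23}^*(K_\bullet)$ where $K_\bullet$ is the (flat-termed) Koszul complex, and compare the two spectral sequences of the double complex. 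Since the $F_p$ and the $K_q$ are vector bundles, taking homology first in one direction gives $j^*(F_\bullet)$ (with $j:Y\times\diag Z\to Y\times Z$ the isomorphism induced by $\pi_{13}$), hence homology concentrated in degree $0$; taking homology first in the other direction gives $\pi_{13}^*(M^\sim)\otimes\pi_{23}^*(K_\bullet)$. Matching the two gives the desired exactness. Either of these, inserted in place of the false flatness claim, repairs your proof; the rest of what you wrote (reducing to the pulled-back Koszul complex along the flat map $\pi_{23}$, $G$-equivariance of everything) is fine.
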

\begin{proof}One is basically saying that
$\pi_{13}^*(M^{\sim})$ and
$\cO_{\diag Z}$
are Tor independent quasi-coherent sheaves on $Z\times Z$.
This is local and can be checked by computing in suitable coordinates.
We argue more globally.
Let $j$ be the isomorphism $Y\times\diag Z\to Y\times Z$ induced by $\pi_{13}$.
Write $K_\bullet=0\to K_d\to\cdots\to K_0\to 0$ for 
$0\to \bigwedge^d\cE\to\cdots\to\cE\to\cO_{Z\times Z}\to  0$.
Let $F_\bullet=\cdots\to F_1\to F_0\to 0$ be a resolution of $M^\sim$ by vector bundles.
Consider the homology of the total complex of the double complex 
$\pi_{13}^*(F_\bullet)\otimes \pi_{23}^*K_\bullet$. On the one hand this homology is the
homology of $\pi_{13}^*(F_\bullet)\otimes\pi_{23}^*\cO_{\diag Z}$, which is just
$j^*(F_\bullet)$. So it is concentrated in degree zero, where its homology is $j^*(M^\sim)$.
On the other hand it is the homology of $\pi_{13}^*(M^\sim)\otimes\pi_{23}^*(K_\bullet)$.
\end{proof}

\subsection*{End of proof of Proposition \ref{Picardgraded}}
Proposition \ref{Picardgraded} now follows from

\begin{Lemma}\label{endproofPicardgraded}
$M^\sim$ is negligible.
\end{Lemma}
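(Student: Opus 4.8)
The plan is to extract the desired statement from the Koszul resolution of the diagonal via the direct-image functor $(\pi_{12})_*$, exactly along the lines set out in the Plan paragraph above. Concretely, fix the \CGD-module $M$ and apply $\pi_{13}^*(M^\sim)\otimes\pi_{23}^*(-)$ to the Koszul complex
$$0\to\textstyle\bigwedge^d\cE\to\cdots\to\cE\to\cO_{Z\times Z}\to\cO_{\diag Z}\to 0.$$
By the previous Lemma this remains exact, and after truncating off the term $\cO_{\diag Z}$ we get a resolution
$$0\to\pi_{13}^*(M^\sim)\otimes\pi_{23}^*(\textstyle\bigwedge^d\cE)\to\cdots\to\pi_{13}^*(M^\sim)\otimes\pi_{23}^*(\cE)\to\pi_{13}^*(M^\sim)\to 0$$
of $\pi_{13}^*(M^\sim)$, which under the isomorphism $j$ identifies with $M^\sim$ on $X=Y\times Z\cong Y\times\diag Z$. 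Tensoring with the ample sheaf $\cO(nE)\boxtimes\cO(n)$ for $n\gg0$ makes all higher direct images along $\pi_{12}$ vanish (Serre, \cite[Theorem 8.8]{Hartshorne}), so $(\pi_{12})_*$ applied to this twisted resolution stays exact and yields a finite resolution of $M^\sim(nE)$ (up to the twist, which is harmless since $\cM$ negligible iff $\cM(I)$ negligible) whose terms are the sheaves
$$(\pi_{12})_*\left(\pi_{13}^*(M^\sim)\otimes\big(\cO(nE)\boxtimes\cO(n)\big)\otimes\pi_{23}^*(\textstyle\bigwedge^i\cE)\right),\qquad i=1,\dots,d.$$

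Next I would invoke the penultimate Lemma, which says precisely that each of these terms is negligible for $n\gg0$. Since ``negligible'' for a $G$-equivariant coherent sheaf means that its $\Gamma_*$ is a negligible $G$-module, I pass to global sections via $\Gamma_*$. One more application of Serre vanishing (increasing $n$ if necessary) makes the resolution stay exact after $\Gamma_*$, or rather exact in high enough degrees; here I would use the Lemma that an exact sequence of equivariant coherent sheaves gives an exact sequence of the modules $\Gamma_*(-)_{\geq I}$ for suitable $I$, together with Lemma \ref{half} to reduce to the truncation. Then I have a finite exact complex of $G$-modules whose terms, except possibly the $0$-th, are negligible, and whose homology is concentrated in degree $0$ where it is (a truncation of) $\Gamma_*(M^\sim(nE))$. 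The Lemma on complexes with negligible homology and negligible terms (the last unnamed Lemma of Section 3) then forces $\Gamma_*(M^\sim(nE))_{\geq I}$ to be negligible, hence $M^\sim$ is negligible by Lemma \ref{half} and the twist-invariance.

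The one point needing slight care—and the step I'd flag as the main obstacle—is the bookkeeping of the various Serre twists and truncations: the $n$ needed for Serre vanishing of $(\pi_{12})_*$, the $n$ needed so that the earlier Lemma applies to each $\bigwedge^i\cE$ layer, and the degree $I$ in the $\Gamma_*(-)_{\geq I}$ statements all have to be chosen compatibly and uniformly in $i$ (there are only finitely many $i$, so this is possible). One must also make sure the resulting complex of modules is genuinely a complex of $G$-modules with the asserted homology, i.e.\ that $\Gamma_*$ of the truncated Koszul resolution computes $\Gamma_*(M^\sim)$ in high degrees and not merely up to higher direct images—this is where Serre vanishing for the twisted terms is used a second time. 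Modulo this routine coordination of indices, the statement follows formally by stringing together the cited Lemmas, and this finishes the proof of Proposition \ref{Picardgraded}.
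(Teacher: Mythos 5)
Your proposal follows exactly the paper's argument: tensor the Koszul resolution by $\pi_{13}^*(M^\sim)$, twist high, push forward along $\pi_{12}$ using Serre vanishing, invoke the penultimate Lemma for negligibility of the $\bigwedge^i\cE$ layers, pass to $\Gamma_*(-)_{\geq I}$ via the exactness Lemma and \ref{half}, and finish with the two-out-of-many Lemma on complexes. One small slip in the exposition: after dropping $\cO_{\diag Z}$ the remaining complex is not a resolution of $\pi_{13}^*(M^\sim)$ (which is one of its terms) but has cokernel $\pi_{13}^*(M^\sim)\otimes\pi_{23}^*\cO_{\diag Z}=j^*(M^\sim)$ supported on $Y\times\diag Z$, which after $(\pi_{12})_*$ gives $M^\sim(I)$ for a suitable $I\in\Theta$; this is what the paper records and what you need.
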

\begin{proof}
From the Koszul complex and the two previous Lemmas we conclude \cite[Theorem 8.8]{Hartshorne}
that for $n\gg 0$ 
the sheaf $$(\pi_{12})_*\left(\pi_{13}^*(M^\sim)\otimes\bigg(\cO(nE)\boxtimes\cO(n)\bigg)
\otimes\pi_{23}^*(\cO_{\diag(Z)})\right)$$ is
negligible. This sheaf equals $M^\sim(I)$ for some $I\in \Theta$.
\end{proof}

\section{Differently graded Cox rings}
Let $c:\{1,\dots,r\}\to\{1,\dots,q\}$ be surjective.
Put $\Lambda=\Z^q$. We have a \emph{contraction map}, also denoted $c$,
from $\Theta$ to $\Lambda$ with $c(I)_j=\sum_{i\in c^{-1}(j)}I_i$.
Through this contraction we can view our $\Theta$-graded $C$ as $\Lambda$-graded.
We now have the following generalization of Proposition \ref{Picardgraded}:

\begin{Proposition}\label{retractedgraded}
Every \CGL-module is negligible.
\end{Proposition}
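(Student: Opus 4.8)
The plan is to reduce Proposition \ref{retractedgraded} to Proposition \ref{Picardgraded} by induction on how far the $\Lambda$-grading is from the original $\Theta$-grading, i.e.\ on $r-q$. When $r=q$ the contraction $c$ is a bijection and a $CG\Lambda$-module is just a $CG\Theta$-module after relabelling the basis of the grading group, so we are done by Proposition \ref{Picardgraded}. For the inductive step, choose some $j\in\{1,\dots,q\}$ with $|c^{-1}(j)|\geq 2$, and pick two distinct indices $i_1,i_2\in c^{-1}(j)$. The idea is to build from the given $\Lambda$-grading a $\Lambda'$-grading with $\Lambda'=\Z^{q+1}$, refining the $j$-th coordinate into two: informally, we want to split the degree in the $j$-direction into an $i_1$-part and an $(i_2\cup\text{rest})$-part, so that the new contraction $c':\Theta\to\Lambda'$ has $c$ factoring as $\Theta\xrightarrow{c'}\Lambda'\to\Lambda$, and $r-(q+1)<r-q$.

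The key point, as the introduction foretells, is that although $M$ itself is only $\Lambda$-graded, one can equip an \emph{associated graded} of $M$ with the finer $\Lambda'$-grading. Concretely, filter $M$ by the pieces coming from the $i_1$-degree in $C$ (which is well defined since $C$ is genuinely $\Theta$-graded): set $F_pM=\sum_{a\le p}C_{a\,e_{i_1}+(\text{anything orthogonal})}\cdot M$ suitably formulated so that $F_\bullet M$ is an exhaustive filtration by $CG\Lambda$-submodules, with the property that $\gr^F M$ carries a natural $CG\Lambda'$-module structure that collapses back to the original $CG\Lambda$-module $M$ only after we forget one coordinate. Each $\gr^F_p M$ is then a $CG\Lambda'$-module, hence negligible by the inductive hypothesis (applied to the contraction $\Theta\to\Lambda'$, which collapses fewer coordinates). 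Since negligibility only depends on the underlying $\SL_N$-module structure and is stable under extensions by Lemma \ref{two/three}, and the filtration is finite in each $\Lambda$-degree because $M$ is noetherian over $C$, we conclude that $M$ is negligible.

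I expect the main obstacle to be purely bookkeeping: setting up the refined grading group $\Lambda'$ and the filtration $F_\bullet M$ so that (i) $F_\bullet M$ is genuinely a filtration by $CG\Lambda$-submodules with noetherian, hence eventually constant, behaviour in each degree; (ii) $\gr^F M$ really is a $CG\Lambda'$-module, i.e.\ the $C$-action respects the new $\Z^{q+1}$-grading — this is exactly the assertion that ``an associated graded of $M$ has a multigrading that is collapsed a little less''; and (iii) the new contraction $\Theta\to\Lambda'$ is still surjective with one fewer fibre-collapse, so the inductive hypothesis genuinely applies. Once these three compatibilities are in place, the homological conclusion (negligibility passes up the finite filtration) is immediate from Lemma \ref{two/three} and the fact that a shift in the grading makes no difference to negligibility.
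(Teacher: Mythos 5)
Your plan is essentially the paper's own proof: filter $M$ so that the associated graded acquires a refined $\Z^{q+1}$-grading to which the inductive hypothesis applies. Your induction on $r-q$ is the same as the paper's descending induction on $q$ (with $r$ fixed), and your choice of a fibre $c^{-1}(j)$ of size $\geq 2$ is the paper's normalization $c(r-1)=c(r)=q$ up to permuting the factors of $\Theta$. So the approach matches.

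Two details need tightening. First, the filtration as you write it,
$F_p M=\sum_{a\le p}C_{a\,e_{i_1}+\cdots}\cdot M$, is trivial: $0\in\Theta$ has $i_1$-coordinate $0\le p$, so $C_0M=M$ forces $F_pM=M$ for all $p\ge0$. What works (and is what the paper does) is the \emph{decreasing} filtration $M_{\geq i}=\m^iM$, where $\m$ is the irrelevant ideal of the Cox factor $A\langle s\rangle$ in the $i_1$-slot, viewed inside $C$. Then $\gr M=\bigoplus_i \m^iM/\m^{i+1}M$ is a module over $\gr C\cong C$, with the extra $\Z$-coordinate recording the power of $\m$; concretely
\[(\gr M)_I=\gr^{I_{q+1}}M_{(I_1,\dots,I_{q-1},I_q+I_{q+1})},\]
and this $\Z^{q+1}$-grading is a contraction of $\Theta$ through a surjection with one fewer identification, so the induction applies.

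Second, the conclusion needs the uniformity you only gesture at. Knowing each $M_I$ is negligible (via its finite filtration with negligible layers) is not enough, since $M=\bigoplus_I M_I$ and the vanishing ranges could a priori drift with $I$. The fix is to observe that since $\gr M$ \emph{as a whole} is negligible, there are fixed $i_0$ and $r_0$ that kill $H^{i_0}(\SL_N,\gr M\otimes_\k\k[\SL_N/U])$ and $H^1(\SL_N,\gr M\otimes_\k\St_{r_0}\otimes_\k\St_{r_0}\otimes_\k\k[\SL_N/U])$; since every layer of the finite filtration of each $M_I$ is a summand of $\gr M$, the same $i_0$ and $r_0$ work for every $M_I$ simultaneously, hence for $M$. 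With these two points made precise, your argument is the paper's.
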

\begin{proof}
This will be proved by descending induction on $q$, with fixed $r$.
The case $q=r$ is clear. So let  $q<r$ and assume
the result for larger values of $q$.
We may assume $c(r-1)=c(r)=q$. (Otherwise rearrange the factors.)
Recall $X=\Proj(C)$, $X=Y\times Z$, with $Y=\Proj(C^{\widehat r})$, 
$Z=\Proj(A\langle s\rangle)$.

\begin{Notation}
Let $\m$ be the irrelevant 
 ideal $\bigoplus_{i>0}A\langle s\rangle _i$
of $A\langle s\rangle$. If $M$ is a \CGL-module, put $M_{\geq i}=\m^iM$, and
$\gr^iM=M_{\geq i}/M_{\geq i+1}$. If $I\in \Lambda$, put $(M_I)_{\geq i}=M_I\cap \m^iM$,
and $\gr^iM_I=(M_I)_{\geq i}/(M_I)_{\geq i+1}$.
We put a $\Z^{q+1}$-grading on $\gr M=\bigoplus_i\gr^i M$ with 
$$(\gr M)_I=\gr^{I_{q+1}}M_{(I_1,\dots,I_{q-1},I_q+I_{q+1})}.$$
In particular all this applies when $M=C$. Then $\gr C$ may be identified with $C$
and the $\Z^{q+1}$-grading on $\gr C$ is a contracted grading to which the inductive
assumption applies. Write $\Phi=\Z^{q+1}$. Then $\gr M$ is a $CG\Phi$-module.
\end{Notation}

Let $M$ be a \CGL-module.
By the inductive assumption $\gr M$ 
is negligible. So $\gr^iM_I$ is negligible. As the filtration on $M_I$ is finite, it follows that 
$M_I$ is negligible. But we need to do a little better.
We must show that when $i$ and $r$ are so big that $H^i(\SL_N,\gr M\otimes_\k\k[\SL_N/U])$  and  
$H^1(\SL_N,\gr M\otimes_\k\St_r\otimes_\k\St_r\otimes\k[\SL_N/U])$
vanish, then the same $i$  and $r$ work for all $M_I$ simultaneously. This is clear too, so $M$ is negligible.

This finishes the proof of Proposition \ref{retractedgraded}.
\end{proof}

\section{Variations on the Grosshans grading}\label{G=SL}
In this section we will be concerned with representations of $\SL_N$.
{\em Mutatis mutandis} everything also
applies to other connected reductive groups. We now write $G=\SL_N$,
with subgroups $B^+$, $B^-$, $T$, $U$ defined in the usual manner. (So they are now
the intersections with $\SL_N$ of the subgroups of $\GL_N$ that had these names.)
As explained above, the Grosshans graded $\gr V$ of an $\SL_N$-module $V$ has
a $\Z$-grading. We also need a $\Lambda$-graded version, where $\Lambda$ is 
the weight lattice of $\SL_N$.

Following Mathieu \cite{Mathieu G}
we choose a second linear height function $E:\Lambda\otimes \RR\to \RR$
with $E(\alpha)>0$ for every positive root $\alpha$, but now with $E$ injective
on $\Lambda$. We
define a total order on weights by first ordering them by Grosshans height,
then for fixed Grosshans height by $E$. With this total order, denoted $\leq$,  we put:

\begin{Definition}
If $V$ is a $G$-module, and $\lambda$ is  a weight,
then $V_{\leq\lambda}$ denotes the largest $G$-submodule all whose weights
$\mu$ satisfy $\mu\leq\lambda$ in the total order.
For instance, $V_{\leq0}$ is the module of invariants
$V^G$. Similarly $V_{<\lambda}$ denotes the largest $G$-submodule all 
whose weights
$\mu$ satisfy $\mu<\lambda$.  Note that $V\mapsto V_{\leq \lambda}$ is a truncation functor for a 
saturated set
of dominant weights \cite[Appendix A]{Jantzen}.
 So this functor fits in the usual
highest weight category picture.
We form the
$\Lambda$-graded module
$$\gr_{\Lambda} V=\bigoplus_{\lambda\in \Lambda}V_{\leq\lambda}/V_{<\lambda}.$$
Let us define the socle of a $B^+$-module $M$ to be $M^U$, because that is what it is when $\k$ is a field.
Each $\gr_\lambda V=V_{\leq\lambda}/V_{<\lambda}$
 has a $B^+$-socle
$(\gr_\lambda V)^U=V^U_\lambda$ of weight
$\lambda$. We always view $V^U$ as a $B^-$-module through restriction
(inflation)
along the
homomorphism $B^-\to T$.
Then $\gr_\lambda V$ embeds naturally in its `hull' $\hull_\nabla(\gr_\lambda V)
=\ind_{B^-}^GV^U_\lambda$.
This  hull has the same $B^+$-socle.
\end{Definition}

If $\lambda$ is not dominant, then $\gr_\lambda V$ vanishes, because
its socle vanishes.
Note that $\bigoplus_{\hgt(\lambda)=i}\gr_{\lambda} V$ is the associated graded of
a filtration of $\gr_i V$, where $\gr_{\lambda} V$ refers to a graded component of $\gr_{\Lambda} V$
and $\gr_i V$ to one of $\gr V$. Both $\gr_{\Lambda} V$ and $\gr V$ embed into the
 hull $\ind_{B^-}^GV^U$, which is $\Lambda$-graded. But while 
$\gr_{\Lambda} V$ is a $\Lambda$-graded submodule of the hull, 
$\gr V$ need only be a $\Z$-graded submodule. Both $\gr_{\Lambda} V$ and $\gr V$
contain the socle of the hull.

Although $\gr_\Lambda V$ need not coincide with $\gr V$ it shares some properties:

\begin{Lemma}\label{Lambda Grosshans}
\begin{enumerate}
\item If $A$ is a finitely generated $\k$-algebra, so is $\gr_\Lambda A$.
\item If $A$ has good Grosshans filtration, then $\gr_\Lambda A$ is isomorphic to $\gr A$ as a
$\k$-algebra.
\end{enumerate}
\end{Lemma}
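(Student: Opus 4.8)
The plan is to establish both statements about $\gr_\Lambda A$ by comparing it, degree by degree in the Grosshans height, with the $\Z$-graded $\gr A$, for which the corresponding facts are already known. First I would record the structural observation made just before the Lemma: for each $i$ the module $\gr_i A$ carries a finite filtration whose associated graded is $\bigoplus_{\hgt(\lambda)=i}\gr_\lambda A$, and both $\gr_\Lambda A$ and $\gr A$ sit inside the common hull $\ind_{B^-}^G A^U$, sharing its $B^+$-socle. Since $A$ is finitely generated over $\k$ and $G$ is reductive, invariant theory (the results of \cite{FvdK} already invoked, e.g.\ \cite[Theorem 12, Theorem 9]{FvdK}) shows the algebra of highest weight vectors $A^U$ is a finitely generated $\k$-algebra; the passage to a graded algebra with respect to a filtration by a finitely generated value monoid then yields (1), because $\gr_\Lambda A$ is generated by the images of a finite generating set of $A$ together with finitely many invariants, the argument being the standard one that a filtered algebra whose associated graded over a noetherian base is generated in bounded degrees is itself finitely generated.

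For (2), the key point is that when $A$ has good Grosshans filtration the embedding $\gr A\hookrightarrow \ind_{B^-}^G A^U$ is an \emph{isomorphism}, so $\gr A$ exhausts the hull. Since $\gr_\Lambda A$ also lies in the hull and contains the socle, and since $\bigoplus_{\hgt(\lambda)=i}\gr_\lambda A$ is the associated graded of a filtration of $\gr_i A$, a dimension/rank count fibrewise (using the universal coefficient reasoning of Remark~\ref{universal coefficient reasoning} to reduce to algebraically closed fields) forces the inclusion $\gr_\Lambda A\subseteq \ind_{B^-}^G A^U = \gr A$ to be an equality at the level of the underlying graded $\k$-modules, after forgetting the finer $\Lambda$-grading down to the $\Z$-grading. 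Concretely: over a field, $\gr A$ being all of $\ind_{B^-}^G A^U$ means $\gr A$ has a good filtration with multiplicity space $A^U_\lambda$ for each $\lambda$; the filtration of $\gr_i A$ with layers $\gr_\lambda A$ then has each layer equal to $\hull_\nabla(\gr_\lambda A)=\ind_{B^-}^G A^U_\lambda$, i.e.\ $\gr_\lambda A$ is already costandard-saturated, so $\gr_\Lambda A=\ind_{B^-}^G A^U=\gr A$ as $G$-modules. It remains to see this identification is multiplicative. The multiplication on $\gr_\Lambda A$ is induced from that of $A$ via the total order $\leq$, and the multiplication on $\gr A$ via the coarser Grosshans-height order; since the product $A_{\leq\lambda}\cdot A_{\leq\mu}$ lands in $A_{\leq\lambda+\mu}$ and in particular in $A_{\le\hgt(\lambda)+\hgt(\mu)}$ in Grosshans height, the natural surjection of graded algebras $\gr_\Lambda A\to \gr A$ (refining the filtration can only add relations, never remove them) is precisely the comparison map, and by the module-level equality just established it is an isomorphism.

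\textbf{Main obstacle.} The routine parts are the finite-generation bookkeeping and the reduction to fields via universal coefficients. The genuine content — and where I expect the real work to lie — is step (2): verifying that the \emph{algebra} structures match, not merely the underlying $G$-modules. One has to be careful that the associated-graded multiplication with respect to the finer total order $\leq$ does not become ``more degenerate'' than the one for the Grosshans grading; the point is the reverse, that the good Grosshans filtration hypothesis on $A$ collapses the discrepancy because each $\gr_\lambda A$ is forced to be costandard. Making this precise requires knowing that $\gr_\lambda A = \ind_{B^-}^G A^U_\lambda$ holds integrally (not just after base change to a field), which is exactly what the hypothesis ``$\gr A\to\hull_\nabla(\gr A)$ is an isomorphism'' supplies, combined with the observation that the hull is $\Lambda$-graded and $\gr_\Lambda A$ is its $\Lambda$-graded submodule containing the socle. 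Once $\gr_\lambda A$ is identified with the corresponding graded piece of the hull for every $\lambda$, both assertions follow, with (1) then being immediate from finite generation of $A^U$ and the Grosshans-theoretic description of $\gr A$.
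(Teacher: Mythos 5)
Your argument for part (1) follows the paper's route in outline: both hinge on finite generation of $A^U$ (this is \cite[Lemma 25]{FvdK}) and the observation that $A^U\cong(\gr_\Lambda A)^U$. The paper then invokes \cite[Lemma 46]{FvdK} to pass from finite generation of the $U$-invariants to finite generation of the algebra; your phrasing ``the argument being the standard one that a filtered algebra whose associated graded over a noetherian base is generated in bounded degrees is itself finitely generated'' is aimed in the wrong direction (that principle deduces finite generation of $A$ from that of $\gr A$, whereas here $A$ is fg and one wants $\gr_\Lambda A$ fg), so you would need to replace it with the cited lemma or an argument of that type.

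For part (2) there is a real gap. You assert a ``natural surjection of graded algebras $\gr_\Lambda A\to\gr A$ (refining the filtration can only add relations, never remove them).'' No such natural algebra map exists: $\gr A$ and $\gr_\Lambda A$ are associated gradeds of $A$ with respect to two different total orders, and in general there is no comparison morphism of algebras between them in either direction. What does exist is the common embedding of both into the hull $\ind_{B^-}^G A^U$, and when $A$ has good Grosshans filtration that embedding for $\gr A$ is an isomorphism. The paper handles the module identification more cleanly than your fibrewise dimension count: since $\gr_iA$ is already a direct sum $\bigoplus_{\hgt\lambda=i}\ind_{B^-}^G(A^U)_\lambda$, the refinement filtration on $\gr_iA$ whose layers are $\gr_\lambda A$ is the direct sum decomposition itself, so passing to its associated graded does literally nothing and $\gr_\Lambda A=\gr A$ as graded modules without any base change to fields. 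The multiplicative comparison — the point you correctly flagged as the ``genuine content'' — is then supplied by Lemma~\ref{extend} (Frobenius reciprocity for the hull): the algebra structures on $\gr A$ and $\gr_\Lambda A$ both restrict to the algebra structure on $A^U$, and such a graded algebra structure on the hull is unique, so they coincide. Your proposal identifies this as the crucial step but stops at a hand-wave; the cited lemma is exactly what closes it.
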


\begin{proof}Let $A$ be a finitely generated $\k$-algebra.
By \cite[Lemma 25]{FvdK} the subalgebra $A^U$ is a finitely generated $\k$-algebra.
But $A^U$ is isomorphic to $(\gr_\Lambda A)^U$, so by \cite[Lemma 46]{FvdK}
the algebra $\gr_\Lambda A$ is finitely generated.
When $A$ has good Grosshans filtration, $\gr_i A$ is already a direct sum of 
modules of the form $\ind_{B^-}^G V_\lambda$, where $B^-$ acts on $V_\lambda$ with weight $\lambda$. 
So then passing to the associated graded of the filtration of $\gr_i A$ makes 
no difference. And the algebra structure on both $\gr A$ and $\gr_\Lambda A$
agrees with the algebra structure on the hull by the next Lemma.
\end{proof}

\begin{Lemma}\label{extend}
Let $A$ have a good Grosshans filtration, so that $\gr A=\hull_\nabla(\gr A)$.
Let $R$ be a $\Z$-graded algebra with $G$-action.
Assume that for each $i$ one has $R_i=(R_i)_{\leq i}$ in the Grosshans filtration. Then every $T$-equivariant
graded algebra homomorphism $R^U\to (\gr A)^U$ extends uniquely to a
$G$-equivariant graded algebra homomorphism $R\to \gr A$.
\end{Lemma}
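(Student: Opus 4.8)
The plan is to build the extension degree by degree, using that $\gr A$ \emph{is} its own $\nabla$-hull, i.e. $\gr A = \ind_{B^-}^G (\gr A)^U$, together with the universal property of induction. First I would fix a $T$-equivariant graded algebra homomorphism $\varphi^U\colon R^U\to (\gr A)^U$. For each $i$ the hypothesis $R_i=(R_i)_{\le i}$ says that $R_i$, as a $G$-module, has all weights $\le i$ in the Grosshans total order, so the natural map $R_i\to \ind_{B^-}^G (R_i)^U = \hull_\nabla(R_i)$ is defined (one is in the highest-weight-category situation for the saturated set of weights of Grosshans height $\le i$, cf.\ the truncation functor discussion and \cite[Appendix A]{Jantzen}); here $(R_i)^U$ is regarded as a $B^-$-module by inflation along $B^-\to T$. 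Composing with $\ind_{B^-}^G(\varphi^U|_{(R_i)^U})$ and with the identification $\ind_{B^-}^G (\gr A)^U_{\le i}=(\gr A)_{\le i}=(\gr A)_i\oplus(\text{lower})$ — more precisely landing in $\gr A$ via $\gr A=\ind_{B^-}^G(\gr A)^U$ — produces a $G$-module map $\varphi_i\colon R_i\to \gr A$. Assembling these gives a $G$-module homomorphism $\varphi\colon R\to \gr A$, and since $\varphi^U$ is graded, $\varphi$ respects the gradings.

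The next step is to check $\varphi$ is an algebra homomorphism. By Frobenius reciprocity / the universal property of $\ind_{B^-}^G$, a $G$-module map into $\gr A=\ind_{B^-}^G(\gr A)^U$ is determined by its composite with the evaluation (``$B^+$-socle'') map $\gr A\to (\gr A)^U$; equivalently, two $G$-maps $R\to\gr A$ agree iff they agree after applying $(-)^U$ and restricting to the socle. So to see that $\varphi(xy)=\varphi(x)\varphi(y)$ for $x,y\in R$ it suffices to compare the two sides after projecting to the $B^+$-socle of $\gr A$ in each graded degree. Both $x\mapsto$ (socle component of $\varphi(x)$) and the analogous map built from $\varphi^U$ and the multiplication must be shown to coincide; since $\varphi^U$ \emph{is} an algebra map on $R^U$, and the multiplication on $\gr A$ is compatible with the multiplication on its socle $(\gr A)^U$ (the algebra structure on the hull restricts to that on $(\gr A)^U$), the socle components match. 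Uniqueness follows by the same token: any $G$-equivariant graded algebra extension $\psi$ of $\varphi^U$ has $\psi^U=\varphi^U$ on $R^U$, and a $G$-map into an induced module is determined by its socle component, so $\psi=\varphi$.

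The main obstacle, I expect, is the compatibility of multiplications with induction: one must argue that the multiplication $\gr A\otimes\gr A\to\gr A$, which a priori is just a $G$-module map, is forced by its restriction to socles, and dually that the product $R_i\otimes R_j\to R_{i+j}$ interacts correctly with the maps $R_i\to\ind_{B^-}^G(R_i)^U$. The clean way to handle this is to observe that for a $\Z$-graded algebra $R$ with $R_i=(R_i)_{\le i}$, the family of socle maps $R_i\to (R_i)^U\hookrightarrow (\gr A)^U$ is multiplicative because taking $U$-invariants is lax monoidal and, on the relevant saturated set of weights, the socle of a product lands in the product of socles; then the universal property upgrades this uniquely to the algebra map $R\to\gr A$. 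This is exactly the kind of highest-weight-category bookkeeping that the paper sets up with its truncation functors, so I would lean on \cite[Appendix A]{Jantzen} and the preceding definitions rather than rederive it, and only spell out the degree-by-degree induction and the socle-determinacy argument in detail.
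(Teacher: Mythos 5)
Your overall strategy — exploit $\gr A=\ind_{B^-}^G(\gr A)^U$ together with Frobenius reciprocity, and then use socle-determinacy to check multiplicativity and uniqueness — is exactly what the paper's one-line proof is pointing at, and the multiplicativity and uniqueness parts of your argument are sound. The one step that would not survive scrutiny as written is the assertion that ``the natural map $R_i\to \ind_{B^-}^G(R_i)^U=\hull_\nabla(R_i)$ is defined.'' For a general $G$-module $V$ there is no natural $G$-map $V\to\ind_{B^-}^G V^U$; the canonical embedding into the $\nabla$-hull exists only after passing to the associated graded $\gr V$, which is precisely why the paper works with $\gr A$ and $\gr_\Lambda A$ rather than with $A$. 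So the intermediate object $\ind_{B^-}^G(R_i)^U$ should be dropped from your construction.

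The fix keeps your plan intact: one should produce the $B^-$-module map $R_i\to ((\gr A)_i)^U$ directly and then apply Frobenius reciprocity once. Since every weight of $R_i$ has Grosshans height $\le i$ and $U^-$ strictly lowers Grosshans height, projection onto the weight spaces of height exactly $i$ is a $B^-$-equivariant map $R_i\to R_i^{(i)}$ (the target carries the trivial $U^-$-action); moreover $R_i^{(i)}\subseteq (R_i)^U$ because raising by $U$ would exceed height $i$. Composing with $\varphi^U$ gives the desired $B^-$-map into $((\gr A)_i)^U$, and Frobenius reciprocity now yields $\varphi_i:R_i\to (\gr A)_i$. With this repaired first arrow, the rest of your argument — that the height-$(i+j)$ weight vectors of $R_i\otimes R_j$ are spanned by $u\otimes v$ with $u,v$ in $R^U$, so multiplicativity reduces to that of $\varphi^U$, and that a $G$-map into an induced module is determined by its $B^-$-component, giving uniqueness — goes through and reproduces the paper's proof in detail.
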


\begin{proof}
Use that $\hull_\nabla(\gr_\Lambda A)$  is an induced module, so that we may use Frobenius reciprocity 
\cite[I Proposition 3.4]{Jantzen}.
\end{proof}

\section{Proof of the main result}
Let us now turn to the proof of Theorem \ref{maingood} for $\SL_N$.
Return to the notations introduced in section \ref{G=GL}. 
Thus $G=\GL_N$, with $T$ its maximal torus.
We assume the $\SL_N$-algebra $A$ has a good Grosshans filtration and $M$ is a
noetherian $A$-module on which $\SL_N$ acts compatibly.
Put $\Lambda=\Z^{N-1}$ and identify $\Lambda$ with a sublattice of $X(T)$
by sending $\lambda\in\Lambda$ to $\sum_i\lambda_i\varpi_i$.
Also identify $\Lambda$ with $X(T\cap\SL_N)$ through the restriction
$X(T)\to  X(T\cap\SL_N)$. Thus a dominant $\lambda\in\Lambda$ gets identified with
a polynomial dominant weight.
For such $\lambda$
we may embed $\gr_\lambda A$ or
$\gr_\lambda M$ into its hull which is the tensor product of 
the Schur module $\nabla_G(\lambda)$ with a
$\k$-module with trivial $G$ action.
On the Schur module $\nabla_G(\lambda)$ the center of $G$ acts through
$\lambda$. This makes it natural to
use the $\Lambda$-grading on $\gr_\Lambda A$ and $\gr_\Lambda M$ 
to extend the action from $\SL_N$ 
to $\GL_N$, making the center of $\GL_N$ act through $\lambda$
on the graded pieces $\gr_\lambda A$ and $\gr_\lambda M$. We do that.

As the algebra
$(\gr_\Lambda A)^U=(\gr A)^U$ is finitely generated by \cite[Lemma 25]{FvdK}
it is also generated by finitely many
weight vectors. Consider one such weight vector $v$, say of weight $\lambda$.
Clearly $\lambda$ is dominant.
If $\lambda=0$, map a polynomial ring $P_v:=\k[x]$ with trivial $G$-action to
 $\gr A$ by substituting $v$ for $x$. Also put $D_v:=1$.
Next assume $\lambda\neq0$. Let $\ell=N-1$ be the rank of $\Lambda$.
Recall the Cox rings $A\langle i\rangle$ of section \ref{section 4}. Define a $T$-action
on the $\Lambda$-graded algebra
$$P=\bigotimes_{i=1}^{\ell}A\langle i\rangle$$
by letting $T$ act on $\bigotimes_{i=1}^{\ell}\Gamma(\Gr(i),\cO(m_i))$
through weight $\sum_im_i\varpi_i$. So now we have a $G\times T$-action on $P$,
and the $T$-action corresponds with the $\Lambda$-grading.
Observe
that by  the tensor product
property \cite[Ch.~G]{Jantzen}
the algebra $P$ has a good filtration for the $G$-action.
Let $D$ be the scheme
theoretic kernel of $ \lambda$.
So $D$ has character group
$X(D)=X(T)/\Z \lambda$ and $D=\Diag(X(T)/\Z \lambda)$ in the notations of
\cite[I.2.5]{Jantzen}.
The subalgebra $P^{1\times D}$ is a graded algebra with
good filtration such that its subalgebra
$P^{U\times D}$ contains a polynomial algebra on one generator $x$ of weight
$\lambda\times \lambda$. In fact, this polynomial subalgebra contains all the
weight vectors in $P^{U\times D}$ whose weight is of the form $\nu\times \nu$.
 The other weight vectors in $P^{U\times D}$
have weights of the form $\mu\times\nu$
with $\nu$ an integer multiple of $\lambda$ and $\mu<\nu$. These other weight vectors
span an ideal in $P^{U\times D}$.
By lemma \ref{extend} one easily constructs
 a $G$-equivariant algebra homomorphism
$P^{1\times D}\to \gr_\Lambda A$ that maps $x$ to $v$.
Write it as $P_v^{1\times D_v}\to \gr_\Lambda A$, to stress the dependence on $v$.

The direct product $D$ of the $D_v$ is a diagonalizable group. It acts on
the tensor product $C$ of the finitely many $P_v$. This $C$ is $\Lambda$-graded.
We have a  graded algebra map $C^D\to \gr_\Lambda A$.
Observe that $\gr_\lambda A=\nabla(\lambda)\otimes J(\lambda)$ where  $J(\lambda)$ is
the $\lambda$ weight space of $A^U$, but with trivial $G$-action.
The map $\gr_\lambda C^D\to \gr_\lambda A$ is of the form 
$\nabla(\lambda)\otimes  J'(\lambda) \to\nabla(\lambda)\otimes  J(\lambda)$
with $G$ acting trivially on $J'(\lambda)$ also.
As each $J'(\lambda)\to J(\lambda)$ is surjective, so is $C^D\to \gr_\Lambda A$.
We have proved

\begin{Lemma}\label{cover}There is a graded $G$-equivariant surjection $C^D\to \gr_\Lambda A$,
where the
$G\times D$-algebra $C$  is a good $G\Lambda$ algebra as in \ref{retractedgraded}. 
\end{Lemma}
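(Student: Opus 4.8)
The plan is to exhibit $\gr_\Lambda A$ as an explicit quotient of a $G\times D$-algebra assembled, one block per generator of the invariant ring, from copies of the Cox rings $A\langle i\rangle$ of section \ref{section 4}. By Lemma \ref{Lambda Grosshans}, since $A$ is a finitely generated $\k$-algebra with good Grosshans filtration, the ring $(\gr_\Lambda A)^U = (\gr A)^U$ is a finitely generated $\k$-algebra (using \cite[Lemma 25]{FvdK}), so I may fix finitely many weight-vector generators $v$, each of a dominant weight $\lambda(v)$. For a generator $v$ with $\lambda(v) = 0$, take $P_v = \k[x]$ with trivial $G$-action, $D_v = 1$, and send $x$ to $v$. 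For $\lambda = \lambda(v)\neq 0$, take $P_v = \bigotimes_{i=1}^{\ell} A\langle i\rangle$ equipped with the auxiliary $T$-action whose weight spaces give a $\Lambda$-grading (so a $G\times T$-action); $P_v$ has a good filtration for $G$ by the tensor product property \cite[Ch.~G]{Jantzen}. Letting $D_v$ be the scheme-theoretic kernel of $\lambda$ in that auxiliary $T$, the invariant ring $P_v^{1\times D_v}$ is a $\Z$-graded algebra with good filtration for $G$, and $P_v^{U\times D_v}$ contains a polynomial subalgebra $\k[x]$ generated by a weight vector of weight $\lambda\times\lambda$; in fact this subalgebra is spanned by the weight vectors of weight $\nu\times\nu$, while the other weight vectors, of weight $\mu\times\nu$ with $\nu\in\Z\lambda$ and $\mu<\nu$, span an ideal. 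I then apply Lemma \ref{extend} to extend the $T$-equivariant graded algebra homomorphism $P_v^{U\times D_v}\to(\gr_\Lambda A)^U$ that sends $x$ to $v$ and kills that ideal to a $G$-equivariant graded algebra homomorphism $P_v^{1\times D_v}\to\gr_\Lambda A$.

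Next I would set $C = \bigotimes_v P_v$ and $D = \prod_v D_v$, acting factorwise. Then $C$ is a tensor product of good-filtration $G$-algebras, hence a good $G$-algebra, and by construction it is of the shape $C_0\otimes A\langle s_1\rangle\otimes\cdots\otimes A\langle s_r\rangle$ (with $C_0$ the polynomial ring contributed by the weight-zero generators) carrying a contracted $\Lambda$-grading, which is exactly the setup of Proposition \ref{retractedgraded}. The homomorphisms coming from the individual blocks multiply together to a $G$-equivariant, $\Lambda$-graded algebra homomorphism $C^D = \bigotimes_v P_v^{1\times D_v}\to\gr_\Lambda A$.

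It remains to see this homomorphism is surjective. Both $C^D$ — a $G$-module direct summand of the good-filtration module $C$ — and $\gr_\Lambda A$ have good Grosshans filtration, so $\gr_\lambda A = \nabla(\lambda)\otimes J(\lambda)$ with $J(\lambda) = (A^U)_\lambda$ carrying the trivial $G$-action, and the $\lambda$-component of $\gr_\Lambda(C^D)$ is likewise $\nabla(\lambda)\otimes J'(\lambda)$ with $J'(\lambda) = ((C^D)^U)_\lambda$; the induced map is $\mathrm{id}_{\nabla(\lambda)}\otimes\phi_\lambda$, where $\phi_\lambda$ is the $\lambda$-component of the algebra homomorphism $(C^D)^U\to(\gr_\Lambda A)^U = (\gr A)^U$. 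That homomorphism hits every chosen generator $v$, hence is onto $(\gr A)^U$, so each $\phi_\lambda$ is onto and $C^D\to\gr_\Lambda A$ is onto. The step that demands real care is the first one — identifying $P_v^{U\times D_v}$ precisely enough to verify the degree hypothesis $R_i = (R_i)_{\leq i}$ required by Lemma \ref{extend}, and checking that the $\Lambda$-grading inherited by $C^D$ really is of the contracted Cox-ring type covered by Proposition \ref{retractedgraded}; the surjectivity part is then routine bookkeeping on $U$-invariants.
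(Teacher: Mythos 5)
Your construction of the $P_v$, the use of Lemma \ref{extend} to extend from $U$-invariants, the formation of $C=\bigotimes_v P_v$ and $D=\prod_v D_v$, and the surjectivity argument via $\nabla(\lambda)\otimes J'(\lambda)\to\nabla(\lambda)\otimes J(\lambda)$ reproduce the paper's proof essentially verbatim. The cautionary remark at the end does not flag a real obstruction: the $\Z$-grading on $P_v^{1\times D_v}$ induced by the multiples of $\lambda$ does satisfy the hypothesis of Lemma \ref{extend}, and $C$ is by construction of the form $C_0\otimes A\langle s_1\rangle\otimes\cdots\otimes A\langle s_r\rangle$ with $\Lambda$-grading a contraction of the natural $\Z^{r}$-grading, so Proposition \ref{retractedgraded} applies.
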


Now recall $M$ is a noetherian $A$-module on which $G$ acts compatibly, meaning
that the structure map $A\otimes M\to M$ is a map of $G$-modules.
Form the `semi-direct product ring' $A\ltimes M$ whose underlying $G$-module
is $A\oplus M$, with product given by
$(a_1,m_1)(a_2,m_2)=(a_1a_2,a_1m_2+a_2m_1)$. By Lemma \ref{Lambda Grosshans} $\gr_\Lambda(A\ltimes M)$ is
a finitely generated algebra, so we get

\begin{Lemma}
$\gr_\Lambda M$ is a noetherian $\gr_\Lambda A$-module.
\end{Lemma}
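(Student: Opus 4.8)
The plan is to exploit the semidirect product ring $A\ltimes M$ introduced just above, together with part (1) of Lemma \ref{Lambda Grosshans}. Since $M$ is noetherian over the finitely generated $\k$-algebra $A$, the ring $A\ltimes M$ is again a finitely generated commutative $\k$-algebra, and $G$ acts on it rationally by $\k$-algebra automorphisms. Hence Lemma \ref{Lambda Grosshans}(1) applies and shows that $\gr_\Lambda(A\ltimes M)$ is a finitely generated $\k$-algebra; as $\k$ is noetherian, $\gr_\Lambda(A\ltimes M)$ is a noetherian ring.

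Next I would identify $\gr_\Lambda(A\ltimes M)$ with the semidirect product $\gr_\Lambda A\ltimes\gr_\Lambda M$. Because $V\mapsto V_{\leq\lambda}$ is a truncation functor for a saturated set of dominant weights, it is exact and commutes with direct sums; applied to the $G$-module decomposition $A\ltimes M=A\oplus M$ it yields $(A\ltimes M)_{\leq\lambda}=A_{\leq\lambda}\oplus M_{\leq\lambda}$, and likewise with $<\lambda$ in place of $\leq\lambda$. Therefore $\gr_\Lambda(A\ltimes M)=\gr_\Lambda A\oplus\gr_\Lambda M$ as $\Lambda$-graded $G$-modules. The multiplication inherited on the associated graded is the one induced by the product on $A\ltimes M$; since $M\cdot M=0$ there, the product of any two homogeneous components of $\gr_\Lambda M$ vanishes, so $\gr_\Lambda(A\ltimes M)$ is precisely the semidirect product $\gr_\Lambda A\ltimes\gr_\Lambda M$, with $\gr_\Lambda M$ a square-zero ideal and $\gr_\Lambda(A\ltimes M)/\gr_\Lambda M=\gr_\Lambda A$.

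To conclude: $\gr_\Lambda M$ is an ideal of the noetherian ring $\gr_\Lambda(A\ltimes M)$, hence a noetherian module over that ring. Since $\gr_\Lambda M$ annihilates itself, this module structure factors through the quotient $\gr_\Lambda A$, and therefore $\gr_\Lambda M$ is a noetherian $\gr_\Lambda A$-module. The only point requiring a little care — the step I expect to be the (mild) main obstacle — is the bookkeeping in the second paragraph: checking that the truncation functors respect the decomposition $A\oplus M$ and that the resulting graded multiplication is the expected semidirect one. Both follow from formal properties of truncation functors for saturated sets of weights; everything else is an immediate application of Lemma \ref{Lambda Grosshans}(1) and the Hilbert basis theorem.
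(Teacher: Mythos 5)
Your proposal is correct and follows exactly the approach the paper intends: form the square-zero extension $A\ltimes M$, apply Lemma~\ref{Lambda Grosshans}(1) to get that $\gr_\Lambda(A\ltimes M)$ is a finitely generated $\k$-algebra, and read off the conclusion. The paper states this in a single sentence preceding the lemma and leaves implicit both the identification $\gr_\Lambda(A\ltimes M)\cong\gr_\Lambda A\ltimes\gr_\Lambda M$ (which holds because truncation commutes with finite direct sums) and the observation that $\gr_\Lambda M$, being a square-zero ideal, is a noetherian module over the quotient $\gr_\Lambda A$; you have correctly supplied those details.
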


This is of course very reminiscent of the proof of the lemma
\cite[Theorem 16.9]{Grosshans book} telling that
$M^G$ is a noetherian module over the finitely generated $k$-algebra
$A^G$. We  will tacitly use
its counterpart for diagonalizable actions, cf. \cite{Borsari-Santos},
\cite[I.2.11]{Jantzen}.

Now this lemma implies that $C\otimes_{C^D}\gr_\Lambda M$ is a \CGL-module, so by Proposition 
\ref{retractedgraded} we get

\begin{Lemma}
$C\otimes_{C^D}\gr_\Lambda M$ is negligible
\end{Lemma}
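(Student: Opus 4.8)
The plan is to invoke Proposition \ref{retractedgraded} directly. We have just established two facts: first, by Lemma \ref{cover} there is a graded $G$-equivariant surjection $C^D\to\gr_\Lambda A$, where $C$ is a good $G\Lambda$-algebra to which Proposition \ref{retractedgraded} applies; and second, that $\gr_\Lambda M$ is a noetherian $\gr_\Lambda A$-module. To apply Proposition \ref{retractedgraded} to $C\otimes_{C^D}\gr_\Lambda M$ I first need to check that this is in fact a $CG\Lambda$-module, that is, a noetherian $C$-module carrying a compatible $G$-action and a compatible $\Lambda$-grading. Compatibility of the $G$-action and of the $\Lambda$-grading is clear from the construction, since $\gr_\Lambda M$ is a $G$-equivariant $\Lambda$-graded $\gr_\Lambda A$-module and all the maps in sight respect these structures; $C\otimes_{C^D}\gr_\Lambda M$ inherits both. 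The point that needs the tacit counterpart of \cite[Theorem 16.9]{Grosshans book} for diagonalizable groups is that $C$ is a noetherian $C^D$-module (equivalently that $C^D$ is a noetherian ring and $C$ is module-finite over it, which for the diagonalizable $D$ follows from \cite{Borsari-Santos}, \cite[I.2.11]{Jantzen}); combined with the fact that $\gr_\Lambda M$ is noetherian over $\gr_\Lambda A$ and that $\gr_\Lambda A$ is a quotient of $C^D$, this gives that $C\otimes_{C^D}\gr_\Lambda M$ is noetherian over $C$.

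Once $C\otimes_{C^D}\gr_\Lambda M$ is seen to be a $CG\Lambda$-module, Proposition \ref{retractedgraded} immediately yields that it is negligible. The main (and really the only) obstacle is bookkeeping: making sure the module is genuinely finitely generated over $C$ and that the $\Lambda$-grading on the base-changed module is the contracted grading to which \ref{retractedgraded} refers, rather than some finer or coarser grading. Both are routine given the setup, so the proof is short.

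\begin{proof}
By Lemma \ref{cover} we have a graded $G$-equivariant surjection $C^D\to \gr_\Lambda A$ with $C$ a good $G\Lambda$-algebra. The module $\gr_\Lambda M$ is a noetherian $\gr_\Lambda A$-module, hence a noetherian $C^D$-module through the surjection. As $D$ is diagonalizable, $C$ is module-finite over $C^D$ (cf.\ \cite{Borsari-Santos}, \cite[I.2.11]{Jantzen}), so $C\otimes_{C^D}\gr_\Lambda M$ is noetherian over $C$. The $G$-action and the $\Lambda$-grading on $\gr_\Lambda M$ are compatible with those on $\gr_\Lambda A$, hence also with those on $C$ after base change, so $C\otimes_{C^D}\gr_\Lambda M$ is a \CGL-module. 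Now apply Proposition \ref{retractedgraded}.
\end{proof}
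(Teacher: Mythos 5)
Your approach is the same as the paper's: verify that $C\otimes_{C^D}\gr_\Lambda M$ is a \CGL-module and then apply Proposition \ref{retractedgraded}. However, the step where you assert that ``$C$ is module-finite over $C^D$'' is false in general, and the references \cite{Borsari-Santos}, \cite[I.2.11]{Jantzen} do not support it. Concretely, if $D$ is the kernel in $T$ of a single character $\lambda$, then $C^D$ occupies only the graded pieces whose weight is a multiple of $\lambda$, while $C$ has infinitely many nonzero graded pieces in other weights; $C$ is not finitely generated as a $C^D$-module (compare $\mathbb G_m$ acting by weight $1$ on $\k[x]$, where the invariants are just $\k$). What those references do give is that $C^D$ is a direct summand of $C$ as a $C^D$-module, and that $C^D$ is a finitely generated $\k$-algebra, but neither amounts to module-finiteness.

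Fortunately the false claim is also superfluous. You already observed that $\gr_\Lambda M$ is a noetherian $\gr_\Lambda A$-module, hence a finitely generated $C^D$-module via the surjection $C^D\to\gr_\Lambda A$ of Lemma \ref{cover}. Base change of finite generation is immediate: if $m_1,\dots,m_k$ generate $\gr_\Lambda M$ over $C^D$, then $1\otimes m_1,\dots,1\otimes m_k$ generate $C\otimes_{C^D}\gr_\Lambda M$ over $C$. Since $C$ is a noetherian ring, the tensor product is a noetherian $C$-module. The compatibility of the $G$-action and the contracted $\Lambda$-grading is as you say, and Proposition \ref{retractedgraded} applies. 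So your conclusion and overall route are correct; just drop the module-finiteness assertion and replace it by the elementary base-change observation. (The diagonalizable-group invariant theory is genuinely needed, but for the next lemma, where one uses that $\gr_\Lambda M=(C\otimes_{C^D}\gr_\Lambda M)^{1\times D}$ is a direct summand.)
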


Next we get 
 
\begin{Lemma}
The module $\gr_\Lambda M$ is negligible.
\end{Lemma}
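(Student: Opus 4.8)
The idea is to transfer negligibility from $C \otimes_{C^D} \gr_\Lambda M$ — known to be negligible by the previous Lemma — down to $\gr_\Lambda M$ itself, using that negligibility is detected by $\SL_N$-module structure alone and behaves well under the invariant-theoretic passage to $D$-invariants. First I would observe that $\gr_\Lambda M$ is recovered from $C \otimes_{C^D} \gr_\Lambda M$ as its $D$-invariants: since the graded map $C^D \to \gr_\Lambda A$ is surjective (Lemma \ref{cover}), we have $(C \otimes_{C^D} \gr_\Lambda M)^D = C^D \otimes_{C^D} \gr_\Lambda M = \gr_\Lambda M$, the first equality because $D$ is diagonalizable so taking $D$-invariants is exact and commutes with the tensor product over the ($D$-trivial) ring $C^D$. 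Here I use the counterpart for diagonalizable groups of \cite[Theorem 16.9]{Grosshans book} already invoked in the excerpt, together with \cite[I.2.11]{Jantzen}.

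Next I would use that $D$ commutes with the $\SL_N$-action, so $\gr_\Lambda M = (C \otimes_{C^D} \gr_\Lambda M)^D$ is an $\SL_N$-stable direct summand of $C \otimes_{C^D} \gr_\Lambda M$ as a $\k$-module — indeed, as an $\SL_N$-module, $C \otimes_{C^D} \gr_\Lambda M$ decomposes according to the $D$-weights and $\gr_\Lambda M$ is exactly the weight-zero piece. Since all the conditions in the definition of \emph{negligible} — vanishing of $H^i(\SL_N, - \otimes_\k \k[\SL_N/U])$ for $i \gg 0$ and vanishing of $H^1(\SL_N, - \otimes_\k \St_r \otimes_\k \St_r \otimes_\k \k[\SL_N/U])$ for $r \gg 0$ — are stated purely in terms of the $\SL_N$-module structure and cohomology is additive over direct sums, negligibility passes to $\SL_N$-module direct summands. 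Therefore $\gr_\Lambda M$ inherits negligibility from $C \otimes_{C^D} \gr_\Lambda M$, and in fact the same bounds on $i$ and $r$ work.

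The main obstacle here is not the cohomological bookkeeping but making sure the invariant-theoretic identification $(C \otimes_{C^D} \gr_\Lambda M)^D = \gr_\Lambda M$ is clean: one must check that $\gr_\Lambda M$ is indeed noetherian over $C^D$ so that the extension $C \otimes_{C^D} \gr_\Lambda M$ is an honest $CG\Lambda$-module (this is the content of the previous two Lemmas), and that forming $D$-invariants of a finitely generated module over $C$, viewed over the invariant subring $C^D$, recovers the original module — which holds because $D$ is linearly reductive (diagonalizable) and the coaction splits off the trivial isotypic component functorially. Once that is in place the argument is immediate, and I would phrase the proof in one or two sentences: take $D$-invariants of the negligible module $C \otimes_{C^D} \gr_\Lambda M$, note this is $\gr_\Lambda M$ and is an $\SL_N$-direct summand, and conclude.
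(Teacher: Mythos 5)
Your argument is correct and matches the paper's: both exhibit $\gr_\Lambda M$ as the $D$-invariant (equivalently, $D$-weight-zero) $G$-equivariant direct summand of the negligible module $C\otimes_{C^D}\gr_\Lambda M$, using that $D$ is diagonalizable so $C^D$ splits off $C$ as a $C^D$-module and hence the splitting persists after $\otimes_{C^D}\gr_\Lambda M$. The paper phrases the key step via the $G\times D$-module structure and cites \cite[I.2.11]{Jantzen} for the splitting, but the substance is identical.
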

\begin{proof}
Extend the $D$-action on $C$ to $C\otimes_{C^D}\gr_\Lambda M$ by using the trivial
action on the second factor. Then we have a $G\times D$-module structure
on $C\otimes_{C^D}\gr_\Lambda M$. As $D$ is diagonalizable,
$C^D$ is a direct summand of $C$ as a $C^D$-module \cite[I.2.11]{Jantzen}
and $(C\otimes_{C^D}\gr_\Lambda
M)^{1\times D}=\gr_\Lambda M$ is a direct summand of the $G$-module
$C\otimes_{C^D}\gr_\Lambda M$.
It follows that $\gr_\Lambda M$ is negligible.
\end{proof}

\subsection*{Proof of Proposition \ref{propnegligible}}
Fix $i$ and $r$  so big that $H^i(\SL_N,\gr M\otimes_\k \k[\SL_N/U])$ vanishes  and  
$H^1(\SL_N,\gr M\otimes_\k\St_r\otimes_\k\St_r\otimes_\k \k[\SL_N/U])$
vanishes. Enumerate the dominant weigths in
$\Lambda$ as
$\lambda_0$,  $\lambda_1$, \dots\ according to our total order on weights.
Note there are only finitely many dominant weights of given Grosshans height in $\Lambda$,
so that the order type of the set of dominant weights in $\Lambda$
is indeed just that of $\mathbb N$. (This would be false for the set of
dominant weights in $X(T)$.)
By induction on $\lambda$ we get vanishing of 
 $H^i(\SL_N, M_{\leq\lambda}\otimes_\k \k[\SL_N/U])$ and  
$H^1(\SL_N, M_{\leq\lambda}\otimes_\k\St_r\otimes_\k\St_r\otimes_\k \k[\SL_N/U])$ with the same the same $i$  and $r$.
As $G$-cohomology commutes with direct limits, $M$ is negligible.
\qed

\subsection*{Proof of Theorem \ref{maingood}}
A $\GL_N$-module has good Grosshans filtration if and only its restriction to $\SL_N$ has one.
One may embed $M$ into $M \otimes_\k\St_r\otimes_\k\St_r$
to start the resolution in Theorem \ref{maingood}. 
As the cokernel has a lower Grosshans filtration dimension, the Theorem follows.
\qed

\section{Consequences for earlier work}\label{consequences}
First let $\k$ be a noetherian ring containing a field $\F$ and let $G_\F$ be a geometrically reductive affine algebraic group
scheme over $\F$.
Write $G$ for the group scheme over $\k$ obtained by base change along $\F\to\k$.
Let $A$ be a finitely generated commutative $\k$-algebra on which $G$ acts
 rationally by $k$-algebra automorphisms. 
\begin{Theorem}[CFG when the base ring contains a field]\label{CFG:theorem}
\ \\
 $H^*(G,A)$ is a finitely generated $\k$-algebra.
\end{Theorem}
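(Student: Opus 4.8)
The plan is to reduce the statement to the CFG-type results already established for $\GL_N$ and $\SL_N$ via the familiar ``faithful flatness / extension of scalars'' device used in \cite{Srinivas vdK} and \cite{TvdK}, combined with the noetherian conclusions recorded in Corollary \ref{noetheriancohomology}. First I would embed $G_\F$ into some $\GL_{N,\F}$ as a closed subgroup scheme; geometric reductivity of $G_\F$ means that $\GL_N/G$ is affine (over $\F$, hence after base change over $\k$), so $\k[\GL_N]$ is, as a $G$-module, a direct limit of modules induced from $G$, and in particular the coordinate ring $A' := (A\otimes_\k\k[\GL_N])$ with the diagonal $G$-action and the residual $\GL_N$-action (by right translation on the second factor, say) is a finitely generated commutative $\k$-algebra on which $\GL_N$ acts rationally by algebra automorphisms, with $A'^{\,G}$ a finitely generated $\GL_N$-algebra whose invariants recover the picture we want. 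The key shift is that $H^*(G,A)$ becomes computable in terms of $H^*(\GL_N,-)$ applied to an induced coefficient algebra.

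Concretely the steps are: (i) realize $G$ as a closed subgroup of $\GL_{N,\k}$ defined over $\F$ and use geometric reductivity of $G_\F$ to see $\GL_N/G$ affine, hence $\ind_G^{\GL_N}A$ is again a finitely generated commutative $\k$-algebra $B$ with rational $\GL_N$-action; (ii) apply the generalized tensor identity / Shapiro-type isomorphism $H^*(G,A)\cong H^*(\GL_N,B)$ (this is where one uses that $\GL_N/G$ affine makes induction exact and cohomology-preserving, cf.\ the arguments in \cite[I.5.13, II.4]{Jantzen} adapted to our base ring, with the universal-coefficient reasoning of Remark \ref{universal coefficient reasoning} upgrading the field case); (iii) now $B$ is a finitely generated $\GL_N$-algebra over $\k$, but it need not have a good Grosshans filtration, so one passes to its Grosshans-graded algebra $\gr_\Lambda B$, which by Lemma \ref{Lambda Grosshans} is again finitely generated and which $does$ have good Grosshans filtration, apply Theorem \ref{maingood} (or directly Corollary \ref{noetheriancohomology}) to $\gr_\Lambda B$ regarded as a module over itself, obtaining that $H^i(\GL_N,\gr_\Lambda B)$ is a noetherian $(\gr_\Lambda B)^{\GL_N}$-module vanishing for $i\gg0$; (iv) run a spectral sequence / filtration-degeneration argument (the same bookkeeping as in the proof of Proposition \ref{propnegligible}, ordering dominant weights by the total order) to transport finite generation of $H^*(\GL_N,\gr_\Lambda B)$ back to $H^*(\GL_N,B)$, using that the associated-graded ring $\bigoplus_i H^*(\GL_N,\gr_i B)$ is noetherian and that $\k$ is noetherian so that filtered colimits of finitely generated modules behave well; (v) invoke invariant theory (\cite[Theorem 12, Theorem 9]{FvdK}) once more to conclude $H^*(G,A)=H^*(\GL_N,B)$ is finitely generated as a $\k$-algebra.

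The main obstacle I anticipate is step (iii)--(iv): passing from the Grosshans-graded algebra $\gr_\Lambda B$ back to $B$ itself at the level of the full cohomology \emph{ring} structure, not merely the modules $H^i$. For the noetherian-module statement of Corollary \ref{noetheriancohomology} the passage is routine (that is exactly the content of the Remark after Theorem \ref{maingood}), but finite generation of the ring $H^*(\GL_N,B)$ requires controlling the multiplicative structure across the Grosshans filtration: one needs that the Rees-type algebra associated to the filtration of $B$ is finitely generated and has good Grosshans filtration, so that Theorem \ref{maingood} applies to it and a standard Rees/associated-graded argument (as in \cite{TvdK}) then forces $H^*(\GL_N,B)$ to be a subquotient of a finitely generated $\k$-algebra. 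Because $\F$ is a field (not just any ring) we have the extra input that $B$ over $\F$ is already reasonably behaved, and the universal-coefficient theorem lets us descend the ring-finite-generation from the known field case of \cite{TvdK} fiberwise; assembling these over $\Spec(\k)$ with $\k$ noetherian is the delicate bookkeeping, but it follows the template of \cite{Srinivas vdK} closely. A secondary, more technical point is verifying that geometric reductivity of $G_\F$ (rather than linear reductivity) genuinely suffices to make $\ind_G^{\GL_N}$ behave, i.e.\ that $\GL_N/G$ is affine; this is Haboush/Nagata over $\F$ and is unaffected by base change to $\k$.
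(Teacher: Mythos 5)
Your step (iii) contains the fatal error. You claim that $\gr_\Lambda B$ has good Grosshans filtration, citing Lemma~\ref{Lambda Grosshans}; but that Lemma only gives finite generation of $\gr_\Lambda B$ (and an identification $\gr_\Lambda B\cong\gr B$ \emph{in the case when} $B$ already has good Grosshans filtration). Passing to the associated graded does not create good Grosshans filtration: $\gr V$ is already graded by Grosshans height, so $\gr(\gr V)=\gr V$ and $\hull_\nabla(\gr(\gr V))=\ind_{B^-}^G V^U=\hull_\nabla(\gr V)$; hence $\gr V$ (or $\gr_\Lambda V$) has good Grosshans filtration if and only if $V$ does. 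Thus Theorem~\ref{maingood} and Corollary~\ref{noetheriancohomology} simply do not apply to $\gr_\Lambda B$, and steps (iii)--(iv) collapse. The whole difficulty of (CFG) is precisely that $\gr B$ still lacks good Grosshans filtration; your plan treats this difficulty as already solved.

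What the paper does instead is the essential input you are missing. After the reduction to $G=\GL_N$ and to positive characteristic $p$, one invokes the Grosshans--Mathieu theorem (as recorded in \cite[Theorem 29, Proposition 41]{FvdK}) that $\gr A\to\hull_\nabla\gr A$ is $p$-power surjective. One then writes $\hull_\nabla\gr A$ as a quotient of $\k\otimes_{\F_p}R$ with $R$ a finitely generated $\F_p$-algebra with good filtration (using the algebra $C^D$ from Lemma~\ref{cover}), chooses $r$ so that $\gr A$ is noetherian over $\k\otimes_{\F_p}R^{(r)}$, and applies Friedlander--Suslin to the Frobenius kernel $G_r$: $H^*(G_r,\gr A)^{(-r)}$ is noetherian over a graded algebra with good filtration. \emph{Now} Theorem~\ref{maingood} and Corollary~\ref{noetheriancohomology} apply, but to the $G/G_r$-cohomology of $H^*(G_r,\gr A)$, not to $\gr A$ itself, giving finitely many nonzero $H^i(G/G_r,H^*(G_r,\gr A))$, each noetherian over $H^0$. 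The LHS spectral sequence and the bookkeeping of Touz\'e in \cite{TvdK} then finish. No amount of universal-coefficient reasoning or Rees-algebra manipulation in your steps (iv)--(v) can substitute for the Frobenius kernel input, because the obstruction is not a base-change issue: it is present already over a field.

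A secondary flaw: you invoke geometric reductivity of $G_\F$ and assert it ``is unaffected by base change to $\k$.'' The paper explicitly notes that geometric reductivity is not the right notion over a ring and works with power-reductivity (\cite{FvdK}); the affineness of $\GL_N/G$ over $\k$ is obtained by the base-change stability of affineness of the quotient, not by a base-change stability of geometric reductivity. Your step (i)--(ii), reducing to $\GL_N$ via $\ind_G^{\GL_N}$ and Shapiro, is otherwise aligned with the paper.
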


This is clear from \cite{TvdK} when $A$ is obtained by base change from an $\F$-algebra
with rational $G_\F$-action. 
Anyway, let us adapt the proof of \cite[Theorem 1.2]{TvdK}. First we will reduce to the case $G=\GL_N$.
Embed $G_\F$ in some $\GL_N$ over $\F$ and observe that the quotient $\GL_N/G_\F$ remains affine under base change
to $\k$, cf. \cite[I.5.5(1), I.5.4(5)]{Jantzen}. For group schemes over
 $\k$ geometric reductivity is no longer the right notion and 
we use power-reductivity \cite{FvdK} instead.

\begin{Lemma}
 Let $G$ be a  power-reductive flat affine algebraic group scheme over a ring $R$. 
For any commutative $R$-algebra $S$ the group scheme $G_S$ over $S$ is power-reductive.
\end{Lemma}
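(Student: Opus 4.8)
The plan is the following. Throughout I use the definition of \cite{FvdK}: $G$ is power-reductive over $R$ precisely when, for every surjection $\pi\colon M\to R$ of $G$-modules (the trivial module on the right), there is an integer $d\ge 1$ such that some symmetric power $S^{d}M$ contains a $G$-invariant that $\pi$ carries to $1$. Two elementary remarks will be used constantly. First, if such a $d$ works for $\pi$ then so does every multiple of $d$, since the $e$-th power of an invariant hitting $1$ is an invariant hitting $1$ in $S^{de}M$. Second, for a \emph{flat} ring map $R\to S$ the formation of $G$-invariants and of symmetric powers both commute with $-\otimes_{R}S$, so a surjection of $G_{S}$-modules $M\to S$ \emph{obtained by base change from a surjection over $R$} at once inherits the property from $G$ over $R$. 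The task is therefore to bring an arbitrary surjection $\phi\colon M\to S$ of $G_{S}$-modules — over which $M$, being only a subquotient of some $\mathcal{O}(G_{S})^{\oplus m}$, is \emph{not} base-changed — into such a position, or into a position where an associated graded can be brought to bear.

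The argument is a chain of reductions. Replacing $M$ by the $G_{S}$-submodule generated by a single preimage of $1$, we may assume $M$ finitely generated over $S$; writing $S$ as the filtered colimit of its finitely generated $R$-subalgebras and using that $\mathcal{O}(G)$ is of finite type over $R$, the usual finite-presentation arguments (\cite[IV\,\S8]{ega}) let one replace $S$ by such a subalgebra. So assume $S$ finitely generated over $R$ and write $S=P/I$ with $P=R[x_{1}]\otimes_{R}\cdots\otimes_{R}R[x_{n}]$. It now suffices to prove two things: (i) that power-reductivity over $R$ implies power-reductivity over $R[t]$ — iterating this along the tensor factors of $P$ then gives power-reductivity over $P$ — and (ii) that power-reductivity over $P$ implies power-reductivity over the quotient $S=P/I$. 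For (ii): given $\phi\colon M\twoheadrightarrow S$ of $G_{S}$-modules, form the $G_{P}$-module fibre product $K=M\times_{S}P$ (along $\phi$ and $P\to S$), which sits in $0\to I\to K\to M\to 0$ and whose projection to $P$ is a surjection of $G_{P}$-modules onto the \emph{free} trivial module $P$; after replacing $K$ by a finitely generated $G_{P}$-submodule still surjecting onto $P$ and applying power-reductivity over $P$, we get a $G$-invariant of $S^{d}_{P}K$ hitting $1\in P$, whose image under the $G$-equivariant surjection $S^{d}_{P}K\otimes_{P}S\twoheadrightarrow S^{d}_{S}M$ is a $G$-invariant of $S^{d}_{S}M$ that $S^{d}\phi$ carries to $1$ — using that the two $G$-maps $K\to S$, one through $M$ and $\phi$, the other through $P$, agree.

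It remains to prove (i), and here the natural tool is the same associated-graded manoeuvre that drives the body of this paper. Let $V$ be a finitely generated $G_{R[t]}$-module with $\phi\colon V\twoheadrightarrow R[t]$, and filter it by the $G$-submodules $t^{n}V$. The associated graded $\gr_{t}V=\bigoplus_{n\ge0}t^{n}V/t^{n+1}V$ is a finitely generated, non-negatively graded module over $\gr_{t}R[t]\cong R[t]$, and $\gr_{t}\phi$ is a graded surjection onto $R[t]$ whose $n$-th piece $t^{n}V/t^{n+1}V\twoheadrightarrow R$ is a surjection of $G_{R}$-modules \emph{over $R$}; power-reductivity over $R$ applies to each of these, with a single exponent $d$ valid for all $n$ (since $\gr_{t}V$ is finitely generated, multiplication by $t$ eventually identifies successive pieces, and an exponent working in one degree works in all higher degrees). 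As $\gr_{t}V$ is non-negatively graded, the degree-$0$ part of $S^{d}_{R[t]}(\gr_{t}V)$ is $S^{d}_{R}(V/tV)$, so power-reductivity over $R$ already furnishes a $G$-invariant of $S^{d}_{R[t]}(\gr_{t}V)$ hitting $1\in R[t]$. What then remains — and \emph{this is the main obstacle} — is to transfer this invariant from $\gr_{t}V$ back to $V$. Through the canonical $G$-equivariant surjection $S^{d}_{R[t]}(\gr_{t}V)\twoheadrightarrow\gr_{t}\big(S^{d}_{R[t]}V\big)$, which is compatible with the maps to $R[t]$, one is reduced to lifting a $G_{R}$-invariant of $S^{d}_{R[t]}V/tS^{d}_{R[t]}V$ hitting $1$ to an honest $G_{R[t]}$-invariant of some $S^{D}_{R[t]}V$ hitting $1$. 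The obstructions lie in $H^{1}(G_{R},-)$ of the successive quotients $t^{m}S^{d}_{R[t]}V/t^{m+1}S^{d}_{R[t]}V$ and are not formally zero; the remedy I would pursue is a successive-approximation argument along the $t$-adic filtration combined with the freedom to pass to higher symmetric powers, arranged so that the corrections sit in strictly deeper filtration steps and the construction terminates inside a single symmetric power. Granting this step and unwinding the reductions, $G_{S}$ is power-reductive over $S$.
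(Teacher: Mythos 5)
Your step (i) — passing from $R$ to $R[t]$ — is exactly where the argument breaks, and you are right to flag it as the main obstacle: the invariant you produce in $S^d_{R[t]}V$ by lifting from $\gr_t$ (or, more simply, by restricting $\phi$ along the augmentation $R[t]\to R$, $t\mapsto 0$, applying power-reductivity over $R$, and pushing $S^d_R V\to S^d_{R[t]}V$) only maps to something of the form $1+tg(t)$, which is a unit in $R[[t]]$ but not in $R[t]$. The ``successive approximation along the $t$-adic filtration'' you gesture at would have to terminate inside a single polynomial, and nothing in the set-up forces it to; raising to higher symmetric powers does not cancel the $t$-tail either. So this is a genuine gap, not a routine verification. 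A secondary problem is the very first reduction: over a general base one does \emph{not} know that the $G_S$-submodule generated by one element is finitely generated over $S$ (representations need not be locally finite — the paper says so explicitly in a later remark), so the reduction to $M$ finitely generated, and the subsequent replacement of $K$ by a finitely generated $G_P$-submodule, are not available. The definition of power-reductivity carries no finiteness hypothesis, so these reductions are in any case unnecessary; but as written they are unjustified.

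The paper avoids all of this by changing the test. By \cite[Proposition 10]{FvdK}, power-reductivity of $G_R$ is equivalent to property (Int): for every surjection $A\to B$ of $G_R$-algebras the induced map $A^{G}\to B^{G}$ is power-surjective. Property (Int) base-changes for free: a $G_S$-algebra surjection $A\to B$ is in particular a $G_R$-algebra surjection, and by \cite[Remark 52]{FvdK} restriction from $G_S$ to $G_R$ does not change invariants, so $A^{G_S}\to B^{G_S}$ equals $A^{G_R}\to B^{G_R}$, which is power-surjective because $G_R$ has (Int). Hence $G_S$ has (Int) and is therefore power-reductive. This sidesteps entirely the lifting problem that stops your step (i): (Int) is a statement about surjectivity of invariants, so changing the base ring over which ``algebra'' and ``symmetric power'' are read costs nothing, whereas the module-level definition forces you to compare $S^d_R$ with $S^d_S$, which is where you got stuck. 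If you want to keep a module-level argument, the right move is to pass through the symmetric $S$-algebra $\mathrm{Sym}_S(M)\to S$ and invoke (Int) there, rather than to climb the polynomial ring one variable at a time.
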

\begin{proof}
If $M$ is a module for $G_S$, then it is also a module for $G=G_R$ and with the same invariants \cite[Remark 52]{FvdK}.
By \cite[Proposition 10]{FvdK} $G_R$ has property (Int), which implies that $G_S$ has property (Int), hence is power
reductive. 
\end{proof}

\begin{Remark}
 The first line of the  proof of \cite[Proposition 10]{FvdK} ignores that \cite[Proposition 6]{FvdK}
refers to a finitely generated algebra. Indeed this finite generation hypothesis may safely be dropped,
as there is no finiteness hypothesis on the module in the definition of power reductivity. 
But we do not know that any algebra with $G$ action is a union of finitely generated invariant subalgebras.
That is because we simply do not know  if representations are always
locally finite. See \cite[Expos\'e VI, Remarque 11.10.1 in the 2011 edition]{SGA3}  for things that can go wrong
when $\k$ is not noetherian and $\k[G]$ is not a projective $\k$-module.
\end{Remark}

\subsection*{Proof of Theorem \ref{CFG:theorem}}
So we may argue as in \cite[Lemma 3.7]{reductive} that $H^*(G,A)=H^*(\GL_N,\ind_G^{\GL_N}(A))$, with
$\ind_G^{\GL_N}(A)=(A\otimes_\k \k[\GL_N])^G$ a finitely generated $\k$-algebra. 
This shows we may further assume $G=\GL_N$, an algebraic
group scheme over $\k$. We may assume the field $\F$ has positive characteristic $p$.
The map $\gr A\to \hull_\nabla \gr A$ is still $p$-power surjective by \cite[Theorem 29, Proposition 41]{FvdK}.
Write $\hull_\nabla \gr A$ as a quotient of an algebra $\k\otimes_{\F_p}R$, where $R$ is a finitely generated
$\F_p$-algebra with good filtration for $G_{\F_p}$, for instance by taking
for $\k\otimes_{\F_p}R$ the algebra $C^D$ in Lemma \ref{cover}. We
may choose $r$ so that $\gr A$ is a noetherian $\k\otimes_{\F_p}R^{(r)}$-module.
Then by Friedlander and Suslin, whose theorem \cite[Theorem 1.5, Remark 1.5.1]{Friedlander-Suslin} already has the
proper generality, we now know that $H^*(G_r,\gr A)^{(-r)}$ is a noetherian module over the graded algebra
$\bigotimes_{i=1}^rS_\k^*((\gl_n)^\#(2p^{i-1}))\otimes_{\F_p}R$. This graded algebra has good  filtration.
So our Theorem \ref{maingood} tells there are  only finitely many nonzero $H^i(G/G_r,H^*(G_r,\gr A))$ and they are 
all noetherian over $H^0(G/G_r,H^*(G_r,\gr A))$ by Corollary \ref{noetheriancohomology}. 
In view of \cite{FvdK} the proof of Touz\'e in \cite{TvdK} goes through.\qed

\begin{Remark}
 Let $G$ be a  flat affine algebraic group scheme over a ring $R$. Suppose $G_S$ satisfies (CFG) for some 
faithfully flat commutative $R$-algebra $S$. Then so does $G$. Therefore Theorem \ref{CFG:theorem} has 
consequences for some twisted families.
\end{Remark}

\section*{Reductive group schemes over a noetherian base ring.}
Let $\k$ be a noetherian ring and let $G$ be a reductive algebraic group scheme over $\Spec(\k)$, in the sense of SGA3, as always.
By \cite[Expos\'e XXII, Corollaire 2.3]{SGA3} the group scheme $G$ is locally split in the \'etale topology on $\Spec(\k)$.
Almost all the properties  we try to establish are fpqc local on $\Spec(\k)$, so in proofs we may and shall further 
assume $G$ is split. Note that we have only defined the Grosshans filtration when the group is split.

In view of what we just did for the case that $\k$ contains a field, we may
 replace $\Z$ with any noetherian ring $\k$ in section 6 of \cite{FvdK}.
Assume as always that the commutative algebra $A$ is finitely generated over the noetherian ring $\k$, 
with rational action on $A$ of $G$. When $G$ is split, we provide $A$ with the Grosshans filtration.
Further, let $M$ be a noetherian $A$-module with compatible $G$-action.
An abelian group $L$ has \emph{bounded torsion} if there is an $n\geq1$ with $nL_\tors=0$.
Summarizing section 6 of \cite{FvdK} we get
\begin{Theorem}[Provisional CFG]\label{bounded power fg:theorem}
We have
\begin{itemize}
\item Every $\h^m(G,M)$ is a noetherian $A^G$-module.
\item If $\h^*(G,A)$ is a finitely generated $\k$-algebra, then $\h^*(G,M)$ is a noetherian $\h^*(G,A)$-module.
\item If $G$ is split, then $\h^*(G,\gr A)$ is a finitely generated $\k$-algebra.
\item $\h^*(G,A)$ has bounded torsion if and only if it is a finitely generated $\k$-algebra. 
\item If\/ $\h^*(G,A)$ has bounded torsion, then the reduction $\h^\even(G,A)\to \h^\even(G,A/pA)$
is power-surjective for every prime number $p$.
\item If $ \h^\even(G,A/pA)$
is a noetherian $\h^\even(G,A)$-module for every prime number $p$, then $\h^*(G, A)$ is a finitely generated $\k$-algebra.
\end{itemize}\qed
\end{Theorem}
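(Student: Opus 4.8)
The plan is to reduce at once to the split case and then to observe that the statement is nothing but \cite[\S6]{FvdK} with the base ring $\Z$ replaced by our $\k$. First I would note that all six assertions are fpqc-local on $\Spec(\k)$ --- a point already made at the head of this section --- and that a reductive $G$ in the sense of SGA3 becomes split after an \'etale, hence fpqc, base change \cite[Expos\'e XXII]{SGA3}; so I may assume $G$ split throughout, which is also what makes $\gr A$ meaningful in the third bullet. What carries the whole thing is a single new input, supplied by the present paper: over $\k$, a noetherian $AG$-module with $A$ of good Grosshans filtration is negligible and has noetherian cohomology that vanishes in high degrees (Corollary \ref{noetheriancohomology}, Proposition \ref{propnegligible}). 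This is exactly what the arguments of \cite[\S6]{FvdK} extract from the main result of \cite{Srinivas vdK} over a field. All the other ingredients used there --- power-reductivity of the relevant groups and the ensuing invariant theory, finite generation of $A^U$ and of the Grosshans hull $\hull_\nabla(\gr A)=\ind_{B^-}^GA^U$, the $p$-power surjectivity of $\gr A\to\hull_\nabla(\gr A)$ in characteristic $p$ \cite[Theorem 29, Proposition 41]{FvdK}, and the Friedlander--Suslin finite generation theorem \cite[Theorem 1.5, Remark 1.5.1]{Friedlander-Suslin} --- are already formulated over an arbitrary noetherian base. For Dynkin types other than $A_{N-1}$ I would, where needed, reduce to $G=\GL_N$ by embedding the split $G$ in $\GL_N$, using that $\GL_N/G$ stays affine over $\k$ \cite[I.5.4, I.5.5]{Jantzen} and that $H^*(G,-)=H^*(\GL_N,\ind_G^{\GL_N}(-))$, with $\ind_G^{\GL_N}$ preserving finite generation of algebras and noetherianity of modules.

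For the first two bullets I would absorb $M$ into the algebra via the semidirect product $A\ltimes M$, filter by the Grosshans filtration, and use that $\gr A$ sits $p$-power-surjectively inside the $G$-acyclic finitely generated algebra $\hull_\nabla(\gr A)$ (finitely generated by \cite[Lemma 25, Lemma 46]{FvdK}); Corollary \ref{noetheriancohomology} applied to the hull and its noetherian modules, fed through the spectral sequence of the filtration together with noetherianity of $\k$, yields that each $\h^m(G,M)$ is noetherian over $A^G$ and, when $\h^*(G,A)$ is finitely generated, that $\h^*(G,M)$ is noetherian over it. The hard part is the third bullet, that $\h^*(G,\gr A)$ is a finitely generated $\k$-algebra: here I would rerun the proof of Theorem \ref{CFG:theorem}, but following $\gr A$ rather than $A$ and without assuming that $\k$ contains a field. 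Using $p$-power surjectivity, write $\hull_\nabla(\gr A)$ as a quotient of $\k\otimes_{\F_p}R$ with $R$ a finitely generated $\F_p$-algebra of good filtration --- one may take $C^D$ from Lemma \ref{cover} --- pick $r$ with $\gr A$ noetherian over $\k\otimes_{\F_p}R^{(r)}$, invoke Friedlander--Suslin to get $\h^*(G_r,\gr A)^{(-r)}$ noetherian over a good-filtration algebra built from $\bigotimes_i S_\k^*((\gl_N)^\#(2p^{i-1}))\otimes_{\F_p}R$, and then control the spectral sequence for the normal subgroup scheme $G_r$ of $G$ by Theorem \ref{maingood} and Corollary \ref{noetheriancohomology} applied to $G/G_r$. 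The transition between $\Z$-statements and $\k$-statements in this step is the universal coefficient reasoning of Remark \ref{universal coefficient reasoning}; in characteristic zero the claim degenerates to finite generation of $(\gr A)^G$.

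The remaining three bullets I expect to be formal. One direction of the fourth is immediate: $\h^*(G,A)$ is then a noetherian ring, so its $\Z$-torsion ideal is finitely generated and hence killed by a single integer. Conversely, with $\h^*(G,\gr A)$ already known to be finitely generated by the third bullet, I would compare $\h^*(G,A)$ to $\h^*(G,\gr A)$ through the Grosshans filtration and the $p$-power surjection and use that an algebra which is finitely generated up to $p$-power radicals and has bounded torsion is itself finitely generated. The power-surjectivity of $\h^\even(G,A)\to\h^\even(G,A/pA)$ under the bounded-torsion hypothesis, and the final criterion, then come from the long exact cohomology sequence of $0\to A\to A\to A/pA\to0$ (multiplication by $p$) together with noetherian induction on $\k$, exactly as in \cite[\S6]{FvdK}. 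So the real obstacle is the third bullet, and even that is mostly a matter of transporting the Friedlander--Suslin and Touz\'e machinery over $\k$ now that negligibility is available in that generality.
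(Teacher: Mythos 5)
Your proposal tracks the paper's own argument exactly: the paper proves Theorem \ref{bounded power fg:theorem} by declaring that, now that Corollary \ref{noetheriancohomology}, Proposition \ref{propnegligible} and Theorem \ref{CFG:theorem} are available over an arbitrary noetherian base, one may replace $\Z$ by $\k$ throughout Section 6 of \cite{FvdK} and read off the six assertions, after first reducing to the split case by \'etale (fpqc) descent. Your expanded sketch of what that reference contains --- the $A\ltimes M$ trick, the Grosshans filtration spectral sequence, the $p$-power comparison with the hull, Friedlander--Suslin together with the $G_r$-to-$G/G_r$ passage, and noetherian induction for the bounded-torsion bullets --- is consistent with it; the paper itself supplies none of these details beyond the pointer to \cite{FvdK}.
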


\begin{Remark}
 If $\k$ contains $\Q$ or  a finite ring then $\h^*(G,A)$ obviously has bounded torsion.
Also, if $H^i(G,A)$ vanishes for $i\gg0$ then $\h^*(G,A)$ has bounded torsion.
\end{Remark}

\begin{Remark}
The $\F_p$ vector space $H^1(\GGa,\F_p)$ is infinite dimensional, so  
 the hypothesis that $G$ is a reductive group scheme can not be deleted.
\end{Remark}

\end{document}